\newcommand{\balpha}{{\boldsymbol{\alpha}}}
\newcommand{\bbeta}{{\boldsymbol{\beta}}}
\newcommand{\be}{{\boldsymbol{e}}}
\newtheorem{theorem}{Theorem}[section]
\newtheorem{lemma}[theorem]{Lemma}
\theoremstyle{definition}
\theoremstyle{remark}
\newtheorem{remark}[theorem]{Remark}
\newtheorem{assumption}[theorem]{Assumption}
\numberwithin{equation}{section}
\begin{document}

% \title[short text for running head]{full title}
\title[$H^m$ Nonconforming FEM Spaces When $m=n+1$]{Nonconforming
  Finite Element Spaces for \\ $2m$-th Order Partial Differential
    Equations on $\mathbb{R}^n$ Simplicial Grids when $m=n+1$}
\thanks{
This paper is dedicated to the memory of Professor Ming Wang.  This
study was partially supported by DOE Grant DE-SC0009249 as part of the
Collaboratory on Mathematics for Mesoscopic Modeling of Materials and
by the NSF grant DMS-1522615.}

% author one information
%\author[]{}

% Only \author and \address are required; other information is
% optional.  Remove any unused author tags.

\author{Shuonan Wu}
\address{Department of Mathematics, Pennsylvania State University,
  University Park, PA 16802, USA}
\curraddr{}
\email{sxw58@psu.edu}

%    author two information
\author{Jinchao Xu}
\address{Department of Mathematics, Pennsylvania State University,
  University Park, PA 16802, USA}
\curraddr{}
\email{xu@math.psu.edu}

%    \subjclass is required.
\subjclass[2010] {65N30, 65N12}
\keywords{Finite element method, nonconforming, $2m$-th order elliptic
problem, tri-harmonic problem}

\date{}

%\dedicatory{In Memory of Prof. Ming Wang}

% Abstract is required.
\begin{abstract}
In this paper, we propose a family of nonconforming finite elements
for $2m$-th order partial differential equations in $\mathbb{R}^n$ on
simplicial grids when $m=n+1$. This family of nonconforming elements
naturally extends the elements proposed by Wang and Xu [Math.
Comp. 82(2013), pp. 25--43] , where $m \leq n$ is required. We prove
the unisolvent property by induction on the dimensions using the
similarity properties of both shape function spaces and degrees of
freedom.  The proposed elements have approximability, pass the
generalized patch test and hence converge.  We also establish quasi-optimal
error estimates in the broken $H^3$ norm for the 2D nonconforming
element. In addition, we propose an $H^3$ nonconforming finite element
that is robust for the sixth order singularly perturbed problems in
2D. These theoretical results are further validated by the numerical
tests for the 2D tri-harmonic problem.
\end{abstract}

\maketitle

%%%%%%%%%%%%%%%%%%%%%%%%%%%%%%%%%%%%%%%%%%%%%%%%%%%%%%%%%%%%
%% Section 1: Introduction 
%%%%%%%%%%%%%%%%%%%%%%%%%%%%%%%%%%%%%%%%%%%%%%%%%%%%%%%%%%%%
\section{Introduction} \label{sec:intro}

In \cite{wang2013minimal}, Wang and Xu proposed a family of
nonconforming finite elements for $2m$-th order elliptic partial
differential equations in $\mathbb{R}^n$ on simplicial grids, with the
requirement that $m \leq n$. These elements (named Morley-Wang-Xu or
MWX elements) are simple and elegant when compared to the conforming
finite elements, with a combination of simplicial geometry, polynomial
space, and convergence analysis. For example, in 3D, the minimal
polynomial degrees of the $H^2$ and $H^3$ conforming finite elements
are $9$ and $17$, respectively (cf.  \cite{alfeld1991structure,
zhang2009family}), while those of MWX elements are only $2$ and
$3$, respectively. In consideration of the desired properties of the
MWX elements, can we extend the MWX elements to the case in which $m >
n$?  In this paper, we partially answer this question by constructing
a family of nonconforming finite elements when $m=n+1$. 

%% Conforming element 
%% 1. straight forward, but different to construct on the simplex 
%% 2. result by zarmal and Bramble on triangular mesh 
%% 3. result by Zhang on the tehderohydron --- difficult to extend to
%%    high dimension 
%% 4. Some result on the rectangular grid 
Conforming finite element spaces for $2m$-th order partial
differential equations in $\mathbb{R}^n$ would require $C^{m-1}$
continuity, which could lead to an extremely complicated construction
when $m \geq 2$ or $n \geq 2$. In 2D, the minimal degree of conforming
finite element is $5$, which refers to the well-known Argyris elements (cf.
\cite{ciarlet1978finite}). In \cite{vzenivsek1970interpolation},
{\v{Z}}en{\'\i}{\v{s}}ek constructed the $H^3$ conforming finite
element on the 2D triangular grids.  The construction was further studied
in \cite{bramble1970triangular} for the $H^m$ conforming finite
elements for arbitrary $m\geq 1$ in 2D, which requires a polynomial
of degree $4m-3$. Moreover, the construction and implementation of
conforming finite elements are increasingly daunting with the growth
of dimension $n$.  In fact, the conforming finite elements in 3D, as
far as the authors are aware, have only been implemented when $m \leq
2$ (cf.  \cite{zhang2009family}). An alternative is the conforming
finite elements on rectangular grids for arbitrary $m$ and $n$ (see
\cite{hu2015minimal}).

%% Our contribution: nonconforming, unified construction when $m=n+1$ 
%% 1. We the number of degree of freedom ... smallest!
%% 2. Bilinear form is straightforward!
%% 3. Consistency with the 1D Hermit element 
For the construction of nonconforming finite elements, to remove the
restriction $m \leq n$ is also a daunting
challenge.  In \cite{hu2016canonical}, Hu and Zhang used the full
$\mathcal{P}_{2m-3}$ polynomial space to construct $H^m$
nonconforming finite elements in 2D when $m \geq 4$. For $m=3$ and
$n=2$, they applied the full $\mathcal{P}_4$ polynomial space. In this
paper, we present a universal construction for $H^m$ nonconforming
finite elements when $m=n+1$. The shape function space in this family,
denoted as $P_T^{(n+1,n)}$ on simplex $T$, is defined by
the $\mathcal{P}_{n+1}$ polynomial space enriched by the
$\mathcal{P}_{n+2}$ volume bubble function. With carefully designed
degrees of freedom, we prove the unisolvent property by the
similarities of both shape function spaces and degrees of freedom (see
Lemma \ref{lem:similarity-space}).  Note that for $m=3$ and $n=2$, the
number of local degrees of freedom is $12$ in our element, which is
three less than the element given by Hu and Zhang in
\cite{hu2016canonical}.

%% Singular perturbed 
The shape function space of $H^3$ nonconforming element in 2D is the
same as the second type of nonconforming element $\tilde{W}_h(T)$
proposed in \cite{nilssen2001robust}, where the authors focused on the
construction of robust nonconforming elements for singularly perturbed
fourth order problems.  The set of degrees of freedom of the proposed
2D element, however, is different from that of $\tilde{W}_h(T)$.  The
extensions of the robust $H^2$ nonconforming elements included
\cite{tai2006discrete} for 3D low-order case, and
\cite{guzman2012family} for the arbitrary polynomial degree.  The
proposed $H^3$ nonconforming finite element space in 2D is $H^1$
conforming and thus is suitable for second order elliptic problems.
Further, by adding additional bubble functions to the shape function
space, a modified $H^3$ nonconforming element can handle both second
and fourth order elliptic problems and thus is robust for the sixth
order singularly perturbed problems in 2D (see Section
\ref{sec:perturbed-2D}).

%% Application of the H^3 element in 2D 
While the construction presented in this paper is mainly motivated by
theoretical considerations, the new family of elements can also be
applied to several practical problems. For instance, the nonconforming
finite element when $m=3$ and $n=2$ can be applied to many
mathematical models, including the thin-film equations (cf.
\cite{barrett2004finite}), the phase field crystal model (cf.
\cite{backofen2007nucleation, cheng2008efficient, wise2009energy,
hu2009stable, wang2011energy}), the Willmore flows (cf.
\cite{du2004phase, du2005phase}) and the functionalized Cahn-Hilliard
model (cf. \cite{dai2013geometric, doelman2014meander}).

%% Other method C^0-DG, mixed method
%% Still different to extend to the high dimensional case 
In addition to conforming and nonconforming finite element methods,
the other types of discretization methods for $2m$-th order partial
differential equations may also be feasible. In
\cite{gudi2011interior}, Gudi and Neilan proposed a $C^0$-IPDG method
and a $C^1$-IPDG method for the sixth order elliptic equations on the
2D polygonal domain. These methods, in the framework of discontinuous
Galerkin methods, can be easily implemented, while the
discrete variational forms need to be carefully designed by
introducing certain penalty terms on the element interfaces. Further,
even though the DG methods may not as constrained by
matching dimension to the order of equation, the complexity of the
penalty terms should also be studied with the growth of dimension $n$.
A family of $\mathcal{P}_m$ interior nonconforming finite
element methods in $\mathbb{R}^n$ was proposed in \cite{wu2017pm}
aiming to balance the weak continuity and the complexity of the
penalty terms. Mixed methods are also feasible for high order elliptic
equations, see \cite{li1999full, li2006optimal} for fourth order
equations, \cite{droniou2017mixed} for sixth order equations, and
\cite{schedensack2016new} for 2D $m$th-Laplace equations based on the
Helmholtz decompositions for tensor-valued functions.

%% rest of paper
The rest of the paper is organized as follows. In Section
\ref{sec:spaces}, we provide a detailed description of our family of
$H^m$ nonconforming finite elements when $m=n+1$.  In Section
\ref{sec:error-estimate}, we state and prove the convergence of the
proposed nonconforming finite elements.  Further, we show a quasi-optimal
error estimate under the conforming relatives assumption.
In Section \ref{sec:perturbed-2D}, we propose an $H^3$
nonconforming finite element that is robust for the sixth order
singularly perturbed problems in 2D. Numerical tests are provided in
Section \ref{sec:tests} to support the theoretical findings, and some
concluding remarks are given in Section \ref{sec:concluding}.  
%%

%%%%%%%%%%%%%%%%%%%%%%%%%%%%%%%%%%%%%%%%%%%%%%%%%%%%%%%%%%%%
%% Section 2: Nonconforming finite element spaces 
%%%%%%%%%%%%%%%%%%%%%%%%%%%%%%%%%%%%%%%%%%%%%%%%%%%%%%%%%%%%
\section{Nonconforming Finite Element Spaces} \label{sec:spaces}
In this section, we shall construct universal nonconforming finite
elements of $H^{m}(\Omega)$ for $\Omega \subset \mathbb{R}^n$ with
$m=n+1, n \geq 1$. Here, we assume that $\Omega$ is a bounded
polyhedron domain of $\mathbb{R}^n$.

%\subsection{Preliminary}

Throughout this paper, we use the standard notation for the usual
Sobolev spaces as in \cite{ciarlet1978finite,
brenner2007mathematical}.  For an $n$-dimensional multi-index $\balpha
= (\alpha_1, \cdots, \alpha_n)$, we define 
$$ 
|\balpha| := \sum_{i=1}^n \alpha_i, \quad 
\partial^\balpha := \frac{\partial^{|\balpha|}}{\partial x_1^{\alpha_1}
\cdots \partial x_n^{\alpha_n}}.
$$ 
Given an integer $k\geq 0$ and a bounded domain $G \subset
\mathbb{R}^n$ with boundary $\partial G$, let $H^k(G), H_0^k(G),
\|\cdot\|_{k,G}$, and $|\cdot|_{k,G}$ denote the usual Sobolev spaces,
norm, and semi-norm, respectively. 

Let $\mathcal{T}_h$ be a conforming and shape-regular simplicial
triangulation of $\Omega$ and $\mathcal{F}_h$ be the set of all
faces of $\mathcal{T}_h$. Let $\mathcal{F}_h^i := \mathcal{F}_h
\setminus \partial \Omega$ and $\mathcal{F}_h^\partial := \mathcal{F}_h
\cap \partial\Omega$.  Here, $h := \max_{T\in \mathcal{T}_h}$,
and $h_T$ is the diameter of $T$ (cf. \cite{ciarlet1978finite,
    brenner2007mathematical}).  We assume that $\mathcal{T}_h$ is
quasi-uniform, namely 
$$    
\exists \eta>0 \mbox{~~such that~~} \max_{T\in \mathcal{T}_h}
\frac{h}{h_T} \leq \eta,
$$
where $\eta$ is a constant independent of $h$. Based on the
triangulation $\mathcal{T}_h$, for $v \in L^2(\Omega)$ with
$v|_T \in H^k(T), \forall T\in \mathcal{T}_h$, we define
$\partial^{\balpha}_h v$ as the piecewise partial derivatives of $v$
when $|\balpha| \leq k$, and
$$ 
\|v\|_{k,h}^2 := \sum_{T \in \mathcal{T}_h}\|v\|_{k,T}^2, \quad
|v|_{k,h}^2 := \sum_{T\in \mathcal{T}_h} |v|_{k,T}^2.
$$ 

For convenience, we use $C$ to denote a generic positive
constant that may stand for different values at its different
occurrences but is independent of the mesh size $h$. The notation $X
\lesssim Y$ means $X \leq CY$.

%% Finite element space
\subsection{The $H^m$ nonconforming finite elements when $m=n+1$}

Following the description of \cite{ciarlet1978finite,
brenner2007mathematical}, a finite element can be represented by a
triple $(T, P_T, D_T)$, where $T$ is the geometric shape of the
element, $P_T$ is the shape function space, and $D_T$ is the set of
the degrees of freedom that is $P_T$-unisolvent. 

Let $T$ be an $n$-simplex.  Given an $n$-simplex $T$ with
vertices $a_i, 1\leq i \leq n+1$, let
$\lambda_1, \lambda_2, \cdots, \lambda_{n+1}$ be the barycenter
coordinates of $T$. For $1\leq k \leq n$, let $\mathcal{F}_{T,k}$ be
the set consisting of all $(n-k)$-dimension sub-simplexes of $T$.  For
any $F \in \mathcal{F}_{T,k}$, let $|F|$ denote its
$(n-k)$-dimensional measure, and $\nu_{F,1}, \cdots, \nu_{F,k}$ be
linearly independent unit vectors that are orthogonal to the tangent
space of $F$. Specifically, $F$ represents a vertex and $|F| = 1$ when
$k=n$. 

For any simplex $K$, let $q_K$ be the volume bubble function of the
simplex $K$. Specifically, we have  
$$ 
q_T = \lambda_1\lambda_2 \cdots \lambda_{n+1}.
$$ 
The shape function space $P_T = P_T^{(m,n)}$ when $m=n+1$ is defined
as 
\begin{equation} \label{equ:shape-function} 
P_T^{(n+1,n)} := \mathcal{P}_{n+1}(T) + q_T\mathcal{P}_1(T),
\end{equation}
where $\mathcal{P}_k(T)$ denotes the space of all polynomials defined
on $T$ with a degree not greater than $k$, for any integer $k \geq 0$.

For $k\geq 1$, let $A_k$ be the set consisting of all multi-indexes
$\balpha$ with $\sum_{i=k+1}^n \alpha_i = 0$.  For $1\leq k \leq n$,
any $(n-k)$-dimensional sub-simplex $F\in \mathcal{F}_{T,k}$ and
$\balpha\in A_k$ with $|\balpha|=n+1-k$, we define 
\begin{equation} \label{equ:DOF-1}
d_{T,F,\balpha}(v) := \frac{1}{|F|}\int_F
\frac{\partial^{n+1-k}v}{\partial\nu_{F,1}^{\alpha_1}
\cdots \partial\nu_{F,k}^{\alpha_k}} \qquad \forall v\in H^{n+1}(\Omega).
\end{equation}
When $|\balpha|=0$, we define 
\begin{equation} \label{equ:DOF-2}
d_{T,a_i,0}(v) := v(a_i) \qquad \forall v \in H^{n+1}(\Omega).
\end{equation}
By the Sobolev embedding theorem (cf. \cite{adams2003sobolev}),
$d_{T,F,\balpha}$ and $d_{T,a_i,0}$ are continuous linear functionals
on $H^{n+1}(T)$. Then, the set of the degrees of freedom is 
\begin{equation} \label{equ:DOF}
\begin{aligned}
D_T^{(n+1,n)} :=~ & \{d_{T,F,\balpha}~:~ \balpha\in A_k \mbox{~with~}
|\balpha|=n+1-k, F\in \mathcal{F}_{T,k}, 1\leq k \leq n\} \\
         \cup~& \{d_{T,a_i,0}~:~ 1 \leq i \leq n+1\}.
\end{aligned}
\end{equation}
We also number the local degrees of freedom by 
$$ 
d_{T,1}, d_{T,2}, \cdots, d_{T,J},
$$ 
where $J$ is the number of local degrees of freedom.

As a natural extension of MWX elements proposed in
\cite{wang2013minimal}, the diagrams of the finite elements for the
case in which $m \leq n+1$ are plotted in Table \ref{tab:m=n+1}.

\begin{table}[!htbp]
\caption{$m \leq n+1$: diagrams of the finite elements.}
\begin{center}
\begin{tabular}{>{\centering\arraybackslash}m{.6cm} |
>{\centering\arraybackslash}m{2.8cm} |
>{\centering\arraybackslash}m{2.8cm} |
>{\centering\arraybackslash}m{2.8cm} @{}m{0pt}@{} }
\hline
$m \backslash n$ & 1 & 2 & 3 \\ \hline
1 &
\includegraphics[width=1.1in]{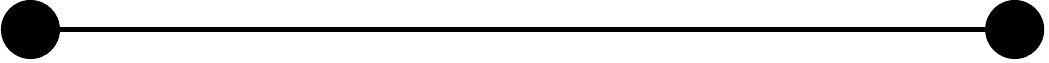}
& 
\includegraphics[width=1.1in]{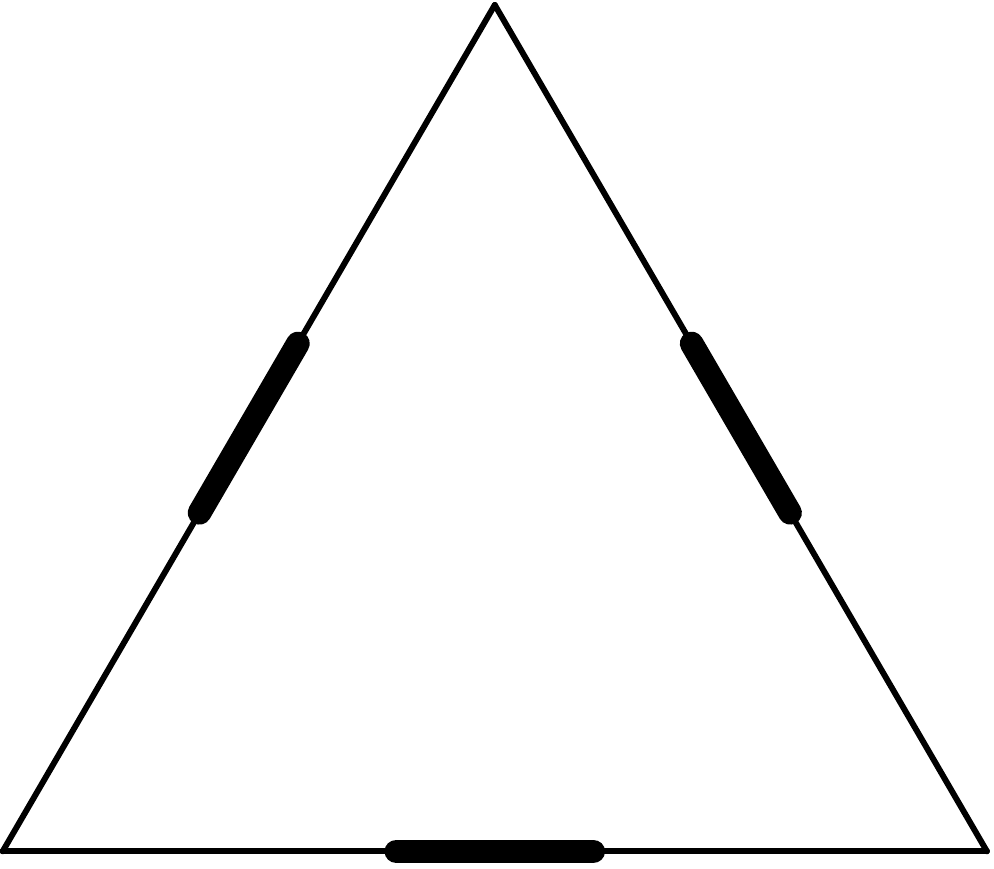}
& 
\includegraphics[width=1.1in]{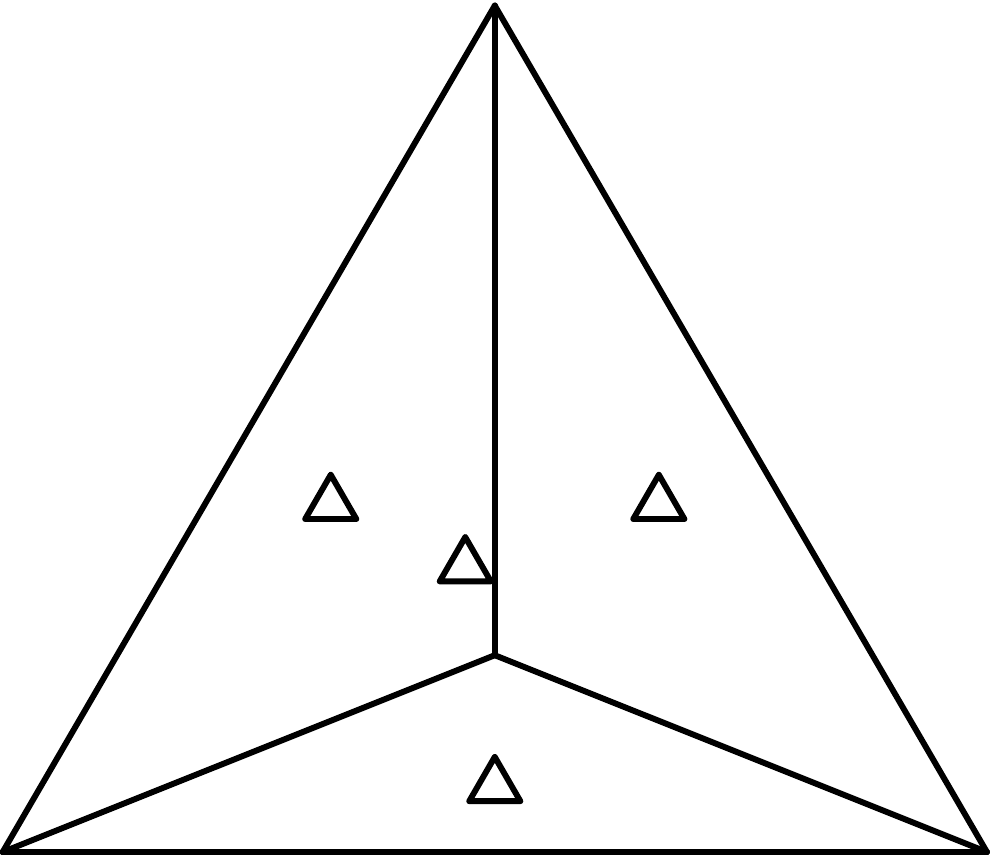} \\ 
\hline 
2 & 
\includegraphics[width=1.1in]{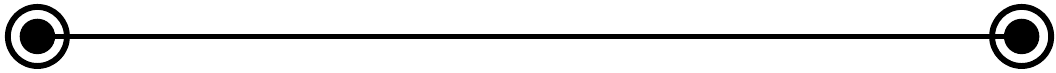}
& 
%\begin{framed} 
\includegraphics[width=1.1in]{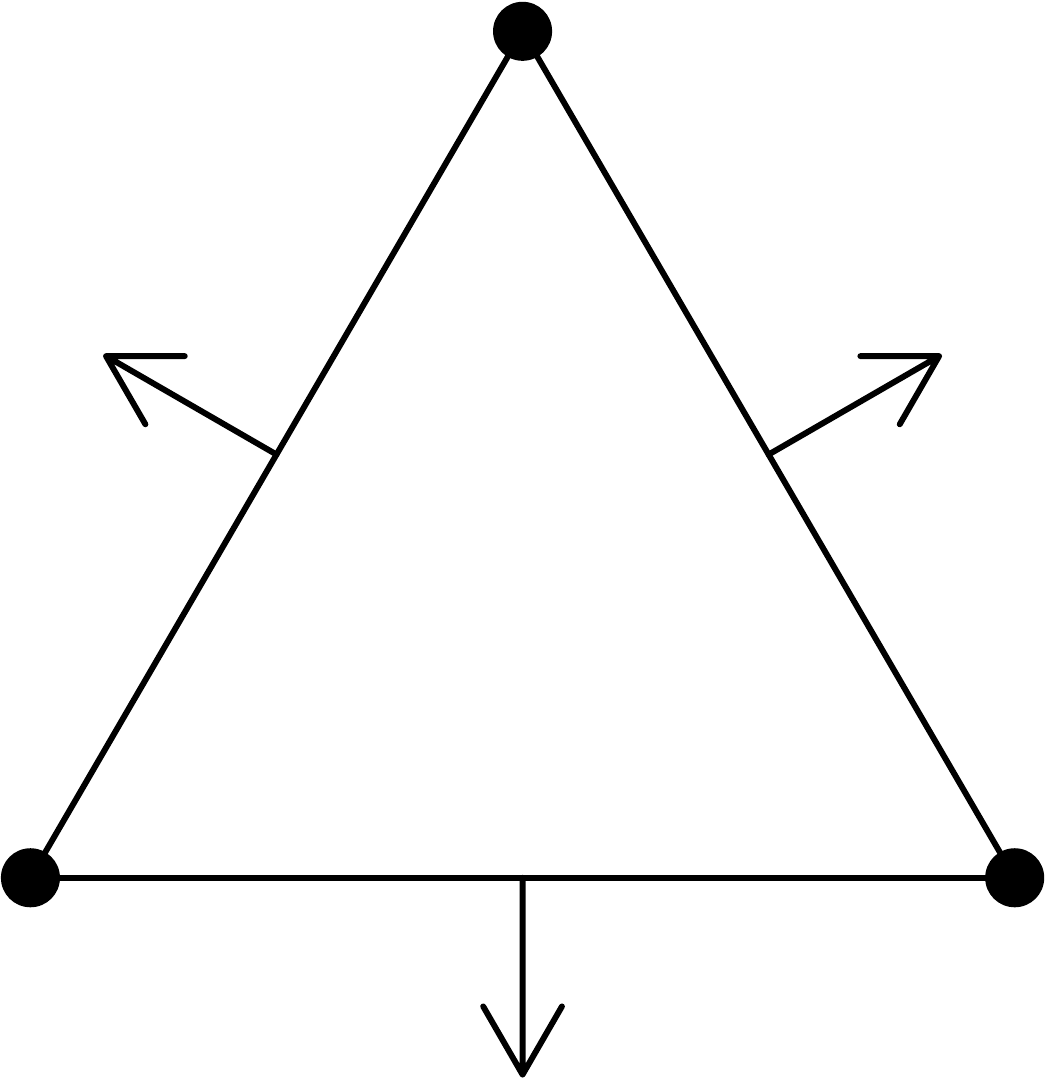}
%\end{framed}
& 
\includegraphics[width=1.1in]{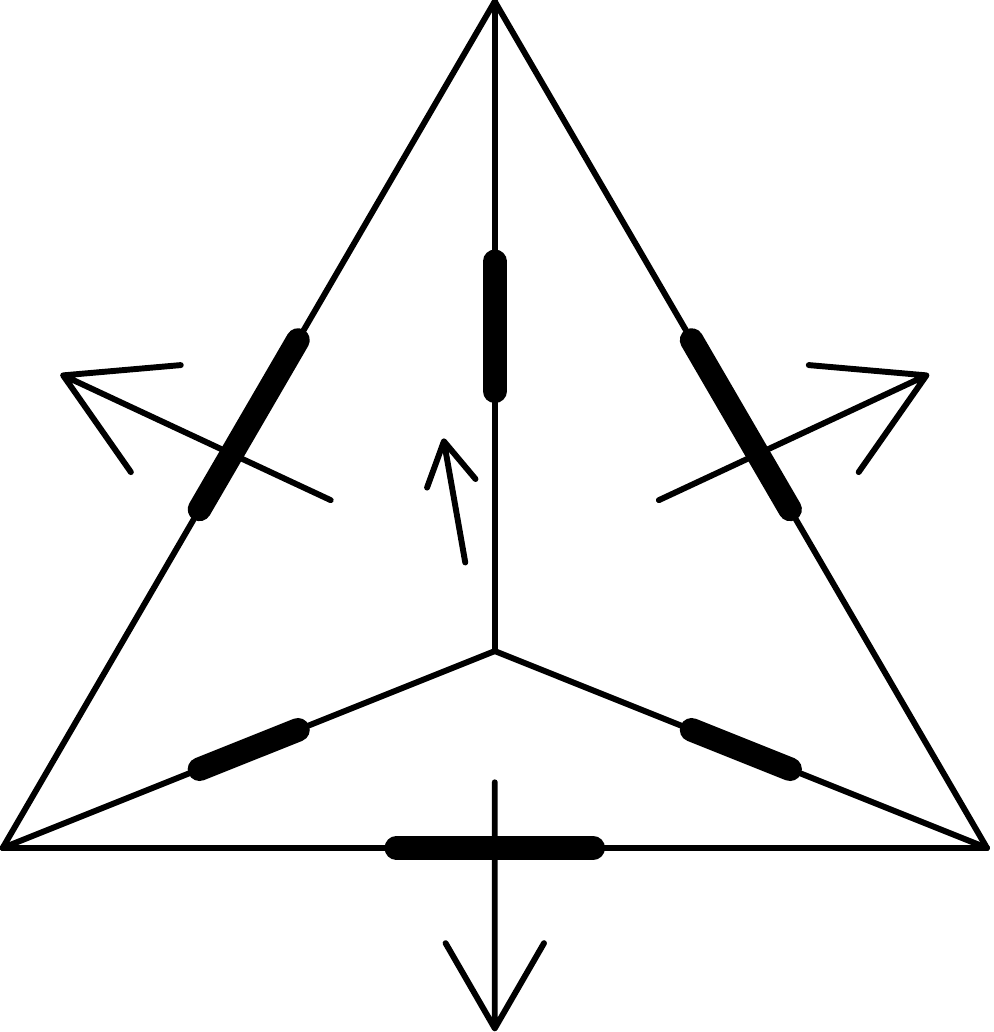}
\\ \hline 
3 & 
&
\includegraphics[width=1.1in]{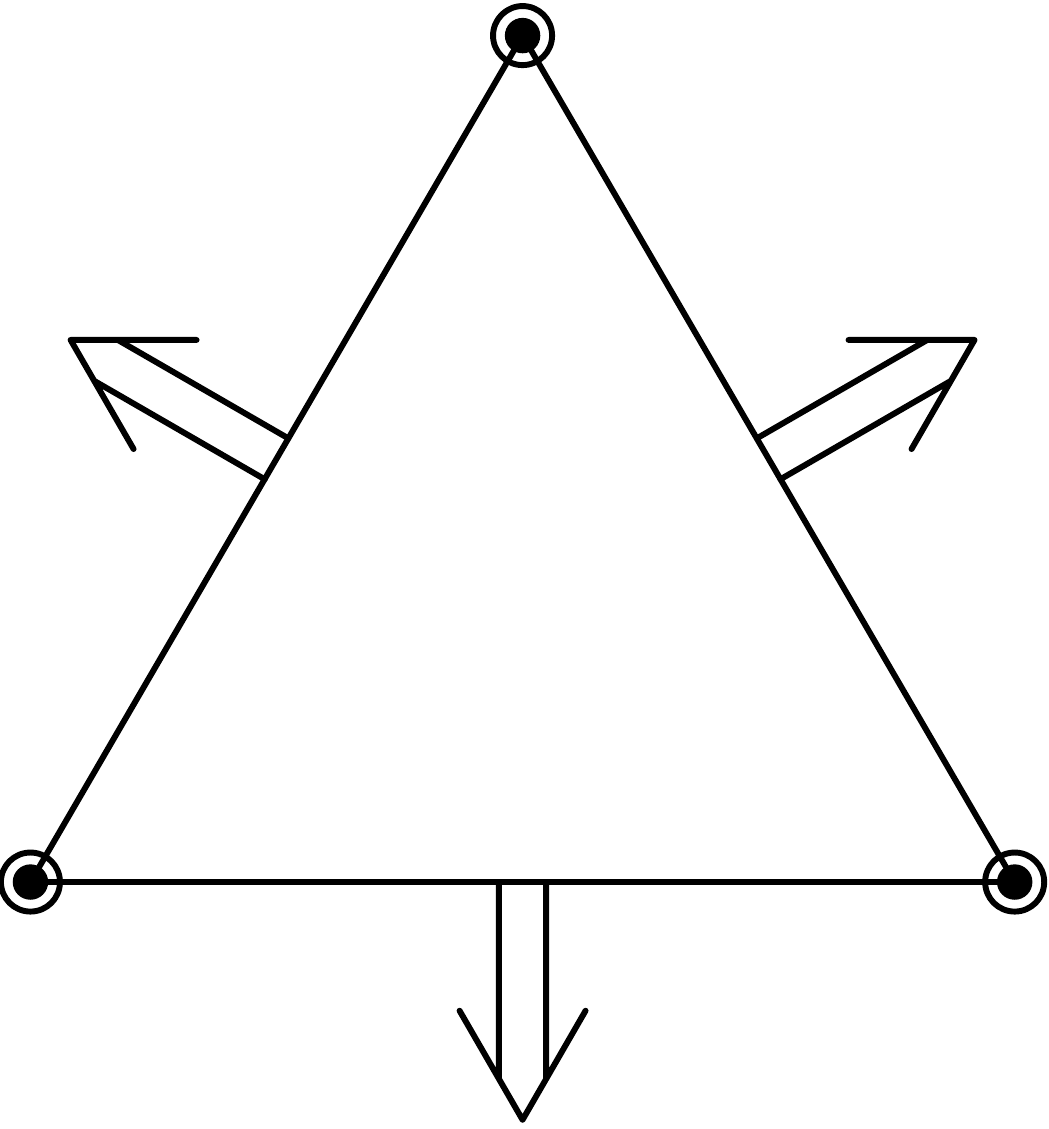}
&
\includegraphics[width=1.1in]{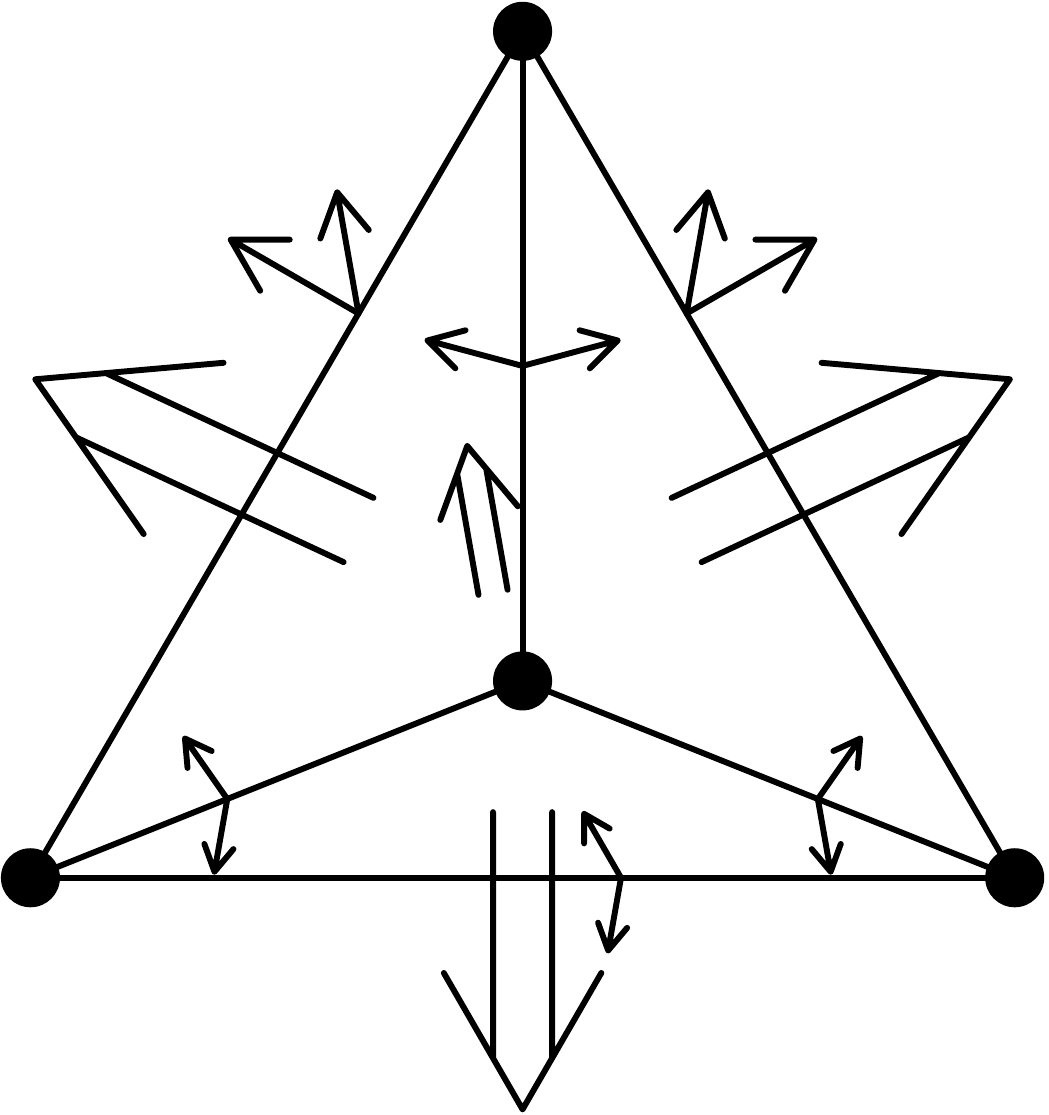}
\\ \hline
4 & 
&
&
\includegraphics[width=1.1in]{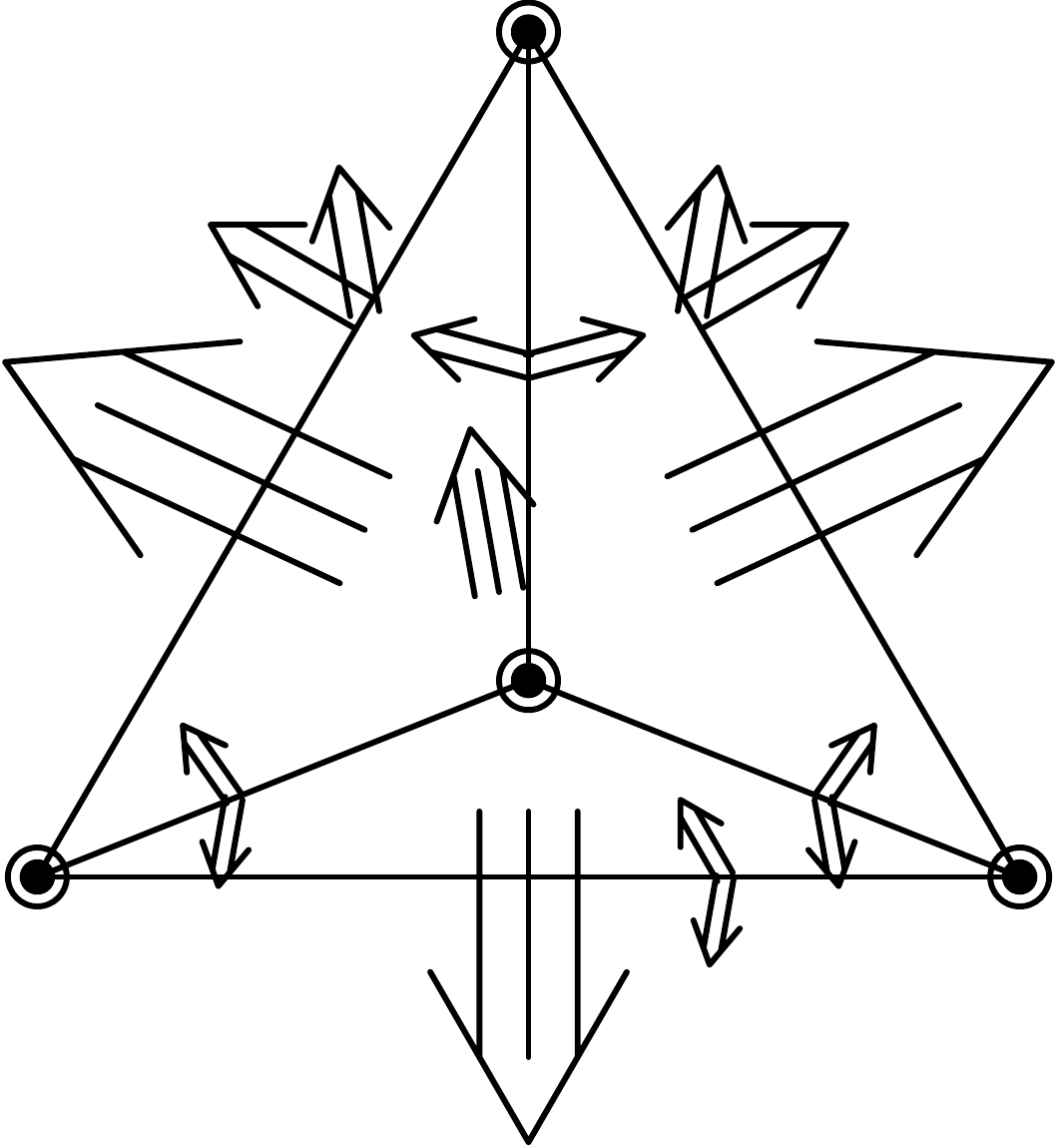}
\\ \hline 
\end{tabular}
\label{tab:m=n+1}
\end{center}
\end{table}

By the Vandermonde combinatorial identity, the number of local degrees
of freedom defined in \eqref{equ:DOF-1} is 
$$ 
\sum_{k=1}^n C_{n+1}^{n-k+1}C_{n}^{n+1-k} = C_{2n+1}^n - 1,
$$ 
where the combinatorial number $C_j^i = \frac{j!}{i!(j-i)!}$ for $j
\geq i$ and $C_j^i = 0$ for $j < i$. Therefore, the number of local 
degrees of freedom defined in \eqref{equ:DOF} is 
\begin{equation} \label{equ:num-DOF}
J = (C_{2n+1}^n - 1) + (n+1) = C_{2n+1}^n + n.
\end{equation}

On the other hand, it is straightforward that 
$$ 
\mathcal{P}_{n+1}(T) \cap q_T\mathcal{P}_1(T) = \mathrm{span} \{q_T\}.
$$ 
Hence, the dimension of $P_T^{(n+1,n)}$ defined in
\eqref{equ:shape-function} is given by 
\begin{equation} \label{equ:dim-PT}
\begin{aligned}
\dim \mathcal{P}_T^{(n+1,n)} &= \dim\mathcal{P}_{n+1}(T) + \dim
(q_T\mathcal{P}_1(T)) - \dim(\mathcal{P}_{n+1}(T)\cap
    q_T\mathcal{P}_1(T)) \\
&= C_{2n+1}^n + n,
\end{aligned}
\end{equation}
which is exactly the number of local degrees of freedom calculated in
\eqref{equ:num-DOF}.

%% Unisolvent %%
\subsection{Unisolvent property of the new nonconforming finite
elements}  In this section, we shall present a proof for the
unisolvent property of the proposed nonconforming finite elements.
This technique can be applied to the all the cases in which $m \leq
n+1$, while only the $m=n+1$ case is presented for simplicity. 

\begin{lemma} \label{lem:vanish-derivatives}
If all the degrees of freedom defined in \eqref{equ:DOF-1} vanish,
then for $0\leq k \leq n$, any $(n-k)$-dimensional sub-simplex $F\in
\mathcal{F}_{T,k}$, we have  
\begin{equation} \label{equ:m-derivatives} 
\frac{1}{|F|}\int_F \nabla^{n+1-k} v = 0,
\end{equation} 
where $\nabla^{l}$ is the $l$-th Hessian tensor for any
integer $l\geq 0$.
\end{lemma}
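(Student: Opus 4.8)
The plan is to prove this by downward induction on $k$, starting from the top-dimensional case $k=0$ (where $F=T$) and decreasing to $k=n$ (where $F$ is a vertex). The hypothesis gives us vanishing of the directional-derivative degrees of freedom in \eqref{equ:DOF-1}, which involve derivatives \emph{only} in the normal directions $\nu_{F,1},\dots,\nu_{F,k}$ of each face $F$; the claim \eqref{equ:m-derivatives} asserts that in fact the \emph{full} Hessian tensor $\nabla^{n+1-k}v$ has vanishing average over $F$, i.e.\ every partial derivative of order $n+1-k$ averages to zero on $F$, not just the normal ones. The key observation that bridges this gap is that derivatives purely tangential to $F$ can be recovered from lower-order data on $F$ by integration by parts / the divergence theorem on the sub-simplex $F$, while mixed tangential-normal derivatives are the quantities one must account for carefully.

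First I would set up the induction. For the base case $k=0$ we have $F=T$, the normal space is trivial, and the claim reads $\frac{1}{|T|}\int_T \nabla^{n+1}v = 0$. Since $v\in P_T^{(n+1,n)} = \mathcal{P}_{n+1}(T)+q_T\mathcal{P}_1(T)$, the $(n+1)$-st Hessian $\nabla^{n+1}v$ is a constant tensor coming from the $\mathcal{P}_{n+1}$ part plus a contribution from the degree-$(n+2)$ bubble term $q_T\mathcal{P}_1$. Here the DOFs in \eqref{equ:DOF-1} with $k=1$ (on faces) should be assembled via Green's formula to control these volume averages. Then for the inductive step, assuming \eqref{equ:m-derivatives} holds for all sub-simplices of dimension greater than $\dim F$, I would fix $F\in\mathcal{F}_{T,k}$ and a multi-index $\bbeta$ with $|\bbeta|=n+1-k$ decomposed into a tangential part (along $F$) and a normal part (along the $\nu_{F,j}$). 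The purely-normal components are exactly the hypotheses \eqref{equ:DOF-1} and vanish by assumption. For components with a tangential derivative, I would integrate by parts over $F$: a tangential derivative of $\partial^{\text{normal}}v$ integrates to boundary terms on $\partial F$, which are lower-dimensional faces of one higher codimension, and these are controlled by the inductive hypothesis applied to those faces.

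\emph{The main obstacle} is the bookkeeping of mixed derivatives: one must show that when a tangential derivative along $F$ hits a function whose normal derivatives are already controlled, the resulting tangential integration by parts produces only boundary contributions over $\partial F$ that lie in the scope of the inductive hypothesis, \emph{and} that the orders match up so that the inductive hypothesis (which controls Hessians of order $n+1-(k+1)=n-k$ on the $(n-k-1)$-dimensional faces) is the right tool. The delicate point is that lowering the tangential derivative order by one via integration by parts must be balanced against stepping up the codimension by one on $\partial F$; I expect this index arithmetic to close precisely because the order $n+1-k$ in \eqref{equ:m-derivatives} is calibrated to the dimension $n-k$ of $F$. A subtlety worth flagging is that the degrees of freedom in \eqref{equ:DOF-1} only use the specific multi-indices $\balpha\in A_k$ with $\sum_{i>k}\alpha_i=0$, so I must verify that the normal directions $\nu_{F,1},\dots,\nu_{F,k}$ span exactly the normal space and that the tangential integration-by-parts argument never requires data outside the prescribed DOF set. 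Once the index matching is confirmed, the tangential and normal contributions together force every entry of $\frac{1}{|F|}\int_F \nabla^{n+1-k}v$ to vanish, completing the induction.
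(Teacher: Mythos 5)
Your mechanism is the right one---purely normal derivatives over $F$ are exactly the degrees of freedom \eqref{equ:DOF-1}, and any derivative with a tangential component is handled by Green's lemma on $F$, producing boundary terms on faces of one higher codimension---but your induction runs in the wrong direction, and as written the argument is circular. You take $k=0$ (i.e.\ $F=T$) as the base case and increase $k$ toward the vertices, so your inductive hypothesis covers faces of dimension \emph{greater} than $\dim F$; yet integration by parts on $F$ produces terms on $\partial F$, i.e.\ on faces of dimension \emph{smaller} than $\dim F$, which your hypothesis does not cover (your own inductive step asserts both things at once). The problem is already visible in your base case: $\int_T \partial^{\bbeta} v$ with $|\bbeta|=n+1$ integrates by parts to averages of \emph{general} $n$-th order derivatives over the codimension-one faces, whereas the degrees of freedom on such a face $F$ control only the pure normal derivative $\partial^n v/\partial\nu_{F,1}^n$ (since $\balpha\in A_1$ forces $\alpha_2=\cdots=\alpha_n=0$). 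Controlling the remaining $n$-th derivatives on those faces is precisely the $k=1$ case of the lemma, which in your scheme is proved \emph{after} the $k=0$ case and using it. Nothing closes.

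The fix is to reverse the induction; this is what the paper's ``Green's lemma applied recursively'' (following Lemma 2.1 of Wang--Xu) amounts to. The base case is $k=n$: at a vertex the tangent space is trivial, $A_n$ contains all multi-indices of length one, and $\nu_{F,1},\dots,\nu_{F,n}$ span $\mathbb{R}^n$, so the vanishing of the degrees of freedom gives $\nabla v(a_i)=0$ outright. For the step from codimension $k+1$ to codimension $k$: a derivative of order $n+1-k$ on $F$ that is purely normal is a vanishing degree of freedom; otherwise write $\partial^{\bbeta}v=\partial_\tau \partial^{\bbeta'}v$ with $\tau$ tangent to $F$ and $|\bbeta'|=n-k$, and Green's lemma on $F$ yields boundary integrals of $\partial^{\bbeta'}v$ over codimension-$(k+1)$ faces, which vanish componentwise by the inductive hypothesis because $n-k=(n+1)-(k+1)$. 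So your ``index arithmetic'' observation is exactly right; it is only the direction of travel (from the vertices up to $T$, not from $T$ down to the vertices) that makes it close, with the $k=0$ case coming last. Note also that no structure of $P_T^{(n+1,n)}$ is needed anywhere: the lemma is a statement about any sufficiently smooth $v$, so your appeal to $\mathcal{P}_{n+1}(T)+q_T\mathcal{P}_1(T)$ in the base case is superfluous.
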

\begin{proof}
This lemma can be proved by applying Green's lemma recursively. We
refer to a similar proof in Lemma 2.1 of \cite{wang2013minimal}
for $m \leq n$. 
\end{proof}

For the unisolvent property, we first show the following crucial
lemma. 
\begin{lemma} \label{lem:similarity-space}
If $v \in P_T^{(n+1,n)} =\mathcal{P}_{n+1}(T) + q_T\mathcal{P}_1(T)$
with all the degrees of freedom in \eqref{equ:DOF-1} zero, then 
\begin{enumerate}
\item For $1\leq k \leq n$, any $(n-k)$-dimensional sub-simplex $F \in
\mathcal{F}_{T,k}$,
\begin{equation} \label{equ:similarity-subsimplex}
v|_{F} \in P_{F}^{(n+1-k,n-k)} = \mathcal{P}_{n+1-k}(F) + q_F
\mathcal{P}_1(F).
\end{equation}
\item In particular, for any $(n-1)$-dimensional sub-simplex $F_l \in
\mathcal{F}_{T,1}$, 
\begin{equation} \label{equ:similarity-space}
v|_{F_l} \in P_{F_l}^{(n,n-1)} = \mathcal{P}_{n}(F_l) +
q_{F_l}\mathcal{P}_1(F_l)
\qquad \forall 1 \leq l \leq n+1.
\end{equation}
\end{enumerate}
\end{lemma}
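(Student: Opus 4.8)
The plan is to establish part (1) for every codimension $k$ and to read off part (2) as the case $k=1$; the natural induction runs on the dimension of $F$, i.e. downward on $k$ from $k=n$ (vertices) to $k=1$. The first, essentially free, observation is that any $F\in\mathcal{F}_{T,k}$ with $k\ge 1$ lies in the $k$ hyperplanes $\{\lambda_j=0\}$ associated with the vertices of $T$ off $F$; since $q_T=\lambda_1\cdots\lambda_{n+1}$ carries each of those factors, $q_T|_F=0$, hence $(q_T\mathcal{P}_1(T))|_F=0$ and $v|_F=p|_F\in\mathcal{P}_{n+1}(F)$ for the polynomial part $p\in\mathcal{P}_{n+1}(T)$ of $v$. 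Thus $v|_F$ is a priori only a degree-$(n+1)$ polynomial on the $(n-k)$-simplex $F$, and the whole content of \eqref{equ:similarity-subsimplex} is to cut this down to the much smaller self-similar space $\mathcal{P}_{n+1-k}(F)+q_F\mathcal{P}_1(F)$. The base case $k=n$ ($F$ a vertex) is trivial.

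For the inductive step I would exploit divisibility by the face bubble $q_F$, which is the product of the $n-k+1$ barycentric coordinates of $F$ and therefore has degree $n+1-k$. By the inductive hypothesis applied to each facet $F'$ of $F$ (an element of $\mathcal{F}_{T,k+1}$), the trace $v|_{F'}$ lies in $\mathcal{P}_{n-k}(F')+q_{F'}\mathcal{P}_1(F')$ and in particular has degree at most $n+1-k$. These facet traces are compatible, since they all come from the single polynomial $v|_F$, so by the (routine) surjectivity of the boundary trace on polynomial spaces I can subtract from $v|_F$ a polynomial $r\in\mathcal{P}_{n+1-k}(F)$ matching them on $\partial F$. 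The remainder $v|_F-r$ then vanishes on every facet of $F$, hence is divisible by $q_F$; writing $v|_F=r+q_F g$ a degree count gives $\deg g\le (n+1)-\deg q_F=k$. When $k=1$ this already finishes the argument: $g$ is linear and $v|_F=r+q_F g\in\mathcal{P}_n(F)+q_F\mathcal{P}_1(F)$, which is exactly \eqref{equ:similarity-space}.

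The hard part is the remaining reduction for $k\ge 2$: I must still show that the cofactor $g$ has degree at most $1$, equivalently that the top $k-1$ homogeneous parts of $v|_F$ collapse and the degree-$(n+2-k)$ part is a multiple of the leading term of $q_F$. This is genuinely the crux, and it is where the vanishing degrees of freedom must really be used, because the facet traces give no information here: since $q_F|_{\partial F}=0$, the boundary data sees only $r$ and never $g$. A purely intrinsic (tangential) argument on $F$ cannot succeed either, as the tangential components of Lemma \ref{lem:vanish-derivatives} alone would still permit, for instance on an edge in three dimensions, the quartic trace $q_F^2\notin P_F^{(2,1)}$. The resolution I would pursue is to invoke the full tensor identity $\frac{1}{|F|}\int_F \nabla^{n+1-k} v=0$ from \eqref{equ:m-derivatives}, in particular its components involving the normal directions $\nu_{F,1},\dots,\nu_{F,k}$ (equivalently the original normal-derivative functionals \eqref{equ:DOF-1} attached to $F$), and to couple them to the tangential degree through the rigid structure $v=p+q_T\ell$ with $p\in\mathcal{P}_{n+1}(T)$: a trace of maximal tangential degree on $F$ would force normal derivatives of $v$ along $F$ that are incompatible with those vanishing functionals. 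Making this coupling quantitative, degree by degree from the top, is the main obstacle; once it forces $\deg g\le 1$ the lemma follows, and part (2) is the case $k=1$.
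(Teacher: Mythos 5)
Your proposal contains a genuine gap, and you have located it yourself: the inductive step for $2\le k\le n-1$ is never carried out. Divisibility by $q_F$ only yields $v|_F=r+q_Fg$ with $\deg g\le k$, and nothing in the proposal forces $\deg g\le 1$; the closing plan (couple the normal components of $\frac{1}{|F|}\int_F\nabla^{n+1-k}v=0$ to the tangential degree, ``degree by degree from the top'') is a program, not an argument. Because your induction ascends from vertices to facets, the unproven intermediate cases break everything above them: your $k=1$ conclusion, which is part (2), invokes the $k=2$ hypothesis, so for $n\ge3$ nothing beyond the trivial base case is actually established. The argument does close for $n\le 2$, but only because the statement is then nearly vacuous: on an edge, $\mathcal{P}_3(F)=\mathcal{P}_2(F)+q_F\mathcal{P}_1(F)$ by a dimension count, so no degrees of freedom are needed there at all; your own example $q_F^2$ on a 3D edge shows exactly where real content begins and where your argument stops. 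Since Lemma \ref{lem:similarity-space} is needed for every $n$ in the induction proving Theorem \ref{thm:unisolvent}, the gap is not a technicality.

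The paper's proof runs in the opposite direction, and that reversal is the idea your proposal is missing. It proves part (2) first and directly, using only the volume averages \eqref{equ:m-derivatives2} (Lemma \ref{lem:vanish-derivatives} with $F=T$): writing $v=\tilde u_n+\lambda_n\hat u_n+\sum c_{i_1,\dots,i_{n-1},0}\,\lambda_1^{i_1}\cdots\lambda_{n-1}^{i_{n-1}}+\sum_j\theta_j\lambda_jq_T$, those conditions show that $c_{i_1,\dots,i_{n-1},0}=0$ unless every $i_k\ge1$, which is the key observation \eqref{equ:key-observation}, and in the two surviving exponent patterns the coefficients are identified explicitly with the bubble coefficients, $c=\theta_k/(n+1)$ in \eqref{equ:nonzero-c1} and $c=(\theta_{k_1}+\theta_{k_2})/(n+1)$ in \eqref{equ:nonzero-c2}; restricting to $\{\lambda_n=0\}$ and using $\lambda_1^F+\cdots+\lambda_{n-1}^F=1-\lambda_{n+1}^F$ then lands the trace exactly in $\mathcal{P}_n(F)+q_F\mathcal{P}_1(F)$. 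Part (1) then follows by a \emph{descending} recursion: by Lemma \ref{lem:vanish-derivatives} the trace $v|_{F_l}$ has all vanishing degrees of freedom of the $(n-1)$-dimensional element, so part (2) can be re-applied one dimension down, and so on. This explicit identification of the top-degree trace coefficients with the $\theta_j$ is precisely the quantitative ``coupling'' your sketch calls for but does not supply; the descent also explains why no separate argument is ever needed on intermediate faces --- the quartic $q_F^2$ on a 3D edge is excluded not by conditions posed on the edge, but by the structure already established on the 2D face containing it.
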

\begin{proof}
(1) is an immediate consequence of (2) by induction.
Without loss of generality, we prove (2) for the case in which $l =
n$.  Applying $\{1, \lambda_1, \cdots, \lambda_n\}$ as the basis of
$\mathcal{P}_1(T)$, then $v$ can be written as 
$$ 
\begin{aligned}
v &= \tilde{v}_{n} + \sum_{i_1 + \cdots + i_n = n+1}
c_{i_1,\cdots,i_n} \lambda_1^{i_1}\cdots \lambda_n^{i_n} +
\sum_{j=1}^n \theta_j \lambda_j q_T \\
&= \tilde{u}_{n} + \lambda_n \hat{u}_n + \sum_{i_1 + \cdots +
    i_{n-1} = n+1} c_{i_1, \cdots, i_{n-1}, 0} \lambda_1^{i_1}
    \cdots \lambda_{n-1}^{i_{n-1}} + \sum_{j=1}^n \theta_j \lambda_j q_T, 
\end{aligned}
$$ 
where $\tilde{u}_n, \hat{u}_n \in \mathcal{P}_n(T)$. Since the volume
average of $(n+1)$-th total derivatives vanishes as shown in Lemma
\ref{lem:vanish-derivatives}, or 
\begin{equation} \label{equ:m-derivatives2}
\frac{1}{|T|}\int_T \frac{\partial^{n+1}v}{\partial \lambda_1^{i_1}
  \cdots \partial \lambda_{n-1}^{i_{n-1}}}  = 0 \qquad \forall i_1 +
  \cdots i_{n-1} = n+1, 
\end{equation}
and 
$$ 
\lambda_jq_T = \lambda_1 \cdots \lambda_j^2 \cdots \lambda_n (1 -
\lambda_1 - \lambda_2 - \cdots - \lambda_n) \in \lambda_n
\mathcal{P}_{n+1}(T),
$$ 
we immediately know that 
\begin{equation} \label{equ:key-observation}
\text{if }c_{i_1, \cdots,i_{n-1},0} \text{ is nonzero, then }
i_k \geq 1 ~(1\leq k \leq n-1).
\end{equation}
Therefore, there are only two
cases in which $c_{i_1, \cdots, i_{n-1},0}$ are nonzero,
\begin{enumerate}
\item $i_k = 3$, the other indexes are $1$. In such case, from
\eqref{equ:m-derivatives2}, we immediately have  
$$ 
3!c_{i_1,\cdots,i_{n-1},0} -
3! \frac{\theta_k}{|T|}\int_T \lambda_n = 0,
$$ 
which implies that 
\begin{equation} \label{equ:nonzero-c1} 
c_{i_1, \cdots, i_{n-1},0} = \frac{\theta_k}{n+1} \qquad i_l=
\begin{cases}
3 & l=k, \\
1 & \mbox{otherwise}.
\end{cases}
\end{equation} 

\item $i_{k_1} = i_{k_2} = 2 ~(k_1 < k_2)$, the other indexes are $1$.
In such case, we have 
$$ 
2!2!c_{i_1,\cdots,i_{n-1},0} - 2!2!\frac{\theta_{k_1} +
  \theta_{k_2}}{|T|}\int_T \lambda_n = 0 
$$
which implies that  
\begin{equation} \label{equ:nonzero-c2}
c_{i_1, \cdots, i_{n-1},0} = \frac{\theta_{k_1} + \theta_{k_2}}{n+1}
\qquad i_l = 
\begin{cases}
2 & l=k_1 \mbox{~or~} k_2 ~(k_1 < k_2),\\
1 & \mbox{otherwise}.
\end{cases}
\end{equation}
\end{enumerate}
To summarize, we have, on $F = F_n$
$$ 
\begin{aligned}
v|_{F} &= \tilde{v}_n|_{F} + \frac{ \lambda_1^F \cdots\lambda_{n-1}^F
}{n+1} \left[\sum_{k=1}^{n-1} \theta_k (\lambda_k^{F})^2 +
\sum_{1\leq k_1 < k_2 \leq n-1}
(\theta_{k_1}+\theta_{k_2})\lambda_{k_1}^F \lambda_{k_2}^F \right] \\ 
  &= \tilde{v}_n|_F + \frac{\lambda_1^F\cdots\lambda_{n-1}^F}{n+1}
  \sum_{k=1}^{n-1} \theta_k \lambda_k^F (\lambda_1^F + \cdots
      +\lambda_{n-1}^F) \\
  &= \tilde{v}_n|_F + \frac{\lambda_1^F\cdots\lambda_{n-1}^F}{n+1}
  \sum_{k=1}^{n-1} \theta_k \lambda_k^F (1-\lambda_{n+1}^F) \qquad
  (\text{Note that }q_F = \lambda_1^F \cdots\lambda_{n-1}^F\lambda_{n+1}^F) \\
  &= \tilde{v}_n|_F + \frac{\lambda_1^F \cdots \lambda_{n-1}^F -
    q_F}{n+1} \sum_{k=1}^{n-1} \theta_k \lambda_k^F \in \mathcal{P}_{n}(F)
      +q_{F}\mathcal{P}_1(F).
\end{aligned}
$$ 
Then, we finish the proof.
\end{proof}

Thanks to the above lemma, we can prove the unisolvent property of the
new nonconforming finite elements by induction on the dimensions. 

\begin{theorem} \label{thm:unisolvent}
For any $n \geq 1$, $D_T^{(n+1,n)}$ is $P_T^{(n+1,n)}$-unisolvent.
\end{theorem}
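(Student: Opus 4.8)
The plan is to prove the theorem by induction on the dimension $n$, which is the natural structure suggested by the similarity Lemma \ref{lem:similarity-space}. Since the dimension count $\dim P_T^{(n+1,n)} = J = C_{2n+1}^n + n$ is already established in \eqref{equ:dim-PT} and \eqref{equ:num-DOF}, unisolvence reduces to the following implication: if $v \in P_T^{(n+1,n)}$ and $d(v) = 0$ for every $d \in D_T^{(n+1,n)}$, then $v \equiv 0$. So the entire proof is a dimensionally-matched injectivity argument.

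\textbf{Base case.} First I would settle $n = 1$, where $T$ is an interval, $P_T^{(2,1)} = \mathcal{P}_2 + q_T \mathcal{P}_1 = \mathcal{P}_3$ (since $q_T = \lambda_1\lambda_2$ already promotes the space to cubics), and the degrees of freedom are the two vertex values together with the one edge-average of the first derivative and the edge-average of the second derivative. Vanishing of these four functionals on a cubic forces $v \equiv 0$ by a direct elementary computation; this anchors the induction.

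\textbf{Inductive step.} Assume the result holds in dimension $n-1$. Let $v \in P_T^{(n+1,n)}$ annihilate all functionals in $D_T^{(n+1,n)}$. In particular the functionals of type \eqref{equ:DOF-1} vanish, so Lemma \ref{lem:similarity-space} applies: on each facet $F_l \in \mathcal{F}_{T,1}$ we have $v|_{F_l} \in P_{F_l}^{(n,n-1)}$, i.e.\ the trace lands in \emph{exactly} the $(n-1)$-dimensional shape function space of the same family. The crucial point is that the functionals of $D_T^{(n+1,n)}$ that are supported on sub-simplexes contained in $F_l$ restrict precisely to the functionals of $D_{F_l}^{(n,n-1)}$ — this is the compatibility of the two definitions under taking traces, and follows because a derivative $\partial^{n+1-k}/\partial \nu_{F,1}^{\alpha_1}\cdots$ along directions tangent to $F_l$ depends only on $v|_{F_l}$. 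Thus $v|_{F_l}$ is a member of $P_{F_l}^{(n,n-1)}$ killed by all of $D_{F_l}^{(n,n-1)}$, and the induction hypothesis forces $v|_{F_l} = 0$ on every facet $F_l$.

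\textbf{Closing the argument.} Once every facet trace vanishes, $v$ vanishes on $\partial T$, so $v = q_T\, w$ for some polynomial $w$; matching degrees in $P_T^{(n+1,n)} = \mathcal{P}_{n+1} + q_T\mathcal{P}_1$ gives $w \in \mathcal{P}_1$, so $v = q_T w$ with $w$ affine. It then remains to show the interior functionals — the volume-type degrees of freedom $d_{T,T,\balpha}$ with $k=n$ hitting total order-$(n+1)$ derivatives, of which there are $\dim \mathcal{P}_1 = n+1$ minus the one redundancy, matching $\dim q_T\mathcal{P}_1 = n+1$ minus the overlap $\mathrm{span}\{q_T\}$ — pin down $w \equiv 0$. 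Concretely, applying the top-order derivative functionals to $q_T w$ and using that $q_T = \lambda_1\cdots\lambda_{n+1}$ has a nonvanishing leading coefficient yields a square linear system in the coefficients of $w$ whose invertibility forces $w = 0$. I expect the \textbf{main obstacle} to be precisely this final interior step: verifying that the remaining top-order volume functionals separate the bubble-enriched space $q_T\mathcal{P}_1$, which requires carefully tracking which multi-indices $\balpha \in A_n$ survive and showing the resulting coefficient matrix is nonsingular rather than merely counting that the dimensions agree. The trace-compatibility claim, while conceptually the heart of why the induction closes, is comparatively routine once the similarity lemma is in hand.
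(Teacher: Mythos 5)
Your overall strategy --- dimension count, induction on $n$ anchored at $n=1$, Lemma \ref{lem:similarity-space} combined with restriction of the degrees of freedom to each facet to force $v|_{F_l}=0$, hence $v=q_T w$ with $w\in\mathcal{P}_1(T)$ --- is exactly the paper's. But your closing step rests on functionals that do not exist. In \eqref{equ:DOF-1} the index $k$ runs over $1\le k\le n$ and $F\in\mathcal{F}_{T,k}$ is always a \emph{proper} sub-simplex: $k=1$ gives the $(n-1)$-dimensional facets carrying order-$n$ normal derivatives, and $k=n$ gives the \emph{vertices} carrying \emph{first} derivatives (order $n+1-k=1$), not ``volume-type degrees of freedom $d_{T,T,\balpha}$ hitting total order-$(n+1)$ derivatives.'' There is no $k=0$ term and no functional supported on $T$ itself; the absence of interior degrees of freedom is a structural feature of the element. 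Consequently the ``square linear system'' you propose to assemble from interior functionals cannot be written down, and the step you flag as the main obstacle attacks the wrong object. The correct way to finish --- which is what the paper does --- is to observe that vanishing of the boundary functionals \eqref{equ:DOF-1} already \emph{implies} the interior conditions: Lemma \ref{lem:vanish-derivatives} (the $k=0$ case of \eqref{equ:m-derivatives}, proved by applying Green's formula recursively) gives $\int_T\partial^{\balpha}v=0$ for all $|\balpha|=n+1$, i.e.\ \eqref{equ:m-derivatives2}. Feeding $v=\theta_0q_T+\sum_{j}\theta_j\lambda_jq_T$ into the computation already carried out in Lemma \ref{lem:similarity-space}, relation \eqref{equ:nonzero-c1} forces $\theta_k=0$ for $1\le k\le n$ (the coefficient of $\lambda_k^3\prod_{l\ne k}\lambda_l$ in the $\mathcal{P}_{n+1}$-part of $v$ is zero), and then \eqref{equ:m-derivatives2} applied to $v=\theta_0q_T$ gives $\theta_0=0$.

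A second, smaller error: your base case misstates the $1$D element. For $n=1$ the degrees of freedom are the two vertex values and the two vertex first derivatives (the $H^2$-conforming Hermite cubic), not the vertex values plus the edge averages of $v'$ and $v''$. The distinction matters: the set you wrote is degenerate, since $\frac{1}{|T|}\int_T v'=\frac{v(a_2)-v(a_1)}{|T|}$ is a linear combination of the vertex values, so with your functionals the base case would actually be false. With the correct functionals the anchor is immediate, and the rest of your induction --- including the trace-compatibility of the degrees of freedom, which you correctly identify and which the paper calls the similarity of the degrees of freedom --- goes through once the closing step is repaired as above.
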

\begin{proof}
As the dimension of $P_T^{(n+1,n)}$ is the same as the number of
local degrees of freedom, it suffices to show that $v=0$ if all the
degrees of freedom vanish. 

For $n = 1$, the element is an $H^2$ conforming $\mathcal{P}_3$
element in 1D, which means that the unisolvent property holds for
$n=1$. By induction hypothesis and Lemma \ref{lem:similarity-space},
we have $v \in q_T \mathcal{P}_1(T)$ if all the degrees of freedom are
zero. Further, similar to the argument in Lemma
\ref{lem:similarity-space}, $v = \theta_0 q_T + \sum_{j=1}^n \theta_j
\lambda_j q_T$ can be written as 
$$ 
v = \theta_0 \lambda_1 \cdots \lambda_n - \sum_{j=1}^n \theta_0
\lambda_j \lambda_1\cdots \lambda_n + \sum_{j=1}^n \theta_j \lambda_j
q_T.
$$ 
From \eqref{equ:nonzero-c1}, for $1\leq k \leq n$, we obtain 
$$ 
\theta_k = (n+1) c_{i_1,\cdots, i_n, 0} = 0
\qquad i_l =
\begin{cases}
3 & l=k, \\
1 & \mbox{otherwise},
\end{cases}
$$ 
which implies that $v \in \mathrm{span}\{q_T\}$. Therefore, $v = 0$ from
\eqref{equ:m-derivatives2}. 
\end{proof}

We note that the unisolvent property of the new nonconforming finite
elements comes from the {\it similarity} of both shape function
and degrees of freedom.  The similarity of shape function means that
the restriction of function on the sub-simplex belongs to the shape
function space of the corresponding element on the sub-simplex when all the
degrees of freedom vanish, as shown in Lemma
\ref{lem:similarity-space}.  The similarity of degrees of freedom
means that the restriction of degrees of freedom on the sub-simplex
belongs to the degrees of freedom of the corresponding element on the
sub-simplex.  These two similarities, which hold for all $m \leq n+1$
in \cite{wang2013minimal} and in this paper, would lead to the
unisolvent property in general.

%% interpolation %%
\subsection{Canonical nodal interpolation}
Based on Theorem \ref{thm:unisolvent}, we can define the interpolation
operator $\Pi_T: H^{n+1}(T) \mapsto P_T^{(n+1,n)}$ by 
\begin{equation} \label{equ:interpolation}
\Pi_T v := \sum_{i=1}^J p_i d_{T,i}(v) \qquad \forall v\in H^{n+1}(T),
\end{equation}
where $p_i \in P_T^{(n+1,n)}$ is the nodal basis function that
satisfies $d_{T,j}(p_i) = \delta_{ij}$, and $\delta_{ij}$ is the
Kronecker delta.  We emphasize here that the operator
$\Pi_T$ is well-defined for all functions in $H^{n+1}(T)$. 

The following error estimate of the interpolation operator can be
obtained by the standard interpolation theory (cf.
\cite{ciarlet1978finite, brenner2007mathematical}).

\begin{lemma} \label{lem:interpolation}
For $s \in [0,1]$ and $m=n+1$, it holds that, for any
integer $0\leq k \leq m$,  
\begin{equation} \label{equ:interpolation-error}
|v - \Pi_T v|_{k,T} \lesssim h_T^{m+s-k}|v|_{m+s,T} \qquad \forall v\in H^{m+s}(T),
\end{equation}
for all shape-regular $n$-simplex $T$.
\end{lemma}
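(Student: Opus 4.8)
The plan is to invoke the Bramble--Hilbert lemma on a reference simplex and then scale back to $T$ by an affine transformation, exactly as in the standard interpolation theory of Ciarlet. First I would verify the three ingredients needed for the Bramble--Hilbert machinery: that the interpolation operator $\Pi_T$ is well-defined and bounded on $H^{m+s}(T)$, that it reproduces polynomials of degree $m = n+1$, and that the degrees of freedom scale in a controlled way under the affine map. Boundedness on $H^{m+s}(T)$ for $s \in [0,1]$ follows because each degree of freedom in \eqref{equ:DOF-1} and \eqref{equ:DOF-2} is a continuous linear functional on $H^{n+1}(T)$ (as already noted via the Sobolev embedding theorem), so $\|\Pi_T v\|_{m,T}$ is controlled by $\|v\|_{m,T}$. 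Polynomial reproduction is immediate from the unisolvent property of Theorem \ref{thm:unisolvent}: since $\mathcal{P}_{n+1}(T) \subset P_T^{(n+1,n)}$, for any $p \in \mathcal{P}_{n+1}(T)$ the element $\Pi_T p$ and $p$ agree on all degrees of freedom, hence $\Pi_T p = p$.

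Next I would carry out the estimate on a fixed reference simplex $\hat{T}$. On $\hat{T}$ the operator $\hat{\Pi}$ is bounded from $H^{m+s}(\hat{T})$ into $P_{\hat{T}}^{(n+1,n)}$ and fixes $\mathcal{P}_{n+1}(\hat{T})$, so the map $I - \hat{\Pi}$ is a bounded linear operator from $H^{m+s}(\hat{T})$ into $H^k(\hat{T})$ that annihilates $\mathcal{P}_{m}(\hat{T})$ for every $0 \le k \le m$. The Bramble--Hilbert lemma (or equivalently the Deny--Lions quotient-space argument) then gives
\begin{equation*}
|\hat{v} - \hat{\Pi}\hat{v}|_{k,\hat{T}} \lesssim |\hat{v}|_{m+s,\hat{T}}
\qquad \forall \hat{v} \in H^{m+s}(\hat{T}),
\end{equation*}
with a constant depending only on $\hat{T}$, $n$, $k$, and $s$. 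Here the fractional case $s \in (0,1)$ requires the Sobolev--Slobodeckij seminorm $|\cdot|_{m+s,\hat{T}}$, and one uses the continuous embedding $H^{m+s}(\hat{T}) \hookrightarrow H^{m}(\hat{T}) \hookrightarrow C^{?}(\hat{T})$ appropriate to evaluating the functionals; since $m = n+1 > n/2$ the point and boundary-integral functionals remain well-defined and bounded in the $H^{m+s}$ topology.

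Finally I would transfer the estimate from $\hat{T}$ to a general shape-regular $T$ by the affine pullback $x = B\hat{x} + b$, using the standard seminorm scaling inequalities $|v|_{k,T} \lesssim \|B^{-1}\|^{k}\,|\det B|^{1/2}\,|\hat{v}|_{k,\hat{T}}$ and its inverse, together with the shape-regularity bounds $\|B\| \lesssim h_T$ and $\|B^{-1}\| \lesssim h_T^{-1}$. A crucial but routine point is that $\Pi_T$ is affine-equivalent to $\hat{\Pi}$, i.e. $\widehat{\Pi_T v} = \hat{\Pi}\hat{v}$; this must be checked because the degrees of freedom involve directional derivatives $\partial^{\balpha}/\partial\nu_{F,1}^{\alpha_1}\cdots\partial\nu_{F,k}^{\alpha_k}$ along normal directions that are not preserved by a general affine map. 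Combining the three scaling estimates yields the claimed power $h_T^{m+s-k}$.

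I expect the main obstacle to be precisely this affine-equivalence verification. The functionals \eqref{equ:DOF-1} are normalized averages of normal derivatives over subsimplices, and under an affine transformation the unit normal vectors $\nu_{F,i}$ do not map to unit normals of $\hat{F}$, so the pullback of a single degree of freedom is a linear combination of several reference degrees of freedom rather than a single one. One resolves this by showing that $\Pi_T$ and $\hat{\Pi}$ nevertheless produce the same interpolant—this holds because both reproduce $\mathcal{P}_{n+1}$ and the full interpolation error can be bounded in terms of the $m$-th order seminorm regardless of the particular basis of functionals, so the change-of-basis constants are absorbed into the generic constant $C$ under shape-regularity. Everything else is a direct application of the references \cite{ciarlet1978finite, brenner2007mathematical}.
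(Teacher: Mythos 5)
Your overall route---polynomial reproduction, Bramble--Hilbert on a reference simplex, affine scaling---is precisely the ``standard interpolation theory'' that the paper invokes (it cites \cite{ciarlet1978finite, brenner2007mathematical} and gives no further argument), and most of your ingredients are verified correctly: continuity of the functionals on $H^{n+1}(T)$, reproduction of $\mathcal{P}_{n+1}$ via Theorem \ref{thm:unisolvent}, and the fractional Deny--Lions argument using that $|\cdot|_{m+s,\hat T}$ vanishes on $\mathcal{P}_m(\hat T)$. The genuine problem is your resolution of the affine-equivalence issue, which you yourself correctly single out as the crux. You propose to resolve it ``by showing that $\Pi_T$ and $\hat{\Pi}$ nevertheless produce the same interpolant,'' i.e. $\widehat{\Pi_T v} = \hat{\Pi}\hat{v}$. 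That identity is false in general. The pulled-back operator $\widehat{\Pi_T}:\hat v \mapsto (\Pi_T v)\circ F$ and the reference operator $\hat\Pi$ are two different projections onto the same space $P_{\hat T}^{(n+1,n)}$: their kernels are cut out by the pulled-back functionals and the reference functionals respectively, and these functionals genuinely differ, since a functional of type \eqref{equ:DOF-1} pulls back to an average of derivatives in the directions $B^{-1}\nu_{F,i}$, which are not parallel to the reference normals. Two projections with the same range but different kernels do not coincide, and the fact that both fix $\mathcal{P}_{n+1}$ does not make them equal off the polynomial subspace; so this step, as stated, fails.

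The standard repair (Ciarlet's ``almost-affine families,'' used for Hermite- and Argyris-type elements whose normal-derivative degrees of freedom are likewise not affine-equivalent) is to drop the comparison with $\hat\Pi$ altogether and apply Bramble--Hilbert directly to $\widehat{\Pi_T}$: it maps $H^{m+s}(\hat T)$ into $P_{\hat T}^{(n+1,n)}$ (the shape space is affine-invariant because barycentric coordinates are), it fixes $\mathcal{P}_{n+1}(\hat T)$, and what must then be proved is that its operator norm is bounded \emph{uniformly} over the shape-regular family. Be aware that this uniformity is not a matter of ``change-of-basis constants absorbed under shape regularity'': a pulled-back functional of type \eqref{equ:DOF-1} with $|\balpha| = n+1-k$ carries a factor of order $h_T^{-(n+1-k)}$, since $\|B^{-1}\nu_{F,i}\| \simeq h_T^{-1}$, and this blows up as $h_T \to 0$. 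The uniform bound emerges only because the corresponding dual basis functions carry the compensating factor $h_T^{\,n+1-k}$. One either tracks these powers explicitly, or renormalizes the pulled-back functionals (multiply by $h_T^{\,n+1-k}$, so that the differentiation directions $h_T B^{-1}\nu_{F,i}$ have length $\simeq 1$) and then uses that unisolvence is an open condition, together with compactness of the set of admissible configurations under shape regularity, to bound the dual bases uniformly. With that replacement your argument is complete and is the argument the paper's citation refers to.
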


%% FEM space %%
\subsection{Global finite element spaces} \label{subsec:global}
We define the piecewise polynomial spaces $V_h^{(n+1, n)}$ and
$V_{h0}^{(n+1,n)}$ as follows: 
\begin{itemize}
\item $V_h^{(n+1,n)}$ consists of all functions $v_h|_T \in
P_{T}^{(n+1,n)}$, such that 
\begin{enumerate} 
\item For any $k\in \{1, \cdots, n\}$, any
$(n-k)$-dimensional sub-simplex $F$ of any $T\in \mathcal{T}_h$ and any
$\balpha \in A_k$ with $|\balpha|=n+1-k$, $d_{T,F,\balpha}(v_h)$ is
continuous through $F$.
\item $d_{T,a,0}(v_h)$ is continuous at any vertex $a$. 
\end{enumerate}

\item $V_{h0}^{(n+1,n)} \subset V_h^{(n+1,n)}$ such that for any $v_h
\in V_{h0}^{(n+1,n)}$,
\begin{enumerate}
\item $d_{T,F,\balpha}(v_h) = 0$ if the $(n-k)$-dimensional sub-simplex
$F\subset \partial \Omega$,
\item $d_{T,a,0}(v_h) = 0$ if the vertex $a\in \partial\Omega$.
\end{enumerate}
\end{itemize}

The global interpolation operator $\Pi_h$ on $H^{m}(\Omega)$ is
defined as follows: 
\begin{equation} \label{equ:interpolation-global}
(\Pi_h v)|_T := \Pi_T(v|_T) \qquad \forall T\in \mathcal{T}_h, v \in
H^{m}(\Omega).
\end{equation}
By the above definition, we have $\Pi_hv \in V_h^{(n+1,n)}$ for any $v\in
H^{n+1}(\Omega)$ and $\Pi_h v \in V_{h0}^{(n+1,n)}$ for any $v\in
H_0^{n+1}(\Omega)$. The approximate property of $V_h^{(n+1,n)}$ and
$V_{h0}^{(n+1,n)}$ then follows directly from Lemma
\ref{lem:interpolation}. 

\begin{theorem} \label{thm:interpolation-global}
For $s \in [0,1]$ and $m=n+1$, it holds that 
\begin{equation} \label{equ:interpolation-global-error}
\|v - \Pi_h v\|_{m,h} \lesssim h^{s}|v|_{m+s,\Omega} \qquad \forall
v\in H^{m+s}(\Omega),
\end{equation}
and for any $v \in H^{m}(\Omega)$,
\begin{equation} \label{equ:approximability} 
\lim_{h\to 0}\|v - \Pi_h v\|_{m,h} = 0.
\end{equation}
\end{theorem}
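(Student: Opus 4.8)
The plan is to derive the global interpolation error estimate \eqref{equ:interpolation-global-error} by summing the local estimates from Lemma \ref{lem:interpolation} over all elements, and then obtain the approximability result \eqref{equ:approximability} by a standard density argument. First I would invoke Lemma \ref{lem:interpolation} on each simplex $T$ with $k = m$, which gives $|v - \Pi_T(v|_T)|_{m,T} \lesssim h_T^s |v|_{m+s,T}$ for every $v \in H^{m+s}(T)$. Since $(\Pi_h v)|_T = \Pi_T(v|_T)$ by \eqref{equ:interpolation-global}, I would square and sum these over $T \in \mathcal{T}_h$. Using the quasi-uniformity assumption to replace each $h_T$ by $h$ (up to the constant $\eta$), the right-hand side becomes $\lesssim h^{2s} \sum_T |v|_{m+s,T}^2 = h^{2s}|v|_{m+s,\Omega}^2$. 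To control the full broken norm $\|v - \Pi_h v\|_{m,h}$ rather than just the top-order seminorm, I would either sum the local estimate over all $0 \le k \le m$ — noting that for $k < m$ the exponent $m+s-k \ge s$, so after factoring $h_T^{m+s-k} = h_T^{m-k} \cdot h_T^s$ and using $h_T \le h \lesssim 1$ (or absorbing the mesh-diameter powers into the constant, assuming $h$ bounded), each lower-order term is also $\lesssim h^s |v|_{m+s,T}$ — and collect all terms to conclude \eqref{equ:interpolation-global-error}.

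For the approximability statement \eqref{equ:approximability}, the difficulty is that \eqref{equ:interpolation-global-error} requires $v \in H^{m+s}(\Omega)$ with $s > 0$, whereas here $v$ is only assumed to lie in $H^m(\Omega)$, so I cannot apply the estimate directly. The standard remedy is a density argument: $C^\infty(\bar\Omega) \cap H^m(\Omega)$ (equivalently, smooth functions) is dense in $H^m(\Omega)$. Given $v \in H^m(\Omega)$ and $\varepsilon > 0$, I would choose a smooth $w$ with $\|v - w\|_{m,\Omega} < \varepsilon$, and split
\[
\|v - \Pi_h v\|_{m,h} \le \|v - w\|_{m,h} + \|w - \Pi_h w\|_{m,h} + \|\Pi_h(w - v)\|_{m,h}.
\]
The first term equals $\|v - w\|_{m,\Omega} < \varepsilon$ since the broken and global norms coincide on $H^m(\Omega)$. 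The middle term tends to $0$ as $h \to 0$ by applying \eqref{equ:interpolation-global-error} to the smooth function $w$ (which lies in $H^{m+1}(\Omega)$, so $s = 1$ is admissible), giving a bound $\lesssim h |w|_{m+1,\Omega} \to 0$.

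The remaining obstacle — and the step I expect to require the most care — is the third term $\|\Pi_h(w-v)\|_{m,h}$, the stability of the interpolation operator. I would establish a local stability bound of the form $|\Pi_T u|_{m,T} \lesssim \|u\|_{m,T}$ (or more safely $\|u\|_{m+s,T}$) uniformly in $T$, then combine it with the interpolation error estimate via the triangle inequality: $\|\Pi_h(w-v)\|_{m,h} \le \|w - v\|_{m,h} + \|(w-v) - \Pi_h(w-v)\|_{m,h}$. The second piece here again needs the error estimate, which is only valid for $H^{m+s}$ data, so the cleanest route is instead to bound $\|\Pi_h(w-v)\|_{m,h}$ directly by interpolation stability applied on each element and summed, yielding $\lesssim \|w - v\|_{m,h} = \|w-v\|_{m,\Omega} < C\varepsilon$. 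Establishing this uniform $H^m$-stability rests on scaling the reference-element bound (where the finite-dimensional functionals $d_{T,i}$ are continuous on $H^m$ by the Sobolev embedding noted after \eqref{equ:DOF-2}) through the affine map and tracking the correct powers of $h_T$; the quasi-uniformity hypothesis guarantees these powers assemble into a uniform constant. Putting the three estimates together and letting $h \to 0$ for fixed $\varepsilon$, then sending $\varepsilon \to 0$, yields \eqref{equ:approximability}.
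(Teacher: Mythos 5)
Your treatment of \eqref{equ:interpolation-global-error} is fine and is exactly what the paper intends: square and sum the local estimate of Lemma \ref{lem:interpolation}, using $h_T \le h$ for the top-order term and $h_T^{m-k}\lesssim 1$ for the lower-order ones. Your plan for \eqref{equ:approximability} also has the right skeleton --- it is the standard density argument, which is what the paper invokes by citing Theorem 2.1 of Wang--Xu. However, there is a genuine gap in the one step you yourself flag as delicate: the uniform $H^m$-stability of $\Pi_T$ \emph{cannot} be obtained by ``scaling the reference-element bound through the affine map and tracking the powers of $h_T$.'' On the reference element the functionals are only bounded by the full norm, $|d_{\hat T,i}(\hat v)| \lesssim \|\hat v\|_{m,\hat T}$, and pulling this back to $T$ mixes all the seminorms with \emph{negative} powers of $h_T$:
\begin{equation*}
|\Pi_T u|_{m,T} \;\lesssim\; \sum_{k=0}^{m} h_T^{\,k-m}\,|u|_{k,T},
\end{equation*}
so the constant blows up like $h_T^{-m}$ as $h\to 0$. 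Quasi-uniformity is of no help here --- it only compares $h_T$ with $h$ and cannot remove negative powers. The stability bound is in fact true, but its proof requires the polynomial reproduction $\mathcal{P}_m(T)\subset P_T^{(n+1,n)}$ together with a Bramble--Hilbert argument (subtract the averaged Taylor polynomial of $u$ before scaling); that is precisely the content of Lemma \ref{lem:interpolation}, not a consequence of naive scaling.

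The clean repair uses a fact you overlooked: Lemma \ref{lem:interpolation} (and the theorem itself) is stated for $s\in[0,1]$, \emph{including} $s=0$, so your premise that the error estimate ``requires $H^{m+s}$ data with $s>0$'' is a misreading. With $s=0$ you get $\|u-\Pi_h u\|_{m,h}\lesssim \|u\|_{m,\Omega}$ for every $u\in H^m(\Omega)$, which gives stability for free via the triangle inequality, and it also collapses your three-term split into the two-term one actually used in \cite[Theorem 2.1]{wang2013minimal}: writing $v-\Pi_h v = \bigl[(v-w)-\Pi_h(v-w)\bigr] + \bigl[w - \Pi_h w\bigr]$ for a smooth $w$ with $\|v-w\|_{m,\Omega}<\varepsilon$,
\begin{equation*}
\|v - \Pi_h v\|_{m,h} \;\le\; \|(v-w)-\Pi_h(v-w)\|_{m,h} + \|w-\Pi_h w\|_{m,h}
\;\lesssim\; \|v-w\|_{m,\Omega} + h\,|w|_{m+1,\Omega},
\end{equation*}
where the first term uses the $s=0$ case and the second the $s=1$ case. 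Letting $h\to 0$ and then $\varepsilon\to 0$ finishes the proof without any separate stability lemma.
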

\begin{proof}
The proof of \eqref{equ:approximability} follows the same argument in
\cite[Theorem 2.1]{wang2013minimal} and is therefore omitted here.
\end{proof}

The following lemma can be obtained directly by Lemma
\ref{lem:vanish-derivatives}. 

\begin{lemma} \label{lem:face-average}
Let $1\leq k \leq n$ and $F$ be an $(n-k)$-dimensional sub-simplex of
$T\in \mathcal{T}_h$. Then, for any $v_h \in V_h^{(n+1,n)}$ and
any $T'\in \mathcal{T}_h$ with $F\subset T'$,
\begin{equation} \label{equ:face-average}
\int_F \partial^{\balpha}(v_h|_{T}) = \int_F
\partial^{\balpha}(v_h|_{T'}) \qquad |\balpha| =  
\begin{cases}
n+1-k & k < n, \\
0,1 & k=n.
\end{cases} 
\end{equation}
If $F \subset \partial \Omega$, then for any $v_h \in
V_{h0}^{(n+1,n)}$,
\begin{equation} \label{equ:face-average0}
\int_F \partial^{\balpha}(v_h|_{T}) = 0 \qquad |\balpha| =  
\begin{cases}
n+1-k & k < n, \\
0,1 & k=n.
\end{cases}
\end{equation}
\end{lemma}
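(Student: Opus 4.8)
The plan is to deduce Lemma \ref{lem:face-average} from Lemma \ref{lem:vanish-derivatives} by exploiting the continuity conditions built into the definition of the global spaces. The key observation is that Lemma \ref{lem:vanish-derivatives} characterizes the face averages \(\frac{1}{|F|}\int_F \nabla^{n+1-k} v\) as being determined entirely by the degrees of freedom \(d_{T,F,\balpha}\) (for \(1 \le k \le n\)) together with the vertex values (for \(k=n\)). More precisely, the full Hessian tensor average \(\frac{1}{|F|}\int_F \nabla^{n+1-k}v\) has components that are linear combinations of the directional-derivative averages \(d_{T,F,\balpha}(v)\) ranging over \(\balpha \in A_k\) with \(|\balpha|=n+1-k\); conversely, each such \(d_{T,F,\balpha}(v)\) is a particular component (in the basis adapted to the normal frame \(\nu_{F,1},\dots,\nu_{F,k}\)) of that tensor average. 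Thus the quantity \(\int_F \partial^{\balpha}(v_h|_T)\) appearing in \eqref{equ:face-average} is, up to the normalization \(1/|F|\), exactly one of the degrees of freedom \(d_{T,F,\balpha}(v_h|_T)\) when \(|\balpha|=n+1-k\) and \(k<n\), and is a vertex value or its analogue when \(k=n\).

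First I would treat the case \(k < n\), so \(|\balpha| = n+1-k\). Here \(\int_F \partial^{\balpha}(v_h|_T)\) is (after multiplying by \(1/|F|\)) precisely the degree of freedom \(d_{T,F,\balpha}(v_h|_T)\) defined in \eqref{equ:DOF-1}, provided \(\balpha \in A_k\); for a general \(\balpha\) with \(|\balpha|=n+1-k\) one first expresses the derivative in the coordinate frame aligned with the tangent and normal directions of \(F\) and uses Lemma \ref{lem:vanish-derivatives} to reduce to the normal-frame components. Since \(v_h \in V_h^{(n+1,n)}\) satisfies, by the first continuity condition in the definition of \(V_h^{(n+1,n)}\), that \(d_{T,F,\balpha}(v_h)\) is continuous through \(F\), we immediately get \(d_{T,F,\balpha}(v_h|_T) = d_{T,F,\balpha}(v_h|_{T'})\), which is exactly \eqref{equ:face-average}. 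Next I would handle the case \(k=n\), where \(F\) is a vertex \(a\). For \(|\balpha|=0\), \eqref{equ:face-average} asserts \(v_h|_T(a) = v_h|_{T'}(a)\), which is the vertex-continuity condition \(d_{T,a,0}(v_h)\) being single-valued; for \(|\balpha|=1\) the first derivative value at the vertex is controlled through Lemma \ref{lem:vanish-derivatives} applied with \(k=n\), giving \(\frac{1}{|F|}\int_F \nabla^{1} v = 0\)-type averaging that ties the first-order data at the vertex to the already-matched lower-order degrees of freedom.

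For the homogeneous statement \eqref{equ:face-average0}, the argument is identical but uses the boundary conditions defining \(V_{h0}^{(n+1,n)}\): when \(F \subset \partial\Omega\), the corresponding degrees of freedom \(d_{T,F,\balpha}(v_h)\) and \(d_{T,a,0}(v_h)\) are set to zero, so \(\int_F \partial^{\balpha}(v_h|_T) = 0\) follows directly from the same identification of the integral with a (now vanishing) degree of freedom.

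The main obstacle I anticipate is the bookkeeping at the case \(k=n\) with \(|\balpha|=1\), since there the connection between the first-derivative integral over a single vertex and the degrees of freedom is the least transparent: one must verify that Lemma \ref{lem:vanish-derivatives}, which produces the vanishing of full derivative-tensor averages \(\frac{1}{|F|}\int_F \nabla^{n+1-k}v\) including the \(k=n\) instance \(\nabla^{1}v\), genuinely pins down the first-order vertex data in terms of the globally matched functionals, rather than requiring a separate continuity hypothesis. A careful reading of the frame-change from the normal vectors \(\nu_{F,j}\) to an arbitrary derivative direction, together with the recursive Green's-lemma structure invoked in Lemma \ref{lem:vanish-derivatives}, should resolve this, but it is the step where the indexing conventions must be tracked most attentively.
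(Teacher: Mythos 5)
Your overall strategy --- writing $\int_F \partial^{\balpha}(v_h|_T)$ as a combination of degrees of freedom that the definition of $V_h^{(n+1,n)}$ makes single-valued across elements --- is exactly the route the paper intends (its proof is the one-line remark that the lemma follows from Lemma \ref{lem:vanish-derivatives}). However, your key intermediate claim is false as stated: the components of $\frac{1}{|F|}\int_F \nabla^{n+1-k}v$ are \emph{not} linear combinations of the degrees of freedom $d_{T,F,\balpha}(v)$ attached to that same $F$. The purely normal components are (that is your correct ``converse'' direction), but any component involving a tangential direction is not. Concretely, for $n=2$ let $F$ be an edge with tangent $\tau$ and normal $\nu$; then $\frac{1}{|F|}\int_F \partial_\nu \partial_\tau v = \frac{1}{|F|}\bigl(\partial_\nu v(a_2)-\partial_\nu v(a_1)\bigr)$ is governed by the vertex gradients and is completely independent of the single edge degree of freedom $\frac{1}{|F|}\int_F \partial^2 v/\partial \nu^2$ (take $v$ with $\partial^2 v/\partial \nu^2 \equiv 0$ and $\partial_\nu\partial_\tau v \equiv 1$). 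So your step ``for a general $\balpha$ \dots reduce to the normal-frame components'' cannot be carried out on $F$ alone. The missing idea is the recursive Green's-lemma reduction underlying Lemma \ref{lem:vanish-derivatives}: integrate each tangential derivative by parts \emph{along} $F$, producing averages of order-$(n-k)$ derivatives over the codimension-$(k+1)$ sub-simplices of $\partial F$, and recurse until only purely normal degrees of freedom on sub-simplices of $\bar F$ and gradients at vertices remain. This yields $\int_F \partial^{\balpha}(v_h|_T) = \sum_{F'\subseteq \bar F}\sum_{\bbeta} c_{F',\bbeta}\, d_{T,F',\bbeta}(v_h|_T)$ with coefficients intrinsic to $\bar F$; since $T$ and $T'$ share every sub-simplex of $\bar F$, and the global space makes those functionals single-valued (or zero when $F\subset\partial\Omega$ in the case of $V_{h0}^{(n+1,n)}$), \eqref{equ:face-average} and \eqref{equ:face-average0} follow.

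Two further points. First, one cannot bypass this by pure linear algebra from the \emph{statement} of Lemma \ref{lem:vanish-derivatives} (a functional vanishing on the common kernel of finitely many functionals is a linear combination of them): that argument only produces a combination of \emph{all} degrees of freedom of $T$, including those attached to sub-simplices not contained in $\bar F$, which $T'$ does not share, so the cross-element comparison does not close. The locality of the representation is what the recursion in the proof of Lemma \ref{lem:vanish-derivatives} provides, and it is the crux here. Second, you flag the case $k=n$, $|\balpha|=1$ as the delicate one, but it is in fact immediate: at a vertex $a$ the vectors $\nu_{F,1},\dots,\nu_{F,n}$ span $\mathbb{R}^n$, so each Cartesian component of $\nabla v_h(a)$ is a fixed linear combination of the functionals $d_{T,a,\bbeta}(v_h)$ with $|\bbeta|=1$ from \eqref{equ:DOF-1} (the case $k=n$ there), which are single-valued by the definition of the global space; no appeal to lower-order data is needed. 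The genuinely delicate case is $k<n$ with tangential components, which is precisely where your argument has the gap.
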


\subsection{Weak continuity} 
We note that the conformity of the proposed finite elements is
decreasing with the growth of the dimension.  In fact, $V_h^{(2,1)}$
is the subset of $H^2(\Omega)$ in 1D, and $V_h^{(3,2)}$ is the subset
of $H^1(\Omega)$ in 2D. When $n>2$, a function in $V_h^{(n+1,n)}$
cannot even be continuous, while it holds the weak continuity.  From
Lemma \ref{lem:face-average}, we have the following lemma.

\begin{lemma} \label{lem:weak-continuity}
For $m=n+1$, let $|\balpha|<m$ and $F$ be an $(n-1)$-dimension sub-simplex of
$T\in \mathcal{T}_h$. Then, for any $v_h \in V_h^{(n+1,n)}$,
$\partial_h^{\balpha}v_h$ is continuous at a point on $F$ at least.
If $F\subset \partial \Omega$, then $\partial_h^{\balpha}v_h$ vanishes
at a point on $F$ at least.  
\end{lemma}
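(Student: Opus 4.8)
The plan is to treat each multi-index $\balpha$ with $|\balpha|\le n$ separately and to convert the desired pointwise statement into an averaged one. Fix an $(n-1)$-dimensional sub-simplex $F$ shared by $T$ and $T'$ and set $J_\balpha := \partial^\balpha(v_h|_T)-\partial^\balpha(v_h|_{T'})$ on $F$; since each piece lies in $P_T^{(n+1,n)}$, the jump $J_\balpha$ is a polynomial, hence continuous, on the connected set $F$. The engine is the mean value theorem for integrals: if $\int_F J_\balpha = 0$, then $J_\balpha$ must vanish at some point of $F$, which is exactly the asserted continuity of $\partial_h^\balpha v_h$ at a point. For the boundary case $F\subset\partial\Omega$ there is a single element $T$ and no jump; I would instead show $\int_F \partial^\balpha(v_h|_T)=0$ and apply the same mean value argument to conclude that $\partial_h^\balpha v_h$ itself vanishes somewhere on $F$. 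Thus the whole lemma reduces to establishing the vanishing of the relevant face integrals for every $|\balpha|\le n$.

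First I would dispose of the easy directions. Writing $\partial^\balpha=\partial_{\nu_{F,1}}^{r}\partial_{t}^{\balpha_t}$ with $r$ the order in the normal direction and $\partial_t^{\balpha_t}$ purely tangential to $F$, observe that when $r=0$ the trace $v_h|_F$ is \emph{single-valued}: by Lemma \ref{lem:similarity-space} it lies in $P_F^{(n,n-1)}$, and by the similarity of the degrees of freedom together with the continuity requirements defining $V_h^{(n+1,n)}$ it is uniquely fixed by the shared (continuous) degrees of freedom on $F$ and its sub-simplices. Hence every purely tangential derivative has $J_\balpha\equiv 0$ on $F$, and on a boundary face the vanishing boundary degrees of freedom force the trace, and so these derivatives, to vanish identically. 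At the opposite extreme, when $|\balpha|=n$ the face integral of $J_\balpha$ is exactly the jump of a degree-$n$ derivative, which vanishes by Lemma \ref{lem:face-average} in the case $k=1$ (and by its boundary version when $F\subset\partial\Omega$). These two observations settle every $\balpha$ that is either purely tangential or of top total order $n$.

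The remaining and hardest case is $|\balpha|<n$ with $r\ge 1$, where Lemma \ref{lem:face-average} on $F$ gives no direct information. Here the plan is to trade tangential order against dimension: integrating $\int_F \partial_{\nu_{F,1}}^{r}\partial_t^{\balpha_t}v_h$ by parts on the simplex $F$ (via the divergence theorem) moves one tangential derivative onto $\partial F$ and rewrites the quantity as a sum of integrals over the $(n-2)$-dimensional sub-simplices of $F$ of jumps of derivatives of one lower order, with the normal direction $\nu_{F,1}$ still present. Iterating this reduction peels off tangential derivatives while descending through the sub-simplices of $T$, and on a codimension-$k$ sub-simplex the surviving derivative has order $n+1-k$, which is precisely the order matched by the degrees of freedom there; the matching is supplied by Lemma \ref{lem:face-average} for the corresponding $k$, terminating at the vertices where only orders $0,1$ occur. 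I expect this bookkeeping — tracking which normal directions survive each integration by parts and verifying that the orders produced align with those controlled by Lemma \ref{lem:face-average} at every codimension — to be the main obstacle, and it is exactly the place where the induction on dimension underlying the whole construction is used. The boundary statement follows by the same chain, replacing each matching of jumps with the vanishing face integrals of the boundary version of Lemma \ref{lem:face-average} along the sub-simplices lying in $\partial\Omega$.
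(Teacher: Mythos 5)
Your engine---the mean value theorem applied to the face integral of the polynomial jump $J_\balpha$---is the right one, and your treatment of $|\balpha|=n$ via the $k=1$ case of Lemma \ref{lem:face-average} is exactly what is intended. But both of your other branches have genuine gaps. First, the claim that the trace $v_h|_F$ is single-valued, so that purely tangential derivatives have $J_\balpha\equiv 0$ on $F$, is false for $n>2$: Lemma \ref{lem:similarity-space} is a \emph{conditional} statement, describing $v|_F$ only when the degrees of freedom \eqref{equ:DOF-1} vanish, and it cannot be invoked for a generic $v_h\in V_h^{(n+1,n)}$, whose restriction to $F$ is a polynomial of degree up to $n+2$ that the shared degrees of freedom do not determine. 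Indeed, the paper states explicitly that for $n>2$ a function in $V_h^{(n+1,n)}$ cannot even be continuous, so no argument can yield an identically vanishing trace jump; only continuity at isolated points is available.

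Second, and more seriously, your integration-by-parts reduction for $1\le|\balpha|<n$ cannot close. Each integration by parts lowers the derivative order by one and the dimension by one \emph{simultaneously}, while the order controlled by Lemma \ref{lem:face-average} on a codimension-$k$ sub-simplex, namely $n+1-k$, also drops by one per unit of codimension. Hence the deficit between the controlled order and the order you are carrying is invariant along the reduction and equals $n-|\balpha|$ at every stage: if it is positive at the start, the face integrals you generate are never of the type Lemma \ref{lem:face-average} controls, and the reduction terminates at the vertices with orders outside $\{0,1\}$ unless $|\balpha|\ge n-1$. The missing idea---and the reason the paper deduces the lemma from Lemma \ref{lem:face-average} in one step---is that you should apply that lemma not on $F$ itself but on a sub-simplex of the correct codimension \emph{inside} $F$: given $1\le|\balpha|\le n$, set $k:=n+1-|\balpha|\in\{1,\dots,n\}$ and pick any $(n-k)$-dimensional sub-simplex $F'$ of $T$ with $F'\subset F$; since $F'\subset F\subset T\cap T'$, Lemma \ref{lem:face-average} applied on $F'$ (its $k<n$ branch, or its $k=n$ branch when $|\balpha|\in\{0,1\}$) gives $\int_{F'}J_\balpha=0$ directly, with no integration by parts, and your mean value argument then produces a point of continuity lying on $F'\subset F$. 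The boundary statement is identical using \eqref{equ:face-average0}, since every sub-simplex of $F\subset\partial\Omega$ also lies in $\partial\Omega$.
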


The properties in Lemma \ref{lem:weak-continuity} are called {\it weak
continuity} for $V_h^{(n+1,n)}$ and {\it weak zero-boundary
condition} for $V_{h0}^{(n+1,n)}$. Let $S_h^l$ be the
$\mathcal{P}_l$-Lagrange space on $\mathcal{T}_h$ (cf.
\cite{ciarlet1978finite, brenner2007mathematical}), and $\Xi_T^l$ be
the set of nodal points on $T$. Setting 
$$ 
W_h := \{w \in L^2(\Omega)~:~w|_T \in C^\infty(T),~\forall T\in
\mathcal{T}_h\},
$$ 
we define the operator $\Pi_h^{p,l}: W_h \mapsto S_h^l$ as
follows: For all $T\in \mathcal{T}_h$, $\Pi_h^{p,l}|_T \in
\mathcal{P}_l(T)$, and for each $x\in \Xi_T^l$,
\begin{equation} \label{equ:Pl-Lagrangian} 
\Pi_h^{p,l}v(x) :=
\frac{1}{N_h(x)} \sum_{T'\in \mathcal{T}_h(x)} v|_{T'}(x). 
\end{equation}
where $\mathcal{T}_h(x) = \{T'\in \mathcal{T}_h~:~x\in T'\}$ and
$N_h(x) = \# \mathcal{T}_h(x)$. Further, let $S_{h0}^l = S_h^l \cap
H_0^1(\Omega)$, then the operator $\Pi_{h0}^{p,l}: W_h \mapsto
S_{h0}^l$ is defined for each $x\in \Xi_T^l$ as, 
\begin{equation} \label{equ:Pl0-Lagrangian}
\Pi_{h0}^{p,l}v(x) := 
\begin{cases} 
0 & x \in \partial \Omega, \\
\Pi_{h}^{p,l}v(x) & \mbox{otherwise}. 
\end{cases}
\end{equation}
Following the argument in \cite{wang2001necessity}, we have the
following lemma.
\begin{lemma} \label{lem:H1-weak-approximation}
For any $v_h \in V_h^{(n+1,n)}$ and $|\balpha| < m=n+1$, $v_{\balpha} :=
\Pi_h^{p,m+1-|\balpha|} (\partial^\balpha_h v_h) \in H^1(\Omega)$
satisfies 
\begin{equation} \label{equ:H1-weak-approximation} 
|\partial_h^\balpha v_h - v_\balpha|_{j,h} \lesssim h^{m-|\balpha|-j}
|v_h|_{m,h} \qquad 0 \leq j \leq m-|\balpha|.
\end{equation} 
Further, when $v_h\in V_{h0}^{(n+1,n)}$,
\eqref{equ:H1-weak-approximation} holds when $v_\balpha :=
\Pi_{h0}^{p,m+1-|\balpha|} (\partial^\balpha v_h) \in H_0^1(\Omega)$. 
\end{lemma}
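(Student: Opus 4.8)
The plan is to follow the nodal-averaging (enrichment) argument of \cite{wang2001necessity}, reducing the estimate to a bound on the $L^2$-norms of the jumps of $\partial^\balpha_h v_h$ across $(n-1)$-dimensional faces. The starting observation is that, since $v_h|_T \in P_T^{(n+1,n)} \subset \mathcal{P}_{n+2}(T) = \mathcal{P}_{m+1}(T)$, the piecewise derivative $\partial^\balpha_h v_h$ is elementwise a polynomial of degree at most $l := m+1-|\balpha|$. Hence the averaging operator $\Pi_h^{p,l}$ reproduces $\partial^\balpha_h v_h$ on each $T$ up to nodal averaging, and the difference $w := \partial^\balpha_h v_h - v_\balpha$ is, on every $T$, a polynomial in $\mathcal{P}_l(T)$ whose value at each Lagrange node $x \in \Xi_T^l$ equals $\frac{1}{N_h(x)}\sum_{T'\in\mathcal{T}_h(x)}\big((\partial^\balpha v_h|_T)(x)-(\partial^\balpha v_h|_{T'})(x)\big)$, i.e.\ a local average of the pointwise jumps of $\partial^\balpha_h v_h$ at $x$. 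Since $\Pi_h^{p,l}$ maps into $S_h^l\subset H^1(\Omega)$ (resp.\ $S_{h0}^l\subset H_0^1(\Omega)$), the regularity $v_\balpha\in H^1(\Omega)$ (resp.\ $H_0^1(\Omega)$) is immediate.

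Next I would convert this nodal description into the broken seminorm. On each $T$, since $w|_T\in\mathcal{P}_l(T)$, an inverse inequality together with the equivalence of norms on the finite-dimensional space $\mathcal{P}_l$ (after scaling to the reference simplex) gives $|w|_{j,T}^2 \lesssim h_T^{n-2j}\sum_{x\in\Xi_T^l}|w(x)|^2$. Bounding each pointwise nodal jump by the $L^\infty(F)$-, hence (by another inverse inequality on the $(n-1)$-dimensional face) the $L^2(F)$-norm of the jump polynomial $[\partial^\balpha_h v_h]|_F$, and using $h_F\sim h_T$ from shape-regularity and quasi-uniformity, I obtain the reduced estimate $|w|_{j,T}^2 \lesssim \sum_{F\subset\partial T} h_T^{1-2j}\big\|[\partial^\balpha_h v_h]\big\|_{0,F}^2$. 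Summing over $T$ with finite overlap, the lemma follows once one proves the \emph{face-jump bound} $\big\|[\partial^\balpha_h v_h]\big\|_{0,F}\lesssim h_T^{m-|\balpha|-1/2}\big(|v_h|_{m,T}+|v_h|_{m,T'}\big)$, where $T,T'$ share $F$.

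The crux, and the main obstacle, is this face-jump bound, which I would establish by downward induction on $|\balpha|$ from $m-1$ to $0$. Fix $F$ and set $\phi:=[\partial^\balpha_h v_h]|_F$, a polynomial on $F$. By the weak continuity of Lemma \ref{lem:weak-continuity} there is a point $x_0\in F$ at which $\partial^\balpha_h v_h$ is continuous, so $\phi(x_0)=0$. On the finite-dimensional polynomial space the functional $p\mapsto |p(x_0)|+\|\nabla_F p\|_{0,F}$ is a norm, and its equivalence constant is uniform over $x_0$ by compactness of the reference face; after scaling this yields the point-anchored Poincar\'e inequality $\|\phi\|_{0,F}\lesssim h_F\|\nabla_F\phi\|_{0,F}$. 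Since $\nabla_F\phi=[\nabla_F\partial^\balpha_h v_h]|_F$ consists only of jumps of derivatives of order $|\balpha|+1$ (the extra derivative being tangential), these are controlled by the induction hypothesis, giving $\|\phi\|_{0,F}\lesssim h_F\, h_T^{m-|\balpha|-3/2}(|v_h|_{m,T}+|v_h|_{m,T'})$, which is the claimed order. The induction bottoms out at order $m$, where no continuity is needed and $\|\partial^\gamma_h v_h\|_{0,F}\lesssim h_T^{-1/2}\|\partial^\gamma_h v_h\|_{0,T}\le h_T^{-1/2}|v_h|_{m,T}$ follows from the trace and inverse inequalities, matching the formula at $|\balpha|=m$. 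I expect the delicate point to be exactly the uniformity of the Poincar\'e constant with respect to the a priori unknown continuity point $x_0$, which is why the polynomial (finite-dimensional) structure and reference-face compactness are essential.

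Finally, the homogeneous case is handled identically. For $v_h\in V_{h0}^{(n+1,n)}$ and a boundary face $F\subset\partial\Omega$, the operator $\Pi_{h0}^{p,l}$ forces the boundary nodal values to zero, so the relevant quantity is the one-sided trace $\partial^\balpha_h v_h|_T$ itself rather than a jump; Lemma \ref{lem:weak-continuity} now provides a point on $F$ where this trace vanishes, and the same induction (with the role of the jump played by the trace, and \eqref{equ:face-average0} of Lemma \ref{lem:face-average} supplying the top-order vanishing) yields $\|\partial^\balpha_h v_h|_T\|_{0,F}\lesssim h_T^{m-|\balpha|-1/2}|v_h|_{m,T}$. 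Assembling the per-element estimates as above then gives \eqref{equ:H1-weak-approximation} in both cases.
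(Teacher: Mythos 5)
Your proposal is correct and is essentially the argument the paper relies on: the paper omits the proof and cites Lemma 3.1 of Wang--Xu (2013) (following Wang (2001)), whose proof is exactly this scheme --- nodal averaging into $S_h^l$, scaling/norm-equivalence on $\mathcal{P}_l$ to reduce to nodal jumps, face-jump bounds by downward induction on $|\balpha|$ using the weak continuity of Lemma \ref{lem:weak-continuity} as the anchor for a point-anchored Poincar\'e inequality, and trace plus inverse inequalities at top order. The only (cosmetic) imprecision is that for nodes of $T$ lying on lower-dimensional sub-simplexes the pointwise jump must be chained through face-adjacent elements of the surrounding patch, so the per-element bound should sum over all $(n-1)$-faces in the local patch of elements meeting $T$ rather than only $F\subset\partial T$; by finite overlap this does not affect \eqref{equ:H1-weak-approximation}.
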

\begin{proof}
The proof follows a similar argument in \cite[Lemma
3.1]{wang2013minimal} and is therefore omitted here.
\end{proof}

Thanks to the weak continuity, the Poincar\'{e} inequalities for the
new nonconforming finite elements can be obtained. 
\begin{theorem} \label{thm:Poincare}
The following Poincar\'{e} inequalities hold for $m=n+1$: 
\begin{equation} \label{equ:Poincare}
\begin{aligned}
\|v_h\|_{m,h} &\lesssim |v_h|_{m,h} \qquad \forall v_h \in
V_{h0}^{(n+1,n)}, \\
\|v_h\|_{m,h}^2 &\lesssim |v_h|_{m,h}^2 + \sum_{|\balpha|<m}
\left( \int_\Omega \partial_h^\balpha v_h\right)^2 \qquad \forall v_h
\in V_h^{(n+1,n)}.
\end{aligned}
\end{equation}
\end{theorem}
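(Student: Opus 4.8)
The plan is to derive both Poincar\'e inequalities from the weak-approximation estimate in Lemma~\ref{lem:H1-weak-approximation}, which replaces each piecewise derivative $\partial_h^\balpha v_h$ by a genuinely $H^1$ (respectively $H_0^1$) function $v_\balpha$ whose broken-norm distance from $\partial_h^\balpha v_h$ is controlled by $h^{m-|\balpha|}|v_h|_{m,h}$. The key idea is that on the conforming functions $v_\balpha$ one may apply the \emph{continuous} Poincar\'e and Poincar\'e--Friedrichs inequalities, and then transfer these bounds back to the nonconforming $\partial_h^\balpha v_h$ using Lemma~\ref{lem:H1-weak-approximation}, paying only a lower-order error that can be absorbed for $h$ small.

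First I would fix a multi-index $\balpha$ with $|\balpha|<m$ and, in the homogeneous case $v_h\in V_{h0}^{(n+1,n)}$, set $v_\balpha:=\Pi_{h0}^{p,m+1-|\balpha|}(\partial^\balpha_h v_h)\in H_0^1(\Omega)$. Since $v_\balpha\in H_0^1(\Omega)$, the standard Poincar\'e--Friedrichs inequality gives $\|v_\balpha\|_{0,\Omega}\lesssim |v_\balpha|_{1,\Omega}$, and iterating this (or using it together with the triangle inequality against the approximation error) lets me bound $\|\partial_h^\balpha v_h\|_{0,h}$. The scheme is a downward induction on $|\balpha|$: for $|\balpha|=m-1$ the gradient of $\partial_h^\balpha v_h$ is an $|\balpha|{+}1=m$ order quantity, hence controlled by $|v_h|_{m,h}$; the approximation error $|\partial_h^\balpha v_h-v_\balpha|_{0,h}\lesssim h^{m-|\balpha|}|v_h|_{m,h}$ from \eqref{equ:H1-weak-approximation} is of the same or higher order, so summing the contributions yields $\|v_h\|_{m,h}\lesssim |v_h|_{m,h}$. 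Concretely I would write
\[
\|\partial_h^\balpha v_h\|_{0,h}\le \|\partial_h^\balpha v_h-v_\balpha\|_{0,h}+\|v_\balpha\|_{0,\Omega}\lesssim h^{m-|\balpha|}|v_h|_{m,h}+|v_\balpha|_{1,\Omega},
\]
and then bound $|v_\balpha|_{1,\Omega}$ by $|\partial_h^\balpha v_h|_{1,h}$ plus another approximation error, reducing the estimate to derivatives of order $|\balpha|+1$; summing over all $|\balpha|<m$ closes the induction and gives the first inequality.

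For the second (non-homogeneous) inequality I would use instead $v_\balpha:=\Pi_h^{p,m+1-|\balpha|}(\partial^\balpha_h v_h)\in H^1(\Omega)$ and invoke the Poincar\'e inequality with mean value: $\|w-\bar w\|_{0,\Omega}\lesssim |w|_{1,\Omega}$ where $\bar w=\frac{1}{|\Omega|}\int_\Omega w$. This produces exactly the extra averaged terms $\left(\int_\Omega \partial_h^\balpha v_h\right)^2$ appearing on the right-hand side of \eqref{equ:Poincare}, after I relate $\int_\Omega v_\balpha$ to $\int_\Omega \partial_h^\balpha v_h$ (again up to the Lemma~\ref{lem:H1-weak-approximation} error, which integrates against $1$ to something of order $h^{m-|\balpha|}$ times $|v_h|_{m,h}$). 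The same downward induction on $|\balpha|$ then bounds each $\|\partial_h^\balpha v_h\|_{0,h}^2$ by $|v_h|_{m,h}^2$, the averaged quantities, and strictly higher-order terms.

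The main obstacle I anticipate is bookkeeping the absorption argument: the approximation errors in \eqref{equ:H1-weak-approximation} carry factors $h^{m-|\balpha|}$ that are of the \emph{same} order as the quantity being estimated at the top of the induction, so one must ensure the induction is set up so that each error is strictly subordinate to a term already controlled (either $|v_h|_{m,h}$ or a higher-order derivative handled at an earlier induction step), rather than circularly re-estimated. A clean way to avoid a hidden $h$-dependence in the constant is to argue by contradiction with a compactness/scaling argument on a fixed reference configuration, but here the direct inductive route through Lemma~\ref{lem:H1-weak-approximation} suffices because every error term is $h^{m-|\balpha|}|v_h|_{m,h}$ with $m-|\balpha|\ge 1$, hence bounded by $|v_h|_{m,h}$ uniformly in $h\le 1$. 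I expect the argument to parallel closely the corresponding Poincar\'e estimate in \cite{wang2013minimal}, so I would cite that proof for the routine parts and focus the writing on verifying that the enriched shape space $P_T^{(n+1,n)}$ changes nothing in the weak-continuity mechanism that the inequalities rely on.
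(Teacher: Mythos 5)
Your proposal is correct and takes essentially the same route as the paper: the paper's proof is simply a citation of Theorem~3.1 in \cite{wang2013minimal}, and that cited argument is precisely what you reconstruct---apply the continuous Poincar\'e(--Friedrichs) inequality to the conforming approximants $v_\balpha$ supplied by Lemma~\ref{lem:H1-weak-approximation}, transfer back to $\partial_h^\balpha v_h$ by the triangle inequality, and close the estimate by downward induction on $|\balpha|$, with the mean-value version producing the terms $\bigl(\int_\Omega \partial_h^\balpha v_h\bigr)^2$ for $V_h^{(n+1,n)}$. No gap to report.
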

\begin{proof}
The proof can be found in \cite[Theorem 3.1]{wang2013minimal}.
\end{proof}

%% end of file %%

%%%%%%%%%%%%%%%%%%%%%%%%%%%%%%%%%%%%%%%%%%%%%%%%%%%%%%%%%%%%
%% Error estimate 
%%%%%%%%%%%%%%%%%%%%%%%%%%%%%%%%%%%%%%%%%%%%%%%%%%%%%%%%%%%%
\section{Convergence analysis and error estimate} \label{sec:error-estimate}
In this section, we give the convergence analysis of the new
nonconforming finite elements as well as the error estimate under the
broken $H^m$ norm when $m=n+1$. The analysis in some sense is
standard. 

For simplicity, we establish the convergence analysis and error
estimate on the $m$-harmonic equations with homogeneous boundary
conditions: 
\begin{equation} \label{equ:m-harmonic}
\left\{
\begin{aligned}
(-\Delta)^m u &= f \qquad \mbox{in }\Omega, \\
\frac{\partial^k u}{\partial \nu^k} &= 0 \qquad \mbox{on }\partial
\Omega, \quad 0 \leq k \leq m-1.
\end{aligned}
\right.
\end{equation}
The variational problem of \eqref{equ:m-harmonic} can be written as
follows: Find $u \in H_0^m(\Omega)$, such that  
$$ 
a(u,v) = (f,v) \qquad \forall v \in H_0^m(\Omega),
$$ 
where 
\begin{equation} \label{equ:bilinear-form}
a(v,w) := (\nabla^m v, \nabla^m w) = \int_\Omega \sum_{|\balpha|=m}
\partial^\balpha v \partial^\balpha w 
  \qquad \forall v,w\in H^m(\Omega).
\end{equation}

We denote $V_h = V_{h0}^{(m,n)}$ as the nonconforming approximation of
$H_0^{m}(\Omega)$, where $V_{h0}^{(m,n)}$ stands for the new
nonconforming finite elements where $m=n+1$. Then, the nonconforming
finite element method for problem \eqref{equ:m-harmonic} is to find
$u_h \in V_h$, such that 
\begin{equation} \label{equ:nonconforming-FEM}
a_h(u_h, v_h) = (f, v_h) \qquad \forall v_h \in V_h.
\end{equation}
Here, the broken bilinear form $a_h(\cdot, \cdot)$ is defined as 
$$ 
a_h(v, w) := (\nabla_h^m v, \nabla_h^m w) = \sum_{T\in \mathcal{T}_h}
\int_T \sum_{|\balpha|=m} \partial^\balpha v \partial^\balpha w 
\qquad \forall v,w\in H^m(\Omega) + V_h.
$$ 
Given $|\balpha| = m$, it can be written as $\balpha = \sum_{i=1}^m
\be_{j_{\balpha, i}}$, where $\be_i ~(i=1,\cdots, n)$ are the unit
vectors in $\mathbb{R}^n$. We also set $ \balpha_{(k)} = \sum_{i=1}^k
\be_{j_{\balpha,i}}$.

From Theorem \ref{thm:Poincare}, the bilinear form $a_h(\cdot, \cdot)$
is uniformly $V_h$-elliptic. For the consistent condition, we apply the
generalized patch test proposed in \cite{stummel1979generalized} to
obtain the following theorem. Other sufficient conditions that are
easier to achieve can also be used, such as the patch test
\cite{bazeley1965triangular, irons1972experience,
veubeke1974variational, wang2001necessity}, the weak patch test
\cite{wang2001necessity}, and the F-E-M test \cite{shi1987fem,
hu2014new}. 

\begin{theorem} \label{thm:convergence}
For any $f\in L^2(\Omega)$, the solution $u_h$ of problem
\eqref{equ:nonconforming-FEM} converges to the solution of
\eqref{equ:m-harmonic} when $m = n+1$: 
$$ 
\lim_{h \to 0} \|u-u_h\|_{m,h} = 0.
$$ 
\end{theorem}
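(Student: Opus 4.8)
The plan is to apply the second Strang lemma. Since $a_h(\cdot,\cdot)$ is clearly bounded on $(H^m(\Omega)+V_h)\times(H^m(\Omega)+V_h)$ and, by Theorem~\ref{thm:Poincare}, uniformly $V_h$-elliptic, the discrete problem \eqref{equ:nonconforming-FEM} is well-posed, so $u_h$ exists and is unique, and the nonconforming error estimate gives
\begin{equation*}
\|u - u_h\|_{m,h} \lesssim \inf_{v_h \in V_h} \|u - v_h\|_{m,h} + \sup_{0 \neq w_h \in V_h} \frac{|a_h(u, w_h) - (f, w_h)|}{\|w_h\|_{m,h}}.
\end{equation*}
It then suffices to prove that each term on the right tends to $0$ as $h \to 0$. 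For the first (approximation) term, since $f \in L^2(\Omega)$ yields $u \in H_0^m(\Omega) \subset H^m(\Omega)$, I would take $v_h = \Pi_h u$ and invoke the approximability \eqref{equ:approximability} of Theorem~\ref{thm:interpolation-global}, so that $\inf_{v_h \in V_h}\|u - v_h\|_{m,h} \le \|u - \Pi_h u\|_{m,h} \to 0$. Only $H^m$-regularity of $u$ is used, which is all that $f \in L^2(\Omega)$ provides.

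The substance of the proof is the second (consistency) term $E_h(w_h) := a_h(u, w_h) - (f, w_h)$. Using $(-\Delta)^m u = f$ and integrating by parts element by element, $E_h(w_h)$ is expressed as a finite sum, over faces $F \in \mathcal{F}_h$ and multi-indices $\balpha$ with $|\balpha| < m$, of jump integrals of the form $\int_F (\text{trace of a derivative of } u)\,\llbracket \partial_h^{\balpha} w_h \rrbracket$, with the corresponding one-sided contributions on boundary faces $F \subset \partial\Omega$. I would then verify the generalized patch test of Stummel \cite{stummel1979generalized}, which reduces convergence to showing that, for smooth trace coefficients, these face-jump forms vanish in the limit over sequences $\{w_h\}$ bounded in $\|\cdot\|_{m,h}$. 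The decisive ingredient is the weak continuity of $V_h$ recorded in Lemmas~\ref{lem:weak-continuity} and \ref{lem:face-average}: the vanishing face-averages $\int_F \llbracket \partial_h^{\balpha} w_h\rrbracket = 0$ (and the weak zero-boundary condition \eqref{equ:face-average0} for $w_h \in V_{h0}^{(n+1,n)}$) allow me to subtract from each smooth coefficient its mean over $F$. A Poincaré inequality on $F$ then supplies a factor $O(h)$, while a trace-and-scaling estimate bounds the jump by $C\|w_h\|_{m,h}$, so that the smooth part of the consistency functional decays and the patch-test hypothesis is met.

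The hard part is the low regularity. Because $f \in L^2(\Omega)$ guarantees only $u \in H_0^m(\Omega)$, the element-wise integration by parts above—which formally requires traces of derivatives of $u$ of order up to $2m-1$—is not literally admissible for $u$ itself. This is exactly the situation the generalized patch test is designed to handle: the trace coefficients are first replaced by smooth approximants, so that the $O(h)$ decay is extracted from the smoothness of these approximants via the mean-subtraction argument rather than from any extra regularity of $u$, and the approximation defect is absorbed through the uniform bound $\sup_h\|w_h\|_{m,h}<\infty$ together with the density of smooth functions in $H^m$. I expect the principal obstacle to be organizing this density/approximation step so that the weak-continuity cancellation applies to every jump term, uniformly over the finitely many orders $|\balpha| < m$ and over both interior and boundary faces, with $h$-independent constants. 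Once the generalized patch test is verified in this way, the consistency term tends to $0$, and combining it with the approximation estimate of the first paragraph yields $\lim_{h\to 0}\|u - u_h\|_{m,h} = 0$.
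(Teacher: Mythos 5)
Your proposal takes essentially the same route as the paper: the paper's proof likewise combines the uniform $V_h$-ellipticity from Theorem~\ref{thm:Poincare}, the approximability \eqref{equ:approximability} of Theorem~\ref{thm:interpolation-global}, and a verification of Stummel's generalized patch test for the consistency term using the weak continuity and Lemma~\ref{lem:face-average}, deferring the details to Theorem~3.2 of \cite{wang2013minimal}. Your mean-subtraction and density arguments for handling the low regularity of $u$ are precisely the content of that cited verification, so the proposal is a correct, more detailed rendering of the paper's own (sketched) proof.
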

\begin{proof}
The approximability of $V_h$ is given in Theorem
\ref{thm:interpolation-global}, and the consistent condition can be
verified similar to the Theorem 3.2 in \cite{wang2013minimal} thanks
to the weak continuity and Lemma \ref{lem:face-average}.
\end{proof}

Based on the Strang's Lemma, we have 
\begin{equation} \label{equ:Strang}
|u - u_h|_{m,h} \lesssim \inf_{v_h \in V_h} |u - v_h|_{m,h} + \sup_{v_h\in
V_h} \frac{|a_h(u, v_h) - (f, v_h)|}{|v_h|_{m,h}}.
\end{equation}
The first term on the right-hand side is the approximation error term,
which can be estimated by Theorem \ref{thm:interpolation-global}.
Next, we consider the estimate for the consistent error term.  

%% estimate extra regularity %%%
\subsection{Error estimate under the extra regularity assumption} In
this subsection, we present the error estimate of the nonconforming
finite element \eqref{equ:nonconforming-FEM} under the extra
regularity assumption, namely $u\in H^{2m-1}(\Omega)$ when
$m=n+1$. We have the following lemma. 

\begin{lemma} \label{lem:nonconforming-regularity}
If $u \in H^{2m-1}(\Omega)$ and $f \in
L^2(\Omega)$, then 
\begin{equation} \label{equ:nonconforming-regularity}
\sup_{v_h \in V_h} \frac{|a_h(u, v_h) - (f, v_h)|}{|v_h|_{m,h}}
\lesssim \sum_{k=1}^{m-1} h^k |u|_{m+k} + h^m \|f\|_0. 
\end{equation}
\end{lemma}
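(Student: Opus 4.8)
The plan is to estimate the consistency error term
\[
a_h(u,v_h) - (f,v_h) = \sum_{T\in\mathcal{T}_h}\int_T \nabla^m u : \nabla^m v_h - \int_\Omega f\,v_h
\]
by integrating by parts element-by-element and exploiting the weak continuity from Lemma \ref{lem:weak-continuity} together with the face-average identities in Lemma \ref{lem:face-average}. Since $u$ solves $(-\Delta)^m u = f$, after $m$-fold integration by parts on each element the volume integrals recombine to give $\int_\Omega (-\Delta)^m u\, v_h = (f,v_h)$, cancelling the load term; what survives is a sum of boundary integrals over faces $F\in\mathcal{F}_h$ involving derivatives of $u$ of orders $m,\ldots,2m-1$ paired against traces of derivatives of $v_h$ of orders $0,\ldots,m-1$. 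The task reduces to bounding this collection of face terms.

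The central mechanism, which I would invoke face by face, is that each boundary contribution has the schematic form $\int_F (\partial^{\gamma} u)\,(\partial_h^{\balpha} v_h)$ with $|\gamma|$ between $m$ and $2m-1$ and $|\balpha| = 2m-1-|\gamma| < m$. Because $v_h\in V_h$ the normal-derivative-averaged quantities of $v_h$ match across interior faces and vanish on boundary faces (Lemma \ref{lem:face-average}), so I may subtract, against each face term, the $F$-average of the corresponding derivative of $u$ — more precisely replace $\partial^\gamma u$ by $\partial^\gamma u$ minus a suitable piecewise polynomial of the right degree — without changing the value of the sum. First I would split each term so that the $v_h$ factor is tested against the fluctuation of $u$'s derivative about its face-average (or about a best polynomial approximation of degree $m-1-|\balpha|$ tangentially), and likewise subtract the face-average of the $v_h$-derivative, which is legitimate precisely because of the weak continuity. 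This converts each face term into a product of two quantities each of which has vanishing average of the relevant order, so both can be controlled by local approximation/trace estimates.

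Then I would apply, on each element $T$ with a face $F$, the trace inequality combined with the Bramble–Hilbert lemma: the $u$-fluctuation factor contributes a factor $h_T^{k+1/2}|u|_{m+k,T}$ when the derivative of $u$ has order $m+k-1$ (so that $|\balpha|=m-k$ and the polynomial subtraction has degree $k-1$), while the averaged $v_h$-factor contributes $h_T^{-1/2}$ times a seminorm $|v_h|_{m,T}$ that arises after using the weak-continuity cancellation to bound the $v_h$ fluctuation by $h_T^{1/2}$ times its top-order seminorm. Multiplying and using Cauchy--Schwarz over all faces and elements yields, term by term, a bound of order $h^{k}|u|_{m+k}$ for $k=1,\dots,m-1$, which sum to the first group on the right of \eqref{equ:nonconforming-regularity}. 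The final $h^m\|f\|_0$ term comes from the lowest-order piece, where the $v_h$-factor is $v_h$ itself tested against $f$ minus its piecewise average, using that $\int_\Omega f\,(v_h - \Pi^{p,1}_{h0}v_h)$ can be estimated by $h^m\|f\|_0|v_h|_{m,h}$ via the $H^1$-weak-approximation estimate of Lemma \ref{lem:H1-weak-approximation}.

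The main obstacle, and the step demanding the most care, is the bookkeeping of which tangential/normal polynomial degree to subtract at each face so that the weak-continuity hypothesis (which only guarantees matching of \emph{averages} of normal derivatives of a single fixed order per codimension, not of full traces) is genuinely applicable; one must verify that the subtracted polynomial is exactly of the degree whose face-integral is controlled by Lemma \ref{lem:face-average}, and that the leftover fluctuation of $v_h$ still admits an $O(h_T^{1/2})$ trace bound against $|v_h|_{m,T}$. Once the pairing of orders $(|\gamma|,|\balpha|)$ is correctly matched to the available cancellation, the remaining estimates are routine scaling/Bramble--Hilbert arguments, and I expect the proof to parallel closely the consistency-error analysis of \cite{wang2013minimal} for the case $m\le n$.
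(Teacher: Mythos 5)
Your plan starts the same way as the paper's proof (element-wise integration by parts plus weak-continuity cancellation), but the central cancellation mechanism you invoke is not available at the intermediate derivative orders, and this is a genuine gap. After $m$-fold element-wise integration by parts you must handle face terms of the form $\int_F \partial^{\gamma}u\,\llbracket \partial_h^{\bbeta} v_h\rrbracket$ with $|\bbeta|=0,1,\dots,m-1$. Lemma \ref{lem:face-average} gives matching face averages of $\partial_h^{\bbeta}v_h$ across a codimension-one face $F$ only for $|\bbeta|=m-1$ (and orders $0,1$ at vertices); for $|\bbeta|\le m-2$ the jump $\llbracket \partial_h^{\bbeta}v_h\rrbracket$ has no zero-mean property on $F$ --- Lemma \ref{lem:weak-continuity} only guarantees it vanishes at a single point --- so your step ``subtract the $F$-average of the corresponding derivative of $u$ \dots without changing the value of the sum'' is unjustified precisely for those terms. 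Your scaling bookkeeping is also inconsistent: subtracting a polynomial of degree $k-1$ from $\partial^{\gamma}u$ with $|\gamma|=m+k-1$ yields, by Bramble--Hilbert and the trace inequality, a fluctuation of size $h^{k-1/2}|u|_{m+2k-1,T}$, not $h^{k+1/2}|u|_{m+k,T}$; the seminorm $|u|_{m+2k-1}$ exceeds the assumed $H^{2m-1}$ regularity as soon as $k>m/2$. Paying only one extra derivative of $u$ (to stay at $|u|_{m+k}$) lets you subtract at most the mean and gain only $h^{1/2}$, so the missing factor $h^{k-1/2}$ would have to come from the jump of $\partial_h^{m-k}v_h$ --- which, again, cannot be controlled through face averages.

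The paper avoids both problems by integrating by parts element-wise only once. The resulting face term ($E_1$ in the paper) pairs $\partial^{\balpha}u$ with derivatives of $v_h$ of order exactly $m-1$, where the face-average cancellation of Lemma \ref{lem:face-average} is legitimate, giving $h|u|_{m+1}|v_h|_{m,h}$. All lower-order pairings ($E_2$ and $E_3$) are treated differently: $\partial_h^{\bbeta}v_h$ is replaced by the globally continuous piecewise polynomial $v_{\bbeta}\in H_0^1(\Omega)$ of Lemma \ref{lem:H1-weak-approximation}, so the subsequent integrations by parts produce no interface terms at all, and the errors become volume terms bounded via $|\partial_h^{\bbeta}v_h - v_{\bbeta}|_{j,h}\lesssim h^{m-|\bbeta|-j}|v_h|_{m,h}$; the $h^m\|f\|_0$ contribution arises exactly as you describe, from $f$ tested against $v_h-v_0$. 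If you wish to salvage your purely face-based route, you would need an additional recursive jump estimate --- combining point-vanishing of the jumps at every order $\le m-2$ with the zero-mean property at order $m-1$ to prove $\|\llbracket\partial_h^{m-k}v_h\rrbracket\|_{0,F}\lesssim h^{k-1/2}|v_h|_{m,\omega_F}$ --- and that argument is not supplied in your proposal.
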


\begin{proof}
The proof follows the same argument in \cite[Lemma
3.2]{wang2013minimal}, so we only sketch the main points. First, we
have 
$$ 
\begin{aligned}
a_h(u, v_h) - (f, v_h) &= \sum_{T\in \mathcal{T}_h} \int_T \left(
\sum_{|\balpha| = m} \partial^\balpha u \partial^\balpha v_h \right) -
(f, v_h) \\
& = \sum_{|\balpha| = m} \sum_{T\in \mathcal{T}_h} \int_T
\partial^\balpha u \partial^\balpha v_h - (-1)^m(\partial^{2\balpha}u) v_h :=
E_1 + E_2 + E_3,
\end{aligned}
$$ 
where 
$$ 
\begin{aligned}
E_1 &:= \sum_{|\balpha| = m} \sum_{T\in \mathcal{T}_h} \int_T 
\partial^\balpha u \partial^\balpha v_h + \partial^{\balpha+\balpha_{(1)}}
u \partial^{\balpha - \balpha_{(1)}} v_h, \\
E_2 &:= \sum_{k=1}^{m-2} (-1)^k \sum_{|\balpha| = m} \sum_{T\in
  \mathcal{T}_h} \int_T \partial^{\balpha + \balpha_{(k)}} u
  \partial^{\balpha - \balpha_{(k)}} v_h + \partial^{\balpha +
    \balpha_{(k+1)}}u \partial^{\balpha - \balpha_{(k+1)}} v_h, \\ 
E_3 &:= (-1)^{m-1}\sum_{|\balpha|=m} \sum_{T \in \mathcal{T}_h} \int_T
\partial^{2\balpha - \be_{j_{\balpha,m}}} u \partial^{\be_{j_{\balpha,m}}}
v_h + (\partial^{2\balpha} u) v_h.
\end{aligned}
$$ 
By Lemma \ref{lem:face-average} and Green's formula, we have   
$$ 
\begin{aligned}
E_1 &= \sum_{|\balpha|=m} \sum_{T \in \mathcal{T}_h} \int_{\partial T}
\partial^\balpha u \partial^{\balpha - \be_{j_{\balpha,1}}} v_h
\nu_{j_{\balpha},1} \\
&= \sum_{|\balpha|=m} \sum_{T\in \mathcal{T}_h} \sum_{F\subset \partial
T} \int_F \left(\partial^\balpha u - P_F^0 \partial^\balpha u \right)
(\partial_h^{\balpha - \be_{j_{\balpha,1}}}v_h - P_F^0 \partial_h^{\balpha -
 \be_{j_{\balpha,1}}}v_h)\nu_{j_{\balpha},1},
\end{aligned}
$$ 
where $P_F^0: L^2(F) \mapsto \mathcal{P}_0(F)$ is the orthogonal
projection, $\nu = (\nu_1, \cdots, \nu_n)$ is the unit outer normal to
$\partial T$. Using the Schwarz inequality and the interpolation theory, we
obtain 
\begin{equation} \label{equ:regularity-E1}
|E_1| \lesssim h|u|_{m+1} |v_h|_{m,h}.
\end{equation} 

When $m>1$, let $v_\bbeta \in H_0^1(\Omega)$ be the piecewise
polynomial as in Lemma \ref{lem:H1-weak-approximation}. Then, 
Green's formula leads to 
$$ 
\begin{aligned}
E_2 &= \sum_{k=1}^{m-2} (-1)^k \sum_{|\balpha| = m} \sum_{T\in
  \mathcal{T}_h} \int_T \partial^{\balpha + \balpha_{(k)}} u
  \partial^{\be_{j_{\balpha},k+1}} (\partial^{\balpha-\balpha_{(k+1)}}v_h
   - v_{\balpha-\balpha_{(k+1)}}) \\
    &+ \sum_{k=1}^{m-2} (-1)^k \sum_{|\balpha| = m} \sum_{T\in
  \mathcal{T}_h} \int_T \partial^{\balpha + \balpha_{(k+1)}}u  (
      \partial^{\balpha - \balpha_{(k+1)}} v_h -
      v_{\balpha-\balpha_{(k+1)}} ), \\ 
\end{aligned}
$$ 
which implies 
\begin{equation} \label{equ:regularity-E2}
|E_2| \lesssim \sum_{k=1}^{m-2} h^k|u|_{m+k}|v_h|_{m,h} +
h^{k+1}|u|_{m+k+1} |v_h|_{m,h}.
\end{equation}
Finally, we have 
$$
\begin{aligned}
E_3 &= (-1)^{m-1}\sum_{|\balpha|=m} \sum_{T \in \mathcal{T}_h} \int_T
\partial^{2\balpha - \be_{j_{\balpha,m}}} u \partial^{\be_{j_{\balpha,m}}}
(v_h-v_0) + (\partial^{2\balpha} u) (v_h-v_0),
\end{aligned}
$$
which gives 
\begin{equation} \label{equ:regularity-E3} 
|E_3| \lesssim h^{m-1}|u|_{2m-1}|v_h|_{m,h} + h^m \|f\|_0 |v_h|_{m,h}.
\end{equation} 
By the estimates \eqref{equ:regularity-E1}, \eqref{equ:regularity-E2},
and \eqref{equ:regularity-E3}, we obtain the desired estimate
\eqref{equ:nonconforming-regularity}.
\end{proof}

From Lemma \ref{lem:nonconforming-regularity}, we have the following
theorem.
\begin{theorem} \label{thm:error-regularity}
If $u \in H^{2m-1}(\Omega) \cap H_0^m(\Omega)$
and $f \in L^2(\Omega)$, then 
\begin{equation} \label{equ:error-regularity}
|u - u_h|_{m,h} \lesssim \sum_{k=1}^{m-1} h^k |u|_{m+k} +
h^m \|f\|_0. 
\end{equation}
\end{theorem}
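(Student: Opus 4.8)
The plan is to combine Strang's Lemma with the two estimates that have already been established, so the proof is essentially an assembly step rather than a new calculation. Strang's Lemma, recorded in equation \eqref{equ:Strang}, bounds $|u-u_h|_{m,h}$ by the sum of an approximation error term $\inf_{v_h\in V_h}|u-v_h|_{m,h}$ and a consistency error term $\sup_{v_h\in V_h} |a_h(u,v_h)-(f,v_h)|/|v_h|_{m,h}$. My strategy is to estimate each term separately and then add them.

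For the approximation term, I would simply take $v_h = \Pi_h u$, the global canonical interpolant, which is admissible since $u\in H^{2m-1}(\Omega)\cap H_0^m(\Omega)$ gives $\Pi_h u \in V_{h0}^{(n+1,n)}=V_h$. Applying the global interpolation estimate \eqref{equ:interpolation-global-error} from Theorem \ref{thm:interpolation-global} with the seminorm index $k=m$ and exploiting the extra regularity (for instance with $s=1$, or more directly reading off the local bound of Lemma \ref{lem:interpolation}) yields a bound of the form $h^{s}|u|_{m+s}$; in particular the interpolation error is dominated by the terms $\sum_{k=1}^{m-1}h^k|u|_{m+k}$ already appearing on the right-hand side of \eqref{equ:error-regularity}, so it contributes nothing larger than the target bound. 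For the consistency term, I would invoke Lemma \ref{lem:nonconforming-regularity} verbatim: under exactly the hypotheses $u\in H^{2m-1}(\Omega)$ and $f\in L^2(\Omega)$ it gives
\begin{equation*}
\sup_{v_h\in V_h} \frac{|a_h(u,v_h)-(f,v_h)|}{|v_h|_{m,h}} \lesssim \sum_{k=1}^{m-1} h^k|u|_{m+k} + h^m\|f\|_0.
\end{equation*}

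Substituting both bounds into \eqref{equ:Strang} and collecting terms gives precisely \eqref{equ:error-regularity}. There is no genuine obstacle here, since all the analytical work has been front-loaded into the ellipticity provided by Theorem \ref{thm:Poincare} (needed to justify Strang's Lemma), the approximability of Theorem \ref{thm:interpolation-global}, and the consistency estimate of Lemma \ref{lem:nonconforming-regularity}. The only point requiring a small amount of care is confirming that the approximation term does not exceed the stated right-hand side: one must check that the interpolation bound, when matched against the extra regularity $u\in H^{2m-1}$, produces terms of order at most $h^k|u|_{m+k}$ with $k\le m-1$ (or the comparable $h^m\|f\|_0$ scale), rather than a spurious higher-order seminorm, so that the final estimate reads exactly as in the consistency estimate. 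This is routine once the indices are tracked, so the theorem follows immediately.
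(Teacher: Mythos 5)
Your proposal is correct and is essentially the paper's own argument: the paper derives Theorem \ref{thm:error-regularity} directly from Strang's Lemma \eqref{equ:Strang}, bounding the approximation term via the interpolation estimate (Theorem \ref{thm:interpolation-global} with $s=1$, admissible since $u\in H^{2m-1}(\Omega)\subset H^{m+1}(\Omega)$) and the consistency term via Lemma \ref{lem:nonconforming-regularity}. Your bookkeeping that $\Pi_h u\in V_{h0}^{(n+1,n)}$ and that the interpolation bound $h|u|_{m+1}$ is absorbed into the stated right-hand side is exactly the routine check the paper leaves implicit.
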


\begin{remark}
From the proof of Lemma \ref{lem:nonconforming-regularity}, the error
estimate can be improved in the following cases: 
\begin{enumerate}
\item $n=1, m=2$: If $u \in H^3(\Omega)$, then  
$$ 
|u-u_h|_{2,h} \lesssim h|u|_{3}.
$$ 
\item $n=2, m=3$: We have $V_h \subset H_0^1(\Omega)$, and if $u\in
H^5(\Omega)$, then  
$$ 
|u-u_h|_{3,h} \lesssim h|u|_{4} + h^2|u|_{5}. 
$$ 
\end{enumerate}
\end{remark}

\begin{remark}
In 2D, since the $H^3$ nonconforming finite element space satisfies
$V_h^{(3,2)} \subset H^1(\Omega)$, then $V_h^{(3,2)}$ is robust for
the singularly perturbed problem $-\varepsilon^2\Delta^3 u - \Delta u =
f$. The proof follows a similar technique developed in
\cite{nilssen2001robust} and Lemma \ref{lem:nonconforming-regularity}
and is therefore omitted here. Further, a modified $H^3$ nonconforming
element that converges for both second and fourth order elliptic
problems is given in Section \ref{sec:perturbed-2D}.
\end{remark}

\subsection{Error estimate by conforming relatives}
The error estimate can be improved with minimal regularity under the
following assumption, which is motivated by the
conforming relatives proposed by Brenner (cf. \cite{brenner1996two}). 
\begin{assumption}[Conforming relatives] \label{asm:conforming-relatives}
There exists an $H^m$ conforming finite element space $V_h^c \subset
H_0^m(\Omega)$, and an operator $\Pi_h^c: V_h \mapsto V_h^c$ such that 
\begin{equation} \label{equ:conforming-relatives}
\sum_{j=0}^{m-1} h^{2(j-m)} |v_h - \Pi_h^c v_h|_{j,h}^2 + |\Pi_h^c
v_h|_{m,h}^2 \lesssim |v_h|_{m,h}^2. 
\end{equation} 
\end{assumption}
The above assumption has been verified for the various cases; see
\cite{scott1990finite, brenner2003poincare} for the case in which
$m=1$, \cite{brenner1996two, li2014new} for the Morley element in 2D,
and \cite{hu2014new, hu2016canonical} for arbitrary $m \geq 1$ in 2D. 

Let $\mathcal{P}_0(\mathcal{T}_h)$ be the piecewise constant space on
$\mathcal{T}_h$.  To obtain the quasi-optimal error estimate under 
Assumption \ref{asm:conforming-relatives}, we first define the
piecewise constant projection $P_h^0: L^2(\Omega) \mapsto
\mathcal{P}_0(\mathcal{T}_h)$ as 
\begin{equation} \label{equ:constant-proj}
P_h^0 v|_T := \frac{1}{|T|} \int_T v \qquad \forall T \in \mathcal{T}_h.
\end{equation}
For any $F\in \mathcal{F}_h$, let $\omega_F$ be the union of all
elements that share the face $F$. We further define the average operator
on $\omega_F$ as  
\begin{equation} \label{equ:average-omega}
P_{\omega_F}^0v := \frac{1}{|\omega_F|} \int_{\omega_F} v.
\end{equation}
Following the standard DG notation (cf. \cite{arnold2002unified}),
$\llbracket\cdot \rrbracket$ and $\{\cdot\}$ represent the jump and
average operators, respectively.

\begin{lemma} \label{lem:nonconforming-no-regularity}
Under Assumption \ref{asm:conforming-relatives}, if $f \in
L^2(\Omega)$, then 
\begin{equation} \label{equ:nonconforming-no-regularity}
\begin{aligned}
\sup_{v_h \in V_h} \frac{|a_h(u, v_h) - \langle f, v_h \rangle
|}{|v_h|_{m,h}} &\lesssim \inf_{w_h\in V_h} |u-w_h|_{m,h} +
 h^m \|f\|_0 \\
&+ \sum_{|\balpha|=m} \left(\|\partial^\balpha u - P_h^0
    \partial^\balpha u\|_0 + \sum_{F\in \mathcal{F}_h}
    \|\partial^\balpha u - P_{\omega_F}^0 \partial^\balpha
    u\|_{0,\omega_F} \right). 
\end{aligned}
\end{equation}
\end{lemma}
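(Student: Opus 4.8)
The plan is to exploit Assumption \ref{asm:conforming-relatives} to trade the nonconforming test function for a conforming one wherever $u$ must be tested, and then to measure the remaining mismatch only through the oscillation of the top-order derivatives of $u$. First I would set $w_h := v_h - \Pi_h^c v_h$, so that $v_h = \Pi_h^c v_h + w_h$. Since $\Pi_h^c v_h \in V_h^c \subset H_0^m(\Omega)$ is an admissible test function in the variational form of \eqref{equ:m-harmonic}, we have $a_h(u, \Pi_h^c v_h) = a(u, \Pi_h^c v_h) = \langle f, \Pi_h^c v_h\rangle$, which cancels exactly and reduces the consistency functional to $a_h(u, v_h) - \langle f, v_h\rangle = a_h(u, w_h) - \langle f, w_h\rangle$. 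The whole estimate then rests on the fact that, by \eqref{equ:conforming-relatives}, $w_h$ is small in every sub-top-order norm: $|w_h|_{j,h} \lesssim h^{m-j}|v_h|_{m,h}$ for $0 \le j \le m-1$, while $|w_h|_{m,h}\lesssim |v_h|_{m,h}$.

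The term $\langle f, w_h\rangle$ is immediate: with $j=0$ in \eqref{equ:conforming-relatives} we get $\|w_h\|_0 \lesssim h^m|v_h|_{m,h}$, hence $|\langle f, w_h\rangle| \le \|f\|_0\|w_h\|_0 \lesssim h^m\|f\|_0|v_h|_{m,h}$, which accounts for the $h^m\|f\|_0$ contribution. For $a_h(u, w_h) = \sum_{|\balpha|=m}\sum_{T}\int_T \partial^\balpha u\, \partial^\balpha w_h$ I would insert the piecewise constant projection and split $\partial^\balpha u = P_h^0\partial^\balpha u + (\partial^\balpha u - P_h^0\partial^\balpha u)$. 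The oscillatory remainder is controlled directly by the Schwarz inequality together with $|w_h|_{m,h}\lesssim|v_h|_{m,h}$, giving $\sum_{|\balpha|=m}\|\partial^\balpha u - P_h^0\partial^\balpha u\|_0\, |v_h|_{m,h}$; replacing $u$ by its best $V_h$-approximation in this step, if one prefers to keep the lower-order contributions clean, is what produces the $\inf_{w_h\in V_h}|u-w_h|_{m,h}$ term while leaving the oscillation terms intact.

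The heart of the argument is the piecewise constant part $\sum_{|\balpha|=m}\sum_T (P_h^0\partial^\balpha u)|_T \int_T \partial^\balpha w_h$. Writing $\balpha = \be_{j_{\balpha,1}} + \bbeta$ with $|\bbeta| = m-1$ and integrating by parts once, each volume integral becomes $\int_{\partial T}\partial^\bbeta w_h\, \nu_{j_{\balpha,1}}$, and I would then regroup the sum face by face. The key point is that $|\bbeta| = m-1 = n$, so Lemma \ref{lem:face-average} (with $k=1$) guarantees that the face average $\int_F \partial^\bbeta v_h$ is single-valued across each interior face $F$; since $\Pi_h^c v_h \in H^m(\Omega)$ has continuous derivatives up to order $m-1$, the same holds for $\int_F \partial^\bbeta w_h$. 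Consequently, on each interior face the two element contributions collapse to the single-valued average $\int_F \partial^\bbeta w_h$ multiplied by the jump of the element constant $P_h^0\partial^\balpha u$ across $F$, while on boundary faces the corresponding moments vanish because $v_h \in V_{h0}^{(n+1,n)}$. This jump is exactly where the patch average enters: $\bigl|\,(P_h^0\partial^\balpha u)|_{T^+} - (P_h^0\partial^\balpha u)|_{T^-}\,\bigr| \lesssim |T|^{-1/2}\|\partial^\balpha u - P_{\omega_F}^0\partial^\balpha u\|_{0,\omega_F}$, since both element means differ from $P_{\omega_F}^0\partial^\balpha u$ by a local average of the oscillation. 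Estimating $\int_F\partial^\bbeta w_h$ by a trace inequality and the bounds $|w_h|_{m-1,h}\lesssim h|v_h|_{m,h}$, $|w_h|_{m,h}\lesssim|v_h|_{m,h}$, then summing over faces by the Schwarz inequality, yields the patch oscillation term $\sum_{|\balpha|=m}\sum_{F\in\mathcal{F}_h}\|\partial^\balpha u - P_{\omega_F}^0\partial^\balpha u\|_{0,\omega_F}\,|v_h|_{m,h}$.

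The main obstacle I anticipate is the face-term bookkeeping in this last step: one must verify that the single-valuedness supplied by Lemma \ref{lem:face-average} occurs at precisely the derivative order $m-1$ demanded by a single integration by parts, so that no derivative of $u$ beyond order $m$ ever appears and only the jump of a piecewise constant survives. Closing the $h$-scaling — balancing the $|T|^{-1/2}$ from the constant jump against the trace of $\partial^\bbeta w_h$ and the factors $h^{m-1-j}$ furnished by \eqref{equ:conforming-relatives} — is the delicate quantitative point, and it is what forces the appearance of the local patch oscillation $\|\cdot - P_{\omega_F}^0\cdot\|_{0,\omega_F}$ rather than a term requiring the extra regularity $u\in H^{m+1}$ used in Lemma \ref{lem:nonconforming-regularity}. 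Once these face estimates are in place, collecting the three contributions gives \eqref{equ:nonconforming-no-regularity}.
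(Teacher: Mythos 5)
Your proposal is correct, and it takes a genuinely different route through the core of the argument than the paper does. Both proofs open identically: the conforming relative $\Pi_h^c v_h \in V_h^c \subset H_0^m(\Omega)$ is an admissible test function, so the consistency functional reduces to testing against $v_h - \Pi_h^c v_h$, and the $(f,\cdot)$ term is absorbed via $\|v_h - \Pi_h^c v_h\|_0 \lesssim h^m |v_h|_{m,h}$. From there the paper introduces an \emph{arbitrary discrete} $w_h \in V_h$ approximating $u$, splits off $a_h(u - w_h, v_h - \Pi_h^c v_h)$ (which is the source of the $\inf_{w_h}|u-w_h|_{m,h}$ term in \eqref{equ:nonconforming-no-regularity}), and then handles $a_h(w_h, v_h - \Pi_h^c v_h)$ by adding and subtracting derivatives of order $m+1$ of the piecewise polynomial $w_h$, applying Green's formula, and using the DG jump/average identity; the constants $P_h^0\partial^\balpha u$ and $P_{\omega_F}^0\partial^\balpha u$ are then inserted into the average and jump slots (legitimately, because the relevant jumps and averages of moments vanish by Lemma \ref{lem:face-average} and conformity), and the estimate closes with trace and inverse inequalities plus a final triangle inequality. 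You instead never approximate $u$ by a discrete function: you project $\partial^\balpha u$ onto piecewise constants directly, so the integration by parts of the constant part has no volume remainder, and the interior-face terms collapse exactly to (jump of element constant) $\times$ (single-valued moment $\int_F\partial^{\bbeta}w_h$, $|\bbeta|=m-1$), where single-valuedness is again Lemma \ref{lem:face-average} together with $\Pi_h^c v_h\in H_0^m(\Omega)$; the jump of the constants is bounded by $|T|^{-1/2}\|\partial^\balpha u - P^0_{\omega_F}\partial^\balpha u\|_{0,\omega_F}$ and your $h$-bookkeeping ($|T|^{-1/2}|F|^{1/2}\sim h^{-1/2}$ against the scaled trace inequality and $h^{-1}|w_h|_{m-1,h}+|w_h|_{m,h}\lesssim |v_h|_{m,h}$) closes correctly. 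What your route buys is a cleaner argument (no order-$(m+1)$ derivatives of a discrete function, no inverse inequality, no DG machinery) and in fact a slightly \emph{stronger} conclusion, since the $\inf_{w_h\in V_h}|u-w_h|_{m,h}$ term is never needed; your parenthetical explanation of how that term would arise is loose, but harmless, because dropping a nonnegative term from the right-hand side only sharpens \eqref{equ:nonconforming-no-regularity}. What the paper's route buys is that the oscillation quantities appear measured on $\partial_h^\balpha w_h$ for a discrete $w_h$, which is the form in which such bounds are typically reused in the minimal-regularity literature it cites.
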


\begin{proof}
For any $w_h \in V_h$,
$$ 
\begin{aligned}
a_h(u, v_h) - (f,v_h) &= a_h(u, v_h - \Pi_h^c v_h) - (f, v_h -
\Pi_h^c v_h) \\
&= a_h(u - w_h, v_h - \Pi_h^c v_h) + a_h(w_h, v_h - \Pi_h^c v_h) - (f,
v_h - \Pi_h^c v_h)
\end{aligned}
$$ 
For the first and third terms, we have 
\begin{equation} \label{equ:no-regularity1} 
\begin{aligned}
|a_h(u-w_h, v_h - \Pi_h^c v_h)| &\lesssim |u-w_h|_{m,h}|v_h - \Pi_h^c
v_h|_{m,h} \lesssim |u-w_h|_{m,h} |v_h|_{m,h}, \\
|(f, v_h - \Pi_h^c v_h)| & \lesssim \|f\|_{0} \|v_h - \Pi_h^c
v_h\|_0 \lesssim h^m\|f\|_{0} |v_h|_{m,h}.
\end{aligned}
\end{equation}
Next, we estimate the second term. First,
$$ 
a_h(w_h, v_h - \Pi_h^c v_h) = \sum_{|\balpha|=m}\sum_{T\in
  \mathcal{T}_h} \int_T \partial^\balpha w_h \partial^\balpha (v_h -
      \Pi_h^c v_h) := E_1 + E_2, 
$$ 
where
$$ 
\begin{aligned}
E_1 &:= \sum_{|\balpha| = m} \sum_{T\in \mathcal{T}_h} \int_T 
\partial^\balpha w_h \partial^\balpha (v_h-\Pi_h^cv_h) +
\partial^{\balpha+\be_{j_{\balpha},1}} w_h \partial^{\balpha -
  \be_{j_{\balpha},1}}
(v_h - \Pi_h^c v_h), \\
E_2 &:= -\sum_{|\balpha| = m} \sum_{T\in
  \mathcal{T}_h} \int_T \partial^{\balpha + \be_{j_\balpha,1}}w_h
  \partial^{\balpha - \be_{j_\balpha,1}} (v_h-\Pi_h^c v_h). 
\end{aligned}
$$ 
By the Lemma \ref{lem:face-average} and Green's formula, we have   
$$ 
\begin{aligned}
E_1 &= \sum_{|\balpha|=m} \sum_{T \in \mathcal{T}_h} \int_{\partial T}
\partial_h^\balpha w_h \partial_h^{\balpha - \be_{j_\balpha,1}} (v_h -
\Pi_h^c v_h) \nu_{j_{\balpha,1}}\\
&= \sum_{|\balpha|=m} \sum_{F\in \mathcal{F}_h} \int_F
\{\partial_h^\balpha w_h\}\llbracket \partial_h^{\balpha - \be_{j_\balpha,1}} (v_h -
    \Pi_h^c v_h) \rrbracket_{j_{\balpha,1}} \\
&+ \sum_{|\balpha|=m} \sum_{F\in \mathcal{F}_h^i} \int_F
\llbracket\partial_h^\balpha w_h\rrbracket_{j_{\balpha,1}} 
\{\partial_h^{\balpha-\be_{j_\balpha,1}} (v_h - \Pi_h^c v_h)\} \qquad
(\text{Eq. (3.3) in \cite{arnold2002unified}})\\
&= \sum_{|\balpha|=m} \sum_{F\in \mathcal{F}_h} \int_F
\{\partial_h^\balpha w_h - P_h^0 \partial^\balpha u\}\llbracket
\partial_h^{\balpha - \be_{j_\balpha,1}} (v_h - \Pi_h^c
    v_h)\rrbracket_{j_{\balpha,1}} \\
&+ \sum_{|\balpha|=m} \sum_{F\in \mathcal{F}_h^i} \int_F \llbracket \partial_h^\balpha
w_h - P_{\omega_F}^0 \partial^\balpha u\rrbracket_{j_{\balpha,1}}
\{\partial_h^{\balpha-\be_{j_\balpha,1}} (v_h - \Pi_h^c v_h)\}.
\end{aligned}
$$ 
Therefore, it follows from the trace inequality and inverse inequality
that 
\begin{equation}\label{equ:no-regularity-E1}
\begin{aligned}
|E_1| &\lesssim \sum_{|\balpha|=m} \sum_{F\in \mathcal{F}_h} h_F^{-1}
\|\partial_h^\balpha w_h - P_h^0 \partial^\balpha u\|_{0,\omega_F} |v_h
- \Pi_h^c v_h|_{m-1,h} \\
&+ \sum_{|\balpha|=m} \sum_{F\in \mathcal{F}_h}h_F^{-1} 
\|\partial_h^\balpha w_h - P_{\omega_F}^0 \partial^\balpha
u\|_{0,\omega_F} |v_h - \Pi_h^c v_h|_{m-1,h} \\
&\lesssim \sum_{|\balpha|=m} \left(\|\partial_h^\balpha w_h - P_h^0
\partial^\balpha u\|_0 + \sum_{F\in \mathcal{F}_h} \|\partial_h^\balpha w_h
- P_{\omega_F}^0 \partial^\balpha u\|_{0,\omega_F} 
\right)|v_h|_{m,h}.
\end{aligned}
\end{equation}

For the estimate of $E_2$, we obtain
$$ 
\begin{aligned}
E_2 &= -\sum_{|\balpha| = m} \sum_{T\in \mathcal{T}_h} \int_T
\partial^{\be_{j_\alpha,1}}(\partial^{\balpha}w_h - P_h^0
  \partial^{\balpha} u) \partial^{\balpha - \be_{j_\alpha,1}}(v_h -
    \Pi_h^c v_h), 
\end{aligned}
$$
which gives 
\begin{equation} \label{equ:no-regularity-E2} 
\begin{aligned}
|E_2| &\lesssim \sum_{|\balpha|=m} h^{-1}
\|\partial_h^\balpha w_h - P_h^0 \partial^\balpha u\|_0 |v_h - \Pi_h^c
v_h|_{m-1,h} \\
&\lesssim \sum_{|\balpha|=m} \|\partial_h^\balpha w_h -
P_h^0\partial^\balpha u\|_0 |v_h|_{m,h}. 
\end{aligned}
\end{equation} 
We therefore complete the proof by \eqref{equ:no-regularity1},
\eqref{equ:no-regularity-E1}, \eqref{equ:no-regularity-E2}, and the
triangle inequality.
\end{proof}

From Lemma \ref{lem:nonconforming-no-regularity}, we have the following
theorem.
\begin{theorem} \label{thm:error-no-regularity}
Under Assumption \ref{asm:conforming-relatives}, if $f \in
L^2(\Omega)$ and $u \in H^{m+t}(\Omega)$, then 
\begin{equation}\label{equ:error-no-regularity}
|u - u_h|_{m,h} \lesssim h^{s}|u|_{m+s} + h^m \|f\|_0,
\end{equation}
where $s = \min\{1,t\}$.
\end{theorem}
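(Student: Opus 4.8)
The plan is to read off the estimate directly from Strang's inequality \eqref{equ:Strang} by bounding its two terms separately. For the approximation (first) term I would take $v_h = \Pi_h u$ and invoke the global interpolation estimate \eqref{equ:interpolation-global-error} of Theorem \ref{thm:interpolation-global}; since $u \in H^{m+t}(\Omega)$ and \eqref{equ:interpolation-error} is valid only for regularity exponents in $[0,1]$, this yields $\inf_{v_h\in V_h}|u-v_h|_{m,h}\lesssim h^{s}|u|_{m+s}$ with $s=\min\{1,t\}$. The cap at $1$ is exactly what forces the appearance of $s=\min\{1,t\}$ rather than $t$ itself.

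For the consistency (second) term the work has already been localized by Lemma \ref{lem:nonconforming-no-regularity}, so it remains only to bound the three groups on the right-hand side of \eqref{equ:nonconforming-no-regularity}. The infimum $\inf_{w_h\in V_h}|u-w_h|_{m,h}$ is again an approximation error, handled exactly as above by $\Pi_h u$ and contributing $h^{s}|u|_{m+s}$. The term $h^m\|f\|_0$ is already in the desired form and needs nothing further. Thus everything reduces to estimating the two families of constant-projection residuals $\|\partial^\balpha u - P_h^0\partial^\balpha u\|_0$ and $\sum_{F\in \mathcal{F}_h}\|\partial^\balpha u - P_{\omega_F}^0\partial^\balpha u\|_{0,\omega_F}$ for $|\balpha|=m$.

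These residuals are where the genuine analysis lies, and I would treat them by a standard Bramble--Hilbert / Deny--Lions argument combined with scaling. For a fixed multi-index with $|\balpha|=m$ the function $\partial^\balpha u$ lies in $H^{t}(\Omega)$, so on each element $T$ the $L^2$-orthogonal constant $P_h^0\partial^\balpha u$ satisfies $\|\partial^\balpha u - P_h^0\partial^\balpha u\|_{0,T}\lesssim h_T^{s}|\partial^\balpha u|_{s,T}$; summing the squares over $T$ gives $\|\partial^\balpha u - P_h^0\partial^\balpha u\|_0\lesssim h^{s}|\partial^\balpha u|_{s}\lesssim h^{s}|u|_{m+s}$. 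The patch averages $P_{\omega_F}^0\partial^\balpha u$ are handled identically on each patch $\omega_F$ (of diameter $\sim h$), giving $\|\partial^\balpha u - P_{\omega_F}^0\partial^\balpha u\|_{0,\omega_F}\lesssim h^{s}|\partial^\balpha u|_{s,\omega_F}$, and the local contributions are collected over all faces using the finite-overlap property of the patches $\{\omega_F\}$ guaranteed by shape-regularity, which again produces $h^{s}|u|_{m+s}$. Inserting these two bounds together with the two approximation bounds back into \eqref{equ:Strang}, and using the uniform $V_h$-ellipticity from Theorem \ref{thm:Poincare} to pass from $|u-u_h|_{m,h}$ to the full statement, completes the estimate.

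I expect the patch-average term together with its summation over $\mathcal{F}_h$ to be the main obstacle: one must apply the scaled approximation estimate on the overlapping patches $\omega_F$ and then control the accumulation of these local errors through the bounded-overlap count $\#\{F : x\in\omega_F\}\lesssim 1$, which is precisely where shape-regularity enters. The element-wise projection term and the two interpolation bounds are routine by comparison.
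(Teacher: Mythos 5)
Your route coincides with the paper's (implicit) proof: the paper deduces this theorem by bounding the two terms of Strang's inequality \eqref{equ:Strang}, taking $\Pi_h u$ together with Theorem \ref{thm:interpolation-global} for the approximation terms and Lemma \ref{lem:nonconforming-no-regularity} for the consistency term, then estimating the constant-projection residuals --- exactly your plan. Your elementwise bound $\|\partial^\balpha u - P_h^0\partial^\balpha u\|_0 \lesssim h^s|\partial^\balpha u|_{s} \lesssim h^s|u|_{m+s}$ is correct, since the elements are disjoint and the local Bramble--Hilbert estimates sum in the $\ell^2$ sense.

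However, the step you yourself single out as the main obstacle contains a genuine error. Bounded overlap of the patches $\{\omega_F\}$ controls sums of \emph{squares}, $\sum_{F\in\mathcal{F}_h}\|g\|_{0,\omega_F}^2 \lesssim \|g\|_{0,\Omega}^2$; it does not control the straight sum $\sum_{F\in\mathcal{F}_h}\|g\|_{0,\omega_F}$ that appears literally in \eqref{equ:nonconforming-no-regularity}. There are $\mathcal{O}(h^{-n})$ faces, and for a generic smooth $u$ (with $\nabla\partial^\balpha u$ bounded away from zero) each local residual satisfies $\|\partial^\balpha u - P_{\omega_F}^0\partial^\balpha u\|_{0,\omega_F} \sim h^{1+n/2}$, so the straight sum is of order $h^{1-n/2}$: it is $\mathcal{O}(1)$ in 2D and diverges for $n\geq 3$, rather than being $\mathcal{O}(h^s)$. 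Hence your claim that finite overlap ``again produces $h^{s}|u|_{m+s}$'' fails, and in fact the theorem cannot be deduced from Lemma \ref{lem:nonconforming-no-regularity} if its face sum is read literally --- a looseness that, to be fair, originates in the paper's own statement of that lemma. The repair is to observe that the proof of the lemma (applying Cauchy--Schwarz across faces in the $E_1$ estimate, together with $|v_h-\Pi_h^c v_h|_{m-1,h}\lesssim h|v_h|_{m,h}$ from Assumption \ref{asm:conforming-relatives}) actually yields the stronger $\ell^2$ form, with $\sum_F\|\cdot\|_{0,\omega_F}$ replaced by $\bigl(\sum_F\|\cdot\|_{0,\omega_F}^2\bigr)^{1/2}$; for that quantity your patchwise Bramble--Hilbert estimate plus finite overlap does conclude, since $\bigl(\sum_F \|\partial^\balpha u - P_{\omega_F}^0\partial^\balpha u\|^2_{0,\omega_F}\bigr)^{1/2} \lesssim h^{s}\bigl(\sum_F|\partial^\balpha u|^2_{s,\omega_F}\bigr)^{1/2}\lesssim h^{s}|u|_{m+s}$. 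Your argument is complete once the lemma is invoked (or restated) in this square-summed form; as written, the finite-overlap claim is the gap.
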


\begin{remark}
We note that only $H^m$ regularity is required in Lemma
\ref{lem:nonconforming-no-regularity} and Theorem
\ref{thm:error-no-regularity}. Similar technique can be found in
\cite{mao2010error,hu2014new,hu2016canonical}.
\end{remark}

\section{A robust $H^3$ nonconforming element in 2D}
\label{sec:perturbed-2D}
Taking the cue from the degrees of freedom of the Morley element, we
can obtain an nonconforming finite element space $\tilde{V}_h^{(3,2)}$
that converges for both second and fourth order elliptic problems. The
shape function space on a triangle $T$ is given by 
\begin{equation} \label{equ:perturbed-space}
\tilde{P}_T^{(3,2)} := \mathcal{P}_3(T) + q_T \mathcal{P}_1(T) + q_T^2
\mathcal{P}_1(T).
\end{equation} 
The degrees of freedom for $\tilde{P}_T^{(3,2)}$ are determined by and
depicted as
\begin{multicols}{2}
\begin{subequations}\label{equ:perturbed-DOFs}
\begin{align} 
\frac{1}{|F|}\int_F \frac{\partial^2 v}{\partial \nu_F^2}  
& \quad \text{for all edges }F \label{equ:perturbed-DOF1} \\ 
\nabla v(a) & \quad \text{for all vertices }a
\label{equ:perturbed-DOF2} \\
\frac{1}{|F|}\int_F \frac{\partial v}{\partial \nu_F} & \quad
\text{for all edges }F \label{equ:perturbed-DOF3} \\
v(a) & \quad \text{for all vertices }a
\label{equ:perturbed-DOF4}
\end{align}
\end{subequations}
\qquad \includegraphics[width=1.3in]{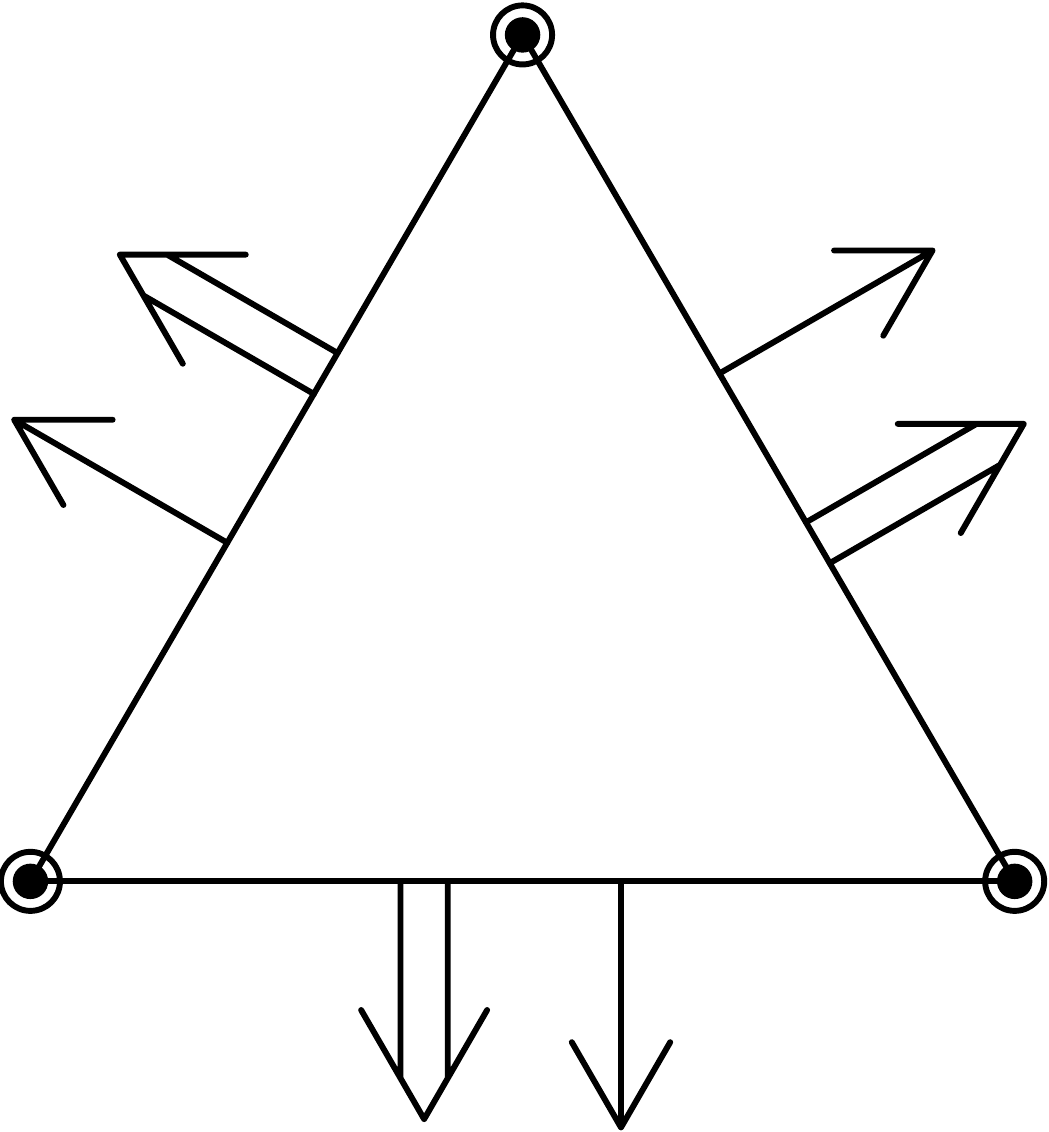} 
\end{multicols}
\begin{lemma} \label{lem:perturbed-unisolvent}
Any function $v \in \tilde{P}_T^{(3,2)}$ is uniquely determined by the
degrees of freedom \eqref{equ:perturbed-DOFs}.
\end{lemma}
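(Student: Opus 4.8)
The plan is to match dimension counts and then show the functionals \eqref{equ:perturbed-DOFs} have trivial kernel on $\tilde P_T^{(3,2)}$. For the count, since $q_T = \lambda_1\lambda_2\lambda_3$ has degree $3$, the map $p\mapsto q_Tp$ is injective, $\mathcal{P}_3(T)\cap q_T\mathcal{P}_1(T) = \mathrm{span}\{q_T\}$, and $q_T^2\mathcal{P}_1(T)$ meets $\mathcal{P}_3(T)+q_T\mathcal{P}_1(T)$ trivially for degree reasons; hence $\dim\tilde P_T^{(3,2)} = 10 + 2 + 3 = 15$. On the other side, \eqref{equ:perturbed-DOF1}--\eqref{equ:perturbed-DOF4} contribute $3+6+3+3 = 15$ functionals, so it suffices to prove that a function $v\in\tilde P_T^{(3,2)}$ killed by all of them is zero. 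Writing $v = v_3 + q_Tp + q_T^2 r$ with $v_3\in\mathcal{P}_3(T)$ and $p,r\in\mathcal{P}_1(T)$, the first step would be a trace argument: since $q_T$ and $q_T^2$ vanish on every edge, the edge trace satisfies $v|_F = v_3|_F$, a univariate cubic. The vertex values \eqref{equ:perturbed-DOF4} together with the tangential components of the vertex gradients \eqref{equ:perturbed-DOF2} supply the full Hermite data of this cubic at both endpoints of $F$, so $v_3|_F = 0$ on each of the three edges. Then $q_T$ divides $v_3$, and comparing degrees gives $v_3 = cq_T$ for a constant $c$; absorbing it, I may assume $v = q_T\tilde p + q_T^2 r$ with $\tilde p := c+p\in\mathcal{P}_1(T)$.

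The crux is then a triangular structure in the remaining functionals, obtained by tracking orders of vanishing at an edge. Fix an edge $F$ and let $\mu$ denote the barycentric coordinate that cuts out $F$; then $q_T = O(\mu)$ and $q_T^2 = O(\mu^2)$, so the first normal derivative of the term $q_T^2 r$ vanishes on $F$, whence $\partial v/\partial\nu_F|_F = (\partial q_T/\partial\nu_F)\,\tilde p|_F$ depends on $\tilde p$ alone. Parametrizing $F$ by $s\in[0,1]$ one has $\partial q_T/\partial\nu_F|_F = a_F\,\lambda\lambda'$ with the two remaining barycentric coordinates equal to $1-s$ and $s$, and since $\int_0^1 s(1-s)^2\,ds = \int_0^1 s^2(1-s)\,ds$, the functional \eqref{equ:perturbed-DOF3} reduces on each edge to the condition that the sum of the two endpoint values of $\tilde p$ vanishes. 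The resulting $3\times 3$ system over the three edges has matrix $\left(\begin{smallmatrix}1&1&0\\0&1&1\\1&0&1\end{smallmatrix}\right)$ with determinant $2$, so $\tilde p(a_i)=0$ at all vertices and thus $\tilde p=0$. Now $v = q_T^2 r$, and the same expansion gives $\partial^2 v/\partial\nu_F^2|_F = 2\,(\partial q_T/\partial\nu_F)^2\,(\lambda\lambda')^2\, r|_F$, so by the identical symmetric-integral computation \eqref{equ:perturbed-DOF1} reduces on each edge to the vanishing of the sum of endpoint values of $r$. The same nonsingular system forces $r=0$, hence $v=0$.

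I expect the main obstacle to lie in the normal-derivative bookkeeping that produces this triangular (rather than block-diagonal) structure. The functionals are not cleanly separated across the three summands of $\tilde P_T^{(3,2)}$ — both \eqref{equ:perturbed-DOF1} and \eqref{equ:perturbed-DOF3} do see $q_T\mathcal{P}_1(T)$ — so one must use them in the correct order, exploiting that the first normal derivative is blind to $q_T^2\mathcal{P}_1(T)$ to eliminate $\tilde p$ before the second normal derivative isolates $r$. Verifying the precise edge restrictions of $\partial_{\nu_F}q_T$ and $\partial^2_{\nu_F}(q_T^2 r)$, and confirming the two Beta-function integrals coincide so that both reductions land on the same invertible $3\times 3$ endpoint-sum system, is the only place where genuine care is needed; the remaining steps are routine.
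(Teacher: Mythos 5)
Your proof is correct and takes essentially the same route as the paper: dimension count, elimination of the $\mathcal{P}_3(T)$ part via the vertex values and vertex gradients, then the edge averages of $\partial v/\partial \nu_F$ (which are blind to $q_T^2\mathcal{P}_1(T)$) to kill the $q_T\mathcal{P}_1(T)$ component, and finally the second-normal-derivative averages to kill the remaining $q_T^2\mathcal{P}_1(T)$ component --- you simply carry out the Beta-integral endpoint-sum computation that the paper compresses into ``by direct calculation.'' One harmless notational slip: the restriction formula should read $\partial^2(q_T^2 r)/\partial\nu_F^2\big|_F = 2\,(\partial \mu_F/\partial\nu_F)^2(\lambda\lambda')^2\, r|_F$, where $\mu_F$ is the barycentric coordinate vanishing on $F$, rather than containing the factor $(\partial q_T/\partial\nu_F)^2$ (which, by your own earlier identity $\partial q_T/\partial\nu_F|_F = a_F\lambda\lambda'$, would double-count $(\lambda\lambda')^2$); either way the integrand is $(\lambda\lambda')^k$ times a linear function with symmetric Beta integrals, so the reduction to the invertible endpoint-sum system is unaffected.
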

\begin{proof}
Clearly, $\dim(\tilde{P}_T^{(3,2)}) = 15$, which is exactly the number
of degrees of freedom given in \eqref{equ:perturbed-DOFs}. If $v \in
\tilde{P}_T^{(3,2)}$ has all the degrees of freedom zero, since $v|_F
\in \mathcal{P}_3(F)$, we obtain that $v$ is of the form $v = q_T p$,
where $p \in \mathcal{P}_1(T) \oplus q_T\mathcal{P}_1(T)$. Let $p =
p_1 + q_T p_2 \in \mathcal{P}_1(T) \oplus q_T\mathcal{P}_1(T)$.
Applying the degrees of freedom \eqref{equ:perturbed-DOF3} to $v =
q_Tp_1 + q_T^2 p_2$, we obtain $p_1 = 0$ by direct calculation. Then,
the vanishing of the degrees of freedom \eqref{equ:perturbed-DOF1}
implies that $p_2 = 0$.   
\end{proof}
The degrees of freedom \eqref{equ:perturbed-DOF3} and
\eqref{equ:perturbed-DOF4} are exactly those of the Morley element.
Similar to Lemma
\ref{lem:face-average}, the global finite element spaces
$\tilde{V}_h^{(3,2)}$ and $\tilde{V}_{h0}^{(3,2)}$, defined in the
same way as $V_h^{(3,2)}$ and $V_{h0}^{(3,2)}$ in Section
\ref{subsec:global}, satisfy the following lemma. 
\begin{lemma} \label{lem:perturbed-face}
Let $F$ be an edge of $T\in \mathcal{T}_h$. For any $v_h \in
\tilde{V}_h^{(3,2)}$ and any $T'\in \mathcal{T}_h$ with $F \subset
T'$, 
\begin{equation}
\int_F \partial^{\balpha}(v_h|_T) = \int_F
  \partial^{\balpha}(v_h|_{T'}) \qquad  
|\balpha| = 1, 2.
\end{equation}
If $F \subset \partial\Omega$, then for any $v_h \in
\tilde{V}_{h0}^{(3,2)}$, 
\begin{equation}
\int_F \partial^{\balpha}(v_h|_T) = 0 \qquad |\balpha| = 1, 2.
\end{equation}
\end{lemma}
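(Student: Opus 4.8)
The plan is to mirror the proof of Lemma \ref{lem:face-average}: it suffices to handle the derivatives in an orthonormal frame $\{\nu_F, \tau_F\}$ adapted to the edge $F$, where $\nu_F$ is the fixed unit normal used in the degrees of freedom and $\tau_F$ is the unit tangent obtained by fixing an orientation of $F$ (say from endpoint $a_1$ to endpoint $a_2$). Since the Hessian transforms as a tensor, any $\partial^{\balpha}$ with $|\balpha|=1$ (resp. $|\balpha|=2$) is a linear combination, with coefficients depending only on the geometry of $F$ and therefore common to both $T$ and $T'$, of $\partial_{\nu_F},\partial_{\tau_F}$ (resp. of $\partial_{\nu_F\nu_F}$, $\partial_{\nu_F\tau_F}$, $\partial_{\tau_F\tau_F}$). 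Hence it is enough to show that the edge integral of each frame derivative is single-valued across $F$, which I will do by matching it either to a prescribed degree of freedom or, via the fundamental theorem of calculus along $F$, to vertex data that is prescribed.

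For $|\balpha|=1$ I would argue as follows. The normal component $\int_F \partial_{\nu_F}(v_h|_T)$ is exactly $|F|$ times the degree of freedom \eqref{equ:perturbed-DOF3}, which is single-valued in $\tilde{V}_h^{(3,2)}$, so it agrees with the value computed from $T'$. For the tangential component, the fundamental theorem of calculus along $F$ gives $\int_F \partial_{\tau_F} v_h\,\mathrm{d}s = v_h(a_2)-v_h(a_1)$, and the vertex values are continuous by \eqref{equ:perturbed-DOF4}; hence this integral also matches. Combining the two yields the $|\balpha|=1$ identity.

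For $|\balpha|=2$ the pure-normal piece $\int_F \partial_{\nu_F\nu_F}(v_h|_T)$ is $|F|$ times \eqref{equ:perturbed-DOF1}, hence single-valued. The remaining two pieces are not degrees of freedom directly, and this is the only real obstacle; I would resolve it with the fundamental theorem of calculus once more. Writing $\partial_{\tau_F\tau_F}=\partial_{\tau_F}(\partial_{\tau_F}\,\cdot)$ and $\partial_{\nu_F\tau_F}=\partial_{\tau_F}(\partial_{\nu_F}\,\cdot)$ (derivatives commute on each element), integration along $F$ gives $\int_F \partial_{\tau_F\tau_F}v_h\,\mathrm{d}s = (\tau_F\cdot\nabla v_h)(a_2)-(\tau_F\cdot\nabla v_h)(a_1)$ and $\int_F \partial_{\nu_F\tau_F}v_h\,\mathrm{d}s = (\nu_F\cdot\nabla v_h)(a_2)-(\nu_F\cdot\nabla v_h)(a_1)$; both are determined by the full vertex gradients $\nabla v_h(a_1),\nabla v_h(a_2)$, which are continuous by \eqref{equ:perturbed-DOF2}. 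This proves the interior identity for $|\balpha|=2$. Finally, the homogeneous boundary statement follows verbatim: if $F\subset\partial\Omega$ then for $v_h\in \tilde{V}_{h0}^{(3,2)}$ all four degree-of-freedom families \eqref{equ:perturbed-DOF1}–\eqref{equ:perturbed-DOF4} vanish on $F$ and at its endpoints, so each of the five frame integrals above is zero, and hence $\int_F \partial^{\balpha}(v_h|_T)=0$ for $|\balpha|=1,2$.
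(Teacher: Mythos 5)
Your proof is correct and takes essentially the same route as the paper: the paper disposes of this lemma by invoking the argument of Lemma \ref{lem:face-average} (i.e., the recursive Green's-lemma argument behind Lemma \ref{lem:vanish-derivatives}), which in this 2D setting is precisely your normal/tangential frame decomposition, with the normal pieces matched to the degrees of freedom \eqref{equ:perturbed-DOF1} and \eqref{equ:perturbed-DOF3} and the tangential and mixed pieces reduced by the fundamental theorem of calculus along $F$ to the vertex data \eqref{equ:perturbed-DOF2} and \eqref{equ:perturbed-DOF4}. You have simply written out explicitly what the paper leaves as a citation.
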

Then, a routine argument shows that $\tilde{V}_h^{(3,2)}$ converges
for second, fourth and sixth order elliptic problems. Hence, followed
by a similar argument in \cite{nilssen2001robust}, the modified
$H^3$ nonconforming finite element space is robust for the sixth order
singularly perturbed problems.

An additional advantage of the modified nonconforming element is its
convenience in handling the sixth order equation with the mixed boundary
conditions: 
\begin{equation} \label{equ:m-natural}
\left\{
\begin{aligned}
(-\Delta)^3 u + b_0 u &= f \qquad \mbox{in }\Omega \subset
\mathbb{R}^2, \\
\frac{\partial (\Delta^{k} u)}{\partial \nu} &= 0 \qquad \mbox{on }\partial
\Omega, \quad 0 \leq k \leq 2,
\end{aligned}
\right.
\end{equation}
where $b_0 = \mathcal{O}(1)$ is a positive constant. For any
$F \in \mathcal{F}_h^\partial$, let $\tau$ denote the unit tangential
direction obtain by rotating $\nu$ $90^\circ$ counterclockwise. It is
straightforward to show that 
$$
\nabla v = \frac{\partial v}{\partial \nu} \nu + \frac{\partial
v}{\partial \tau}\tau, \qquad \Delta v = \frac{\partial^2
v}{\partial \nu^2} + \frac{\partial^2 v}{\partial \tau^2}.
$$
Therefore, the variational problem of \eqref{equ:m-natural} reads:
Find $u \in \tilde{H}_0^3(\Omega) :=\{ v\in  H^3(\Omega)~|~
\frac{\partial v}{\partial \nu} = 0~~\text{on }\partial\Omega\}$, such
that
\begin{equation} \label{equ:m-natural-bilinear} 
a(v, w) + b(v, w) = (f, v) \qquad \forall v\in \tilde{H}_0^3(\Omega),
\end{equation} 
where $a(\cdot, \cdot)$ is defined in \eqref{equ:bilinear-form} and
$$ 
b(v,w) := (b_0 v, w) \qquad \forall v, w \in L^2(\Omega).
$$ 
The well-posedness of
\eqref{equ:m-natural-bilinear} follows from the Poincar\'{e}
inequality (obtained by compact embedding argument) 
\begin{equation} \label{equ:H3-poincare}
\|v\|_3^2 \lesssim \|v\|_0^2 + |v|_3^2 \qquad \forall v \in H^3(\Omega).
\end{equation} 

We denote the nonconforming finite element space as 
\begin{equation} \label{equ:m-natural-space}
\begin{aligned}
\tilde{V}_h := \{v_h \in \tilde{V}_h^{(3,2)} ~|~  \frac{1}{|F|}\int_F
\frac{\partial v_h}{\partial \nu} &= 0 \quad \text{for all edges }F
\subset \partial\Omega \\ 
\frac{\partial v_h}{\partial \nu}(a) &= 0 \quad \text{for all
  vertices }a \in \partial \Omega\}.
\end{aligned}
\end{equation}
Then, the nonconforming finite element method for
\eqref{equ:m-natural} is to find $u_h \in \tilde{V}_h$, such that 
\begin{equation} \label{equ:m-natural-scheme} 
a_h(u_h, v_h) + b(u_h, v_h) = (f, v_h) \qquad \forall v_h \in
\tilde{V}_h.
\end{equation} 
Here, $a_h(\cdot,\cdot)$ is the broken bilinear form defined in
\eqref{equ:nonconforming-FEM}. We first establish its well-posedness.
By using the standard enriching operator $E_h$ (cf.
\cite{gudi2011interior}) from $\tilde{V}_h^{(3,2)}$ to the $H^3$
conforming finite element space, e.g. {\v{Z}}en{\'\i}{\v{s}}ek finite
element space \cite{vzenivsek1970interpolation}, we have
\begin{equation} \label{equ:enriching}
\begin{aligned}
& h^{-6}\|v_h - E_h v_h\|_0^2 + h^{-4}|v_h - E_h v_h|_{1,h}^2 \\
+~ & h^{-2}|v_h - E_h v_h|_{2,h}^2 + |v_h - E_h v_h|_{3,h}^2 \lesssim
  |v_h|_{3,h}^2 \qquad \forall v_h \in \tilde{V}_h^{(3,2)}.
\end{aligned}
\end{equation}
The well-posedness of \eqref{equ:m-natural-scheme} follows from the
lemma below.
\begin{lemma} \label{lem:H3h-poincare} 
It holds that 
\begin{equation} \label{equ:H3h-poincare} 
\|v_h\|_{3,h}^2 \lesssim \|v_h\|_0^2 + |v_h|_{3,h}^2 \qquad \forall
v_h \in \tilde{V}_h^{(3,2)}.
\end{equation} 
\end{lemma}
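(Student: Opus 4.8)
The plan is to transfer the desired inequality to the $H^3$ conforming finite element space by means of the enriching operator $E_h$, and then to invoke the continuous Poincar\'e inequality \eqref{equ:H3-poincare}, which is already available on all of $H^3(\Omega)$. Since $E_h v_h$ lies in the conforming space we have $E_h v_h \in H^3(\Omega)$, so its broken norms coincide with the genuine Sobolev norms and \eqref{equ:H3-poincare} yields $\|E_h v_h\|_3^2 \lesssim \|E_h v_h\|_0^2 + |E_h v_h|_3^2$. The whole argument is then a matter of splicing this together with the enriching estimate \eqref{equ:enriching} via the triangle inequality.

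First I would split
$$
\|v_h\|_{3,h} \le \|v_h - E_h v_h\|_{3,h} + \|E_h v_h\|_{3,h} = \|v_h - E_h v_h\|_{3,h} + \|E_h v_h\|_3 .
$$
The first summand is controlled directly by \eqref{equ:enriching}: expanding $\|v_h - E_h v_h\|_{3,h}^2$ into its four graded pieces and using that the mesh size is bounded above, each piece is dominated by the corresponding scaled quantity in \eqref{equ:enriching}, so that $\|v_h - E_h v_h\|_{3,h} \lesssim |v_h|_{3,h}$.

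Next I would estimate the two terms appearing on the right of \eqref{equ:H3-poincare}. For the zeroth-order term, the triangle inequality together with \eqref{equ:enriching} gives $\|E_h v_h\|_0 \le \|v_h\|_0 + \|v_h - E_h v_h\|_0 \lesssim \|v_h\|_0 + h^3 |v_h|_{3,h} \lesssim \|v_h\|_0 + |v_h|_{3,h}$; here I would \emph{retain} the $\|v_h\|_0$ term rather than attempt to absorb it into a seminorm. For the top-order term, $|E_h v_h|_3 \le |v_h|_{3,h} + |v_h - E_h v_h|_{3,h} \lesssim |v_h|_{3,h}$, again by \eqref{equ:enriching}. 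Combining these with \eqref{equ:H3-poincare} gives
$$
\|v_h\|_{3,h}^2 \lesssim |v_h|_{3,h}^2 + \|E_h v_h\|_3^2 \lesssim |v_h|_{3,h}^2 + \|v_h\|_0^2 + |v_h|_{3,h}^2 \lesssim \|v_h\|_0^2 + |v_h|_{3,h}^2 ,
$$
which is the assertion.

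Given the two inputs this argument is essentially routine, and I do not expect a genuine obstacle; the only point requiring a little care is the bookkeeping with the powers of $h$ in \eqref{equ:enriching}. One must observe that, since $h$ is bounded above, the factors $h^6, h^4, h^2, 1$ multiplying the scaled differences are all uniformly bounded, so every graded piece of $\|v_h - E_h v_h\|_{3,h}^2$ is controlled by $|v_h|_{3,h}^2$ with no adverse scaling. The essential mechanism is simply that the compact-embedding Poincar\'e inequality holds on the conforming enrichment $E_h v_h$, and the nonconforming remainder $v_h - E_h v_h$ is uniformly small relative to $|v_h|_{3,h}$.
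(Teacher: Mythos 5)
Your proof is correct and follows essentially the same route as the paper: both split $v_h$ via the enriching operator $E_h$, apply the continuous Poincar\'e inequality \eqref{equ:H3-poincare} to the conforming function $E_h v_h$, and control all the difference terms $v_h - E_h v_h$ by the enriching estimate \eqref{equ:enriching} together with the triangle inequality. The only difference is presentational: the paper compresses the argument into one chained string of inequalities, while you spell out the graded bookkeeping in \eqref{equ:enriching} explicitly.
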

\begin{proof}
It follows from \eqref{equ:H3-poincare} and \eqref{equ:enriching} that 
$$ 
\begin{aligned}
\|v_h\|_{3,h}^2 & \lesssim \|E_h v_h\|_{3}^2 + \|v_h - E_h
v_h\|_{3,h}^2 \lesssim \|E_h v_h\|_{0}^2 + |E_h v_h|_{3}^2 +
|v_h|_{3,h}^2 \\
& \lesssim \|v_h - E_h v_h\|_0^2 + \|v_h\|_0^2 + |v_h
- E_h v_h|_{3,h}^2 + |v_h|_{3,h}^2 \\
& \lesssim \|v_h\|_0^2 + |v_h|_{3,h}^2.
\end{aligned}
$$ 
Then, we finish the proof.
\end{proof}

Next, the consistency error $E(u, v_h)$ can be written as  
$$ 
\begin{aligned}
E(u, v_h) &= a_h(u, v_h) + b(u, v_h) - (f, v_h) \\ 
& = \sum_{T\in \mathcal{T}_h} \int_T \nabla^3u : \nabla^3 v_h +
\int_\Omega (\Delta^3 u) v_h \\
& = \sum_{T\in \mathcal{T}_h} \int_T \nabla^3u : \nabla^3 v_h +
\sum_{T\in \mathcal{T}_h} \int_T \nabla^2(\Delta u) : \nabla^2 v_h \\
& ~ - \sum_{T\in \mathcal{T}_h} \int_T \nabla^2(\Delta u) : \nabla^2
v_h - \sum_{T\in \mathcal{T}_h} \int_T \nabla(\Delta^2 u) \cdot \nabla
v_h \\
& ~ + \sum_{T\in \mathcal{T}_h} \int_T \nabla(\Delta^2 u) \cdot \nabla
v_h + \int_\Omega (\Delta^3 u) v_h \\
& := E_1 + E_2 + E_3.
\end{aligned}
$$ 
Recall that $P_F^0: L^2(F) \mapsto \mathcal{P}_0(F)$ is the orthogonal
projection.  By Green's formula and Lemma \ref{lem:perturbed-face}, we
have 
$$ 
\begin{aligned}
E_1 &= \sum_{T \in \mathcal{T}_h} \int_{\partial T}
\frac{\partial}{\partial \nu}(\nabla^2 u) : \nabla^2 v_h \\
&= \sum_{F\in \mathcal{F}_h^i} \int_F \left( \frac{\partial}{\partial
  \nu}(\nabla^2 u) - P_F^0 \frac{\partial}{\partial
  \nu}(\nabla^2 u)\right): \left(\nabla^2 v_h - P_F^0  \nabla^2 v_h
  \right) \\ 
&~ + \sum_{F\in \mathcal{F}_h^\partial} \int_F \frac{\partial^3
u}{\partial \nu^3} \frac{\partial^2 v_h}{\partial \nu^2} + 2
\frac{\partial^3 u}{\partial \nu^2 \partial \tau} \frac{\partial^2
  v_h}{\partial \nu\partial \tau} + 
\frac{\partial^3
u}{\partial \nu \partial \tau^2} \frac{\partial^2 v_h}{\partial
  \tau^2} \\
&= \sum_{F\in \mathcal{F}_h^i} \int_F \left( \frac{\partial}{\partial
  \nu}(\nabla^2 u) - P_F^0 \frac{\partial}{\partial
  \nu}(\nabla^2 u)\right): \left(\nabla^2 v_h - P_F^0  \nabla^2 v_h
  \right) \\ 
& + 2 \sum_{F\in \mathcal{F}_h^\partial} \int_F \frac{\partial^3
  u}{\partial \nu^2 \partial \tau} \frac{\partial^2 v_h}{\partial
  \nu\partial \tau} \\
&= \sum_{F\in \mathcal{F}_h^i} \int_F \left( \frac{\partial}{\partial
  \nu}(\nabla^2 u) - P_F^0 \frac{\partial}{\partial
  \nu}(\nabla^2 u)\right): \left(\nabla^2 v_h - P_F^0  \nabla^2 v_h
  \right) \\ 
& + 2 \sum_{F\in \mathcal{F}_h^\partial} \int_F \left( \frac{\partial^3
  u}{\partial \nu^2 \partial \tau} - P_F^0 \frac{\partial^3
  u}{\partial \nu^2 \partial \tau} \right) 
  \left( \frac{\partial^2 v_h}{\partial \nu\partial \tau} - 
  P_F^0 \frac{\partial^2 v_h}{\partial \nu\partial \tau} \right).
\end{aligned}
$$ 
Here, we use the fact that for any $F\in \mathcal{F}_h^\partial$,
$\int_F \frac{\partial^2 v_h}{\partial \nu\partial\tau} = 0$.  Then,
the Schwarz inequality and the interpolation theory imply 
$$ 
|E_1| \lesssim h|u|_4 |v_h|_{3,h}.
$$ 
A similar argument shows that 
$$
\begin{aligned}
E_2 &= -\sum_{F\in \mathcal{F}_h^i} \int_F \left( \frac{\partial }{\partial
    \nu} (\Delta \nabla u) - P_F^0 \frac{\partial }{\partial \nu}
    (\Delta \nabla u) \right) \cdot \left( \nabla v_h - P_F^0 \nabla
      v_h \right) \\
    &~~ - \sum_{F\in \mathcal{F}_h^\partial } \int_F
\left(\frac{\partial^2}{\partial \nu^2}(\Delta u) - P_F^0
    \frac{\partial^2}{\partial \nu^2}(\Delta u) \right) \left(
    \frac{\partial v_h}{\partial \nu} - P_F^0  \frac{\partial
    v_h}{\partial \nu} \right), 
\end{aligned}
$$ 
which gives 
$$ 
|E_2| \lesssim h|u|_5|v_h|_{2,h}.
$$ 
The boundary conditions and the $H^1$-conformity of $\tilde{V}_h$
imply that $E_3 = 0$.

By Strang' lemma and interpolation theory, we obtain the following theorem. 
\begin{theorem} \label{thm:perturbed-converge}
If $u \in H^5(\Omega) \cap \tilde{H}_0^3(\Omega)$ and $f\in
  L^2(\Omega)$, then 
$$ 
\|u - u_h\|_{3,h} \lesssim h(|u|_4 + |u|_5).
$$ 
\end{theorem}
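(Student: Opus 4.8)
The plan is to invoke a Strang-type estimate for the nonconforming scheme \eqref{equ:m-natural-scheme}, reducing $\|u-u_h\|_{3,h}$ to an approximation term plus a consistency term, and then to dispatch each using interpolation theory and the consistency bounds already derived above. First I would set $A_h(v,w):=a_h(v,w)+b(v,w)$ and record its two structural properties on $\tilde{V}_h$: boundedness $A_h(v,w)\lesssim\|v\|_{3,h}\|w\|_{3,h}$, immediate from Cauchy--Schwarz, and coercivity $A_h(v_h,v_h)=|v_h|_{3,h}^2+b_0\|v_h\|_0^2\gtrsim\|v_h\|_{3,h}^2$, which is precisely the discrete Poincar\'e inequality of Lemma \ref{lem:H3h-poincare} together with $b_0>0$. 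These give well-posedness of \eqref{equ:m-natural-scheme} and license the estimate
$$
\|u-u_h\|_{3,h}\lesssim\inf_{w_h\in\tilde{V}_h}\|u-w_h\|_{3,h}+\sup_{v_h\in\tilde{V}_h}\frac{|E(u,v_h)|}{\|v_h\|_{3,h}},
$$
where $E(u,v_h)=a_h(u,v_h)+b(u,v_h)-(f,v_h)$ is exactly the consistency residual decomposed as $E_1+E_2+E_3$ in the text preceding the theorem.

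For the approximation term I would use the canonical nodal interpolation $\Pi_h$ associated with the degrees of freedom \eqref{equ:perturbed-DOFs}. Since $\tilde{P}_T^{(3,2)}\supset\mathcal{P}_3(T)$, the element reproduces cubics, so by the same Bramble--Hilbert argument as in Lemma \ref{lem:interpolation} (with $m=3$, $s=1$, $k=3$) one obtains $\|u-\Pi_h u\|_{3,h}\lesssim h|u|_4$ for $u\in H^4(\Omega)\supset H^5(\Omega)$; the interpolant is well-defined because $u\in H^5(\Omega)$ controls all the functionals in \eqref{equ:perturbed-DOFs} via the Sobolev embedding in 2D. I would also verify that $\Pi_h u\in\tilde{V}_h$: since $u\in\tilde{H}_0^3(\Omega)$ forces $\partial u/\partial\nu=0$ on $\partial\Omega$, the normal component in \eqref{equ:perturbed-DOF2} and the moment \eqref{equ:perturbed-DOF3} of $\Pi_h u$ vanish on the boundary, which is exactly the constraint defining $\tilde{V}_h$ in \eqref{equ:m-natural-space}. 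Hence the infimum is bounded by $h|u|_4$.

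For the consistency term I simply assemble the bounds established immediately before the theorem: $|E_1|\lesssim h|u|_4|v_h|_{3,h}$, $|E_2|\lesssim h|u|_5|v_h|_{2,h}\le h|u|_5\|v_h\|_{3,h}$, and $E_3=0$ by the $H^1$-conformity of $\tilde{V}_h$ and the boundary conditions. Dividing by $\|v_h\|_{3,h}$ and taking the supremum gives a consistency bound $\lesssim h(|u|_4+|u|_5)$. Combining the two terms yields $\|u-u_h\|_{3,h}\lesssim h(|u|_4+|u|_5)$, as claimed.

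The genuinely delicate part of the argument --- the integration-by-parts that produces the double-projection structure of $E_1$ and $E_2$ and exploits the vanishing of the boundary moment $\int_F\partial^2 v_h/\partial\nu\partial\tau$ --- has already been carried out in the text. Within the proof proper the only points needing care are confirming that $\Pi_h u$ lands in $\tilde{V}_h$ and that the zeroth-order term $b(\cdot,\cdot)$ does not spoil coercivity; both are routine given Lemma \ref{lem:H3h-poincare}. I do not expect any serious obstacle beyond correctly identifying the Strang consistency residual with $E(u,v_h)$ and matching the regularity index $u\in H^5$ against the interpolation order of the element.
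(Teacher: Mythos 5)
Your proposal is correct and follows essentially the same route as the paper: the paper's proof is precisely Strang's lemma combined with interpolation theory for the approximation term and the bounds $|E_1|\lesssim h|u|_4|v_h|_{3,h}$, $|E_2|\lesssim h|u|_5|v_h|_{2,h}$, $E_3=0$ derived in the text immediately preceding the theorem, with coercivity supplied by Lemma \ref{lem:H3h-poincare}. Your additional verifications (that $b(\cdot,\cdot)$ preserves coercivity and that $\Pi_h u\in\tilde{V}_h$ because $\partial u/\partial\nu=0$ on $\partial\Omega$ annihilates the relevant boundary degrees of freedom) are exactly the details the paper leaves implicit.
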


%%%%%%%%%%%%%%%%%%%%%%%%%%%%%%%%%%%%%%%%%%%%%%%%%%%%%%%%%%%%
%% Numerical Test
%%%%%%%%%%%%%%%%%%%%%%%%%%%%%%%%%%%%%%%%%%%%%%%%%%%%%%%%%%%%
\section{Numerical Tests} \label{sec:tests}
In this section, we present several 2D numerical results to support the
theoretical results obtained in Section \ref{sec:error-estimate}.

\subsection{Example 1}
In the first example, we choose $f=0$ so that the exact solution is $u
= \exp(\pi y) \sin(\pi x)$ in $\Omega = (0,1)^2$, which provides the
nonhomogeneous boundary conditions. After computing
\eqref{equ:nonconforming-FEM} for various values of $h$, we calculate
the errors and orders of convergence in $H^{k} (k=0,1,2,3)$ and report
them in Table \ref{tab:example1}. The table shows that the computed
solution converges linearly to the exact solution in the $H^3$ norm, which
is in agreement with Theorem \ref{thm:error-regularity} and
Theorem \ref{thm:error-no-regularity}.  Further, Table
\ref{tab:example1} indicates that $\|u-u_h\|_0$, $|u-u_h|_{1,h}$ and
$|u-u_h|_{2,h}$ are all of order $h^2$.

\begin{table}[!htbp]
\caption{Example 1: Errors and observed convergence orders.}
\centering
{\small{
\begin{tabular}{@{}c|cc|cc|cc|cc@{}}
   \hline
  $1/h$	&$\|u-u_h\|_0$	& Order	& 
  $|u-u_h|_{1,h}$ & Order & 
  $|u-u_h|_{2,h}$	& Order & $|u-u_h|_{3,h}$ & Order\\ \hline
  8		&2.7221e-3 &--	  &3.7562e-2 &--	  &8.1131e-1 &--   &5.0076e+1	&--  \\
  16	&6.5721e-4 &2.05	&6.6469e-3 &2.50	&2.1044e-1 &1.95 &2.5856e+1	&0.95\\
  32	&1.6337e-4 &2.01	&1.4450e-3 &2.20	&5.3510e-2 &1.98 &1.3081e+1	&0.98\\
  64	&4.1029e-5 &1.99	&3.4724e-4 &2.06	&1.3474e-2 &1.99 &6.5673e+0	&0.99\\
  \hline
\end{tabular}}}
\label{tab:example1}
\end{table}

\begin{figure}[!htbp]
\caption{Uniform grids for Example 1 and Example 2.}
\label{fig:uniform-grids}
\centering 
\captionsetup{justification=centering}
\subfloat[Example 1: Unit square domain]{\centering 
   \includegraphics[width=0.4\textwidth]{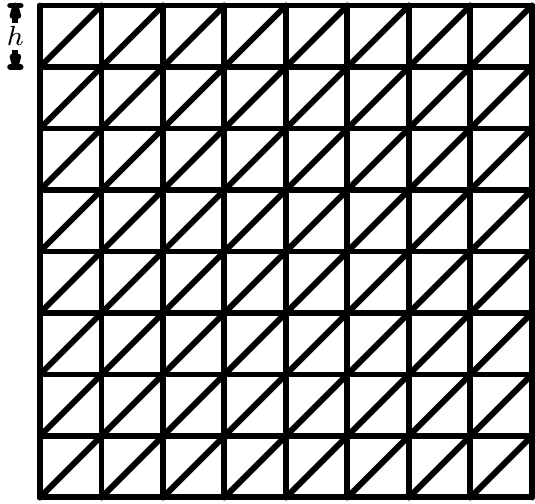} 
   \label{fig:square}
}% 
\qquad\qquad  
\subfloat[Example 2: L-shaped domain]{\centering 
   \includegraphics[width=0.4\textwidth]{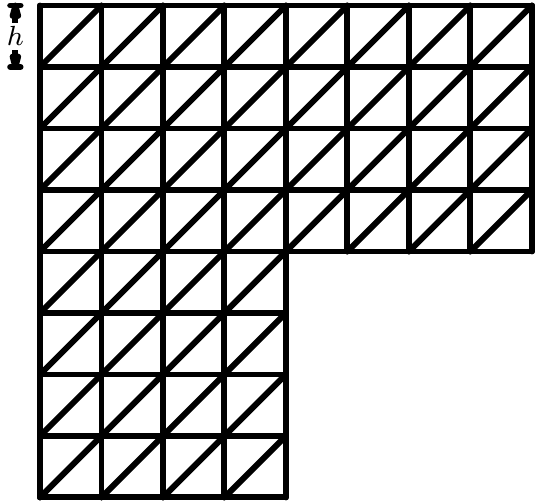} 
   \label{fig:L-shaped}
}
\end{figure}

%% Solution in H^{3+1/2} on L-shaped domain 
\subsection{Example 2}
In the second example, we test the method in which the solution has
partial regularity on a non-convex domain. To this end, we solve the
tri-harmonic equation 
$$ 
(-\Delta)^3 u = 0
$$ 
on the 2D L-shaped domain $\Omega = (-1,1)^2\setminus [0,1)\times
(-1,0]$ shown in Figure \ref{fig:L-shaped}, with Dirichlet boundary conditions
given by the exact solution 
$$ 
u = r^{2.5} \sin(2.5\theta),
$$ 
where $(r,\theta)$ are polar coordinates. Due to the singularity at
the origin, the solution $u \in H^{3+1/2}(\Omega)$. The method does
converge with the optimal order $h^{1/2}$ in the broken $H^3$
norm, as shown in Table \ref{tab:example2}.

\begin{table}[!htbp]
\caption{Example 2: Errors and observed convergence orders.}
\centering
{\small{
\begin{tabular}{@{}c|cc|cc|cc|cc@{}}
   \hline
  $1/h$	&$\|u-u_h\|_0$	& Order	& 
  $|u-u_h|_{1,h}$ & Order & 
  $|u-u_h|_{2,h}$	& Order & $|u-u_h|_{3,h}$ & Order\\ \hline
  4		&9.0977e-4 &--	  &6.5652e-3 &--	  &4.9732e-2 &--   &9.3881e-1	&--  \\
  8		&3.3208e-4 &1.45	&2.0825e-3 &1.66	&2.0598e-2 &1.27 &6.8270e-1	&0.46\\
%  8		&3.3208e-4 &--  	&2.0825e-3 &--  	&2.0598e-2 &--   &6.8270e-1 &--\\
  16	&1.3845e-4 &1.26	&7.6830e-4 &1.44	&8.3676e-3 &1.30 &4.8821e-1	&0.48\\
  32	&6.2963e-5 &1.13	&3.2391e-4 &1.26	&3.4430e-3 &1.28 &3.4697e-1	&0.49\\
  64	&2.9775e-5 &1.08	&1.4691e-4 &1.14	&1.4548e-3 &1.24 &2.4593e-1	&0.50\\
  \hline
\end{tabular}}}
\label{tab:example2}
\end{table}

%%%%%%%%%%%%%%%%%%%%%%%%%%%%%%%%%%%%%%%%%%%%%%%%%%%%%%%%%%%% 
%% Concluding 
%%%%%%%%%%%%%%%%%%%%%%%%%%%%%%%%%%%%%%%%%%%%%%%%%%%%%%%%%%%%
\section{Concluding Remarks} \label{sec:concluding}
In this paper, we propose and study the nonconforming finite
elements for $2m$-th order elliptic problems when $m=n+1$.
After showing the convergence analysis under minimal regularity
assumption, we provide two kinds of error estimates --- one requires
the extra regularity, and the other assumes only minimal regularity but
the existence of the conforming relative. We also propose an $H^3$
nonconforming finite element space that is robust for the sixth order
singularly perturbed problems in 2D.

The universal construction when $m=n+1$ is motivated by the similarity
properties of both shape function spaces and degrees of freedom, which
also work for the WMX elements that require $m \leq n$. However, the
universal construction when $m \geq n+2$ is still an open problem.

\subsection*{Acknowledgement} The authors would like to express their
gratitude to anonymous referees for the valuable comments leading to a
better version of this paper.

\appendix 

%\section{Nitsche's technique on the boundary condition} \label{app:Nitsche}
%
%%%A simple way to handle the periodic boundary condition is
%%%to generate the mesh with periodic mesh topology. 
%Without loss of generality, we consider the weak treatment of the
%boundary conditions in the following equations:
%$$
%\left\{
%\begin{aligned}
%(-\Delta)^3 u + b_0 u &= f \qquad \mbox{in }\Omega \subset
%\mathbb{R}^2, \\
%\frac{\partial (\Delta^{k} u)}{\partial \nu} &= 0 \qquad \mbox{on }\partial
%\Omega, \quad k = 0,1,2,
%\end{aligned}
%\right.
%$$
%where $b_0 = \mathcal{O}(1)$ is a piecewise positive constant. By
%using the Nitsche's technique, we define the method as finding $u_h
%\in V_{h}^{(3,2)}$ such that 
%$$ 
%\tilde{a}_h(u_h, v_h) + b(u_h, v_h) = (f, v_h) \qquad \forall v_h \in
%V_{h}^{(3,2)},
%$$ 
%where
%$$ 
%\begin{aligned}
%\tilde{a}_h(v, w) =~ & (\nabla_h^3 v, \nabla_h^3 w) \\ 
%& + \sum_{F\in \mathcal{F}_h^i}
%\int_F (-2\frac{\partial^3 v}{\partial\nu^2\partial \tau}
%\frac{\partial^2 w}{\partial\nu\partial \tau} 
%- 2\frac{\partial^2 v}{\partial\nu\partial \tau} 
%\frac{\partial^3 w}{\partial\nu^2\partial \tau} 
%+ \frac{\sigma_1}{h_F^{2}} \frac{\partial^2 v}{\partial\nu\partial
%\tau} \frac{\partial^2 w}{\partial\nu\partial \tau}) \\
%& + \sum_{F\in \mathcal{F}_h^i} 
%\int_F (\frac{\partial^2 (\Delta v)}{\partial\nu^2} \frac{\partial
%w}{\partial\nu} 
%+ \frac{\partial v}{\partial\nu} \frac{\partial^2 (\Delta
%w)}{\partial\nu^2} 
%+ \frac{\sigma_2}{h_F} \frac{\partial v}{\partial\nu}
%\frac{\partial w}{\partial\nu}) \\
%b(v, w) = ~& (b_0v, w).
%\end{aligned}
%$$ 

\bibliographystyle{amsplain}
\bibliography{WX}   % name your BibTeX data base

\end{document}